\def\b#1{\boldsymbol{#1}}
\newcounter{example}[section]
\newcolumntype{P}[1]{>{\centering\arraybackslash}p{#1}}
\theoremstyle{plain}
\newtheorem{theorem}{Theorem}[section]
\newtheorem{lemma}[theorem]{Lemma}
\newtheorem{Problem}[theorem]{Problem}
\theoremstyle{remark}
\newtheorem{remark}[theorem]{Remark}
\begin{document}
	\allowdisplaybreaks[4]
	\numberwithin{figure}{section}
	\numberwithin{table}{section}
	\numberwithin{equation}{section}
	%
	\title[$L^{\infty}-$Quadratic DG FEM for the Unilateral contact problem]
	{Pointwise A posteriori error control of quadratic Discontinuous Galerkin Methods for the unilateral contact problem}
	\author{Rohit Khandelwal}
	\email{rohitkhandelwal004@gmail.com}
\address{Department of Mathematics, Government Science College Chikhli, Navsari, Gujarat - 396521}
	\author{Kamana Porwal}
	\email{kamana@maths.iitd.ac.in}
	\address{Department of Mathematics, Indian Institute of Technology Delhi, New Delhi - 110016}
	\author{Tanvi Wadhawan}
	\email{maz188452@maths.iitd.ac.in}
	\address{Department of Mathematics, Indian Institute of Technology Delhi, New Delhi - 110016}

	\date{}
	\begin{abstract}
    An a posteriori error bound for the  pointwise error of the quadratic discontinuous Galerkin method for the unilateral contact problem on polygonal domain is presented. The pointwise a posteriori error analysis is based on the direct use of a priori estimates of the Green's matrix for the divergence type operators and the suitable construction of the discrete contact force density $\b{\sigma}_h$ and barrier functions for the continuous solution. Several numerical experiments (in two dimension) are presented to illustrate the reliability and efficiency properties of the proposed aposteriori error estimator.
	\end{abstract}
	\keywords{Finite element method,  pointwise a posteriori error estimates, variational inequalities, Signorini problem, Continuous contact force density, supremum norm}
	\subjclass{65N30, 65N15}
	\maketitle
	\allowdisplaybreaks
	\def\R{\mathbb{R}}
	\def\cA{\mathcal{A}}
	\def\cK{\mathcal{K}}
	\def\cN{\mathcal{N}}
	\def\p{\partial}
	\def\O{\Omega}
	\def\bbP{\mathbb{P}}
	\def\cV{\mathcal{V}}
	\def\cM{\mathcal{M}}
	\def\cT{\mathcal{T}}
	\def\cE{\mathcal{E}}
	\def\cF{\mathcal{F}}
	\def \cW{\mathcal{W}}
	\def \cJ{\mathcal{J}}
	\def \cV{\mathcal{V}}
	\def\bF{\mathbb{F}}
	\def \cW{\mathcal{F}}
	\def\bC{\mathbb{C}}
	\def\bN{\mathbb{N}}
	\def\ssT{{\scriptscriptstyle T}}
	\def\HT{{H^2(\O,\cT_h)}}
	\def\mean#1{\left\{\hskip -5pt\left\{#1\right\}\hskip -5pt\right\}}
	\def\jump#1{\left[\hskip -3.5pt\left[#1\right]\hskip -3.5pt\right]}
	\def\smean#1{\{\hskip -3pt\{#1\}\hskip -3pt\}}
	\def\sjump#1{[\hskip -1.5pt[#1]\hskip -1.5pt]}
	\def\jumptwo{\jump{\frac{\p^2 u_h}{\p n^2}}}
	
	\section{Introduction}
\par
\noindent
A popular and efficient technique for approximating solutions of partial differential equations (PDE's) is adaptive finite element method (AFEM).  The crucial step in designing adaptive finite element method is deriving a posteriori error estimators.  These error estimators establish a link between the error and variables that can be calculated using the discrete solution and available data.  We refer to the books \cite{verfurth1996review,ainsworth1997posteriori} for an overview of these methods and their development.
\\ 

\noindent
 Several problems in contact mechanics are modeled as variational inequalities which play an essential role in solving class of various non-linear boundary value problems arising in the physical fields.  In this article,  we consider the scenario of the Signorini problem (frictionless unilateral contact problem in linear elasticity) which is basically studied as a prototype for elliptic variational inequality (EVI) of the first kind.
\vspace{1.5mm}
\par
\noindent
\underline{\textbf{The Unilateral Contact Problem:}}
\vspace{1mm}
\par
\noindent	
Consider a bounded polygonal domain $\Omega \subseteq \mathbb{R}^2$  with boundary $\partial \Omega $,  which represents the reference configuration of an elastic body.  The boundary of $\Omega $ consists of three non-overlapping parts $\Gamma_N$ (the Neumann boundary), $\Gamma_D$ (the Dirichlet boundary) and the contact
boundary $\Gamma_C$.  We assume that
$meas(\Gamma_D) > 0$ and ${\bar\Gamma_C} \cap {\bar\Gamma_D} = \emptyset$ to avoid dealing with the space $H^{\frac{1}{2}}_{00}(\Gamma_C)$.  Additionally we assume that $\Gamma_C$ is a straight line segment.  The outward unit normal vector on $\partial \Omega$ is denoted by $\b{n}$. Initially,  the body is in contact with a rigid foundation on the potential contact boundary.  For the ease of analysis, the body $\Omega$ is clamped on $\Gamma_D$.  Let $\chi : \Gamma_C \rightarrow \mathbb{R}$ be the non-negative gap function s.t. $\chi \in L^{\infty}(\Gamma_C)$.  Further $\Omega$ is subjected to the volume forces $\b{f} \in [L^{\infty}(\Omega)]^2$ and the surface loads $\b{\pi} \in [L^{\infty}(\Gamma_N)]^2$ acts on Neumann boundary.  The unilateral contact problem in linear elasticity under consideration reads: find the displacement vector $\b{u}: \Omega \rightarrow \mathbb{R}^2$ verifying the equations \eqref{problem1}--\eqref{problem3}
\begin{equation} \label{problem1}
\begin{cases}
-\mbox{$div$}~\bm{\Xi}\b{(u)}  = \b{f} \hspace{0.45cm}\mbox{in}~ \Omega, & \\ \hspace{0.7cm} \bm{\Xi(u)n} = \mathbf{\bm{\pi}} \hspace{0.4cm}\mbox{on}~ \Gamma_N, & \\ \hspace{1.6cm}  \b{u} = \bm{0 }~ \quad \mbox{on}~ \Gamma_D, 
\end{cases}
\end{equation}
\par
\noindent
where for any $\bm{A}= (a_{ij}) \in \mathbb{R}^{2 \times 2}$,  the divergence of $\bm{A}$ is defined by
$(div(\bm{A}))_i := \sum\limits_{j=1}^{2} \frac{\partial}{\partial x_{j}} (a_{ij}), ~i=1,2$ and the linearized stress tensor $\bm{\Xi}: \Omega \rightarrow \mathbb{R}^{2 \times 2}$ is given by
\begin{align*}
\bm{\Xi}(\bm{w}):= \mathcal{C}\bm{\epsilon(w)}.
\end{align*}
Here,
\begin{enumerate}
	\item[a)] $\mathcal{C}$ is the fourth order bounded, symmetric and positive definite elasticity tensor.
	\item[b)] $\bm{\epsilon(w)}= \frac{1}{2} (\bm{\nabla w}^T + \bm{\nabla w}) $ is the linearized strain tensor.
\end{enumerate}
\par
\noindent
Due to the homogeneity and isotropy of the elastic body being studied,  it follows that Hooke's law is applicable. Thus the  stress tensor can be represented as 
\begin{align*}
\mathcal{C}\b{\varepsilon}(\b{u}):= 2\mu\b{\varepsilon}(\b{u})+\kappa(\text{tr} \b{\varepsilon}(\b{u}))\textbf{I},
\end{align*}
where,  $\mu>0$ and $\kappa>0$ denote the Lam$\acute{e}$ parameters and $ \textbf{I}$ is an identity matrix of order 2.  
\vspace{0.2 cm}
\par
\noindent
For any displacement field $\b{w}$, we adopt the notation $w_n=\b{w}\cdot \b{n}$ and $\b{w}_t=\b{w}-w_n\b{n}$ respectively,  as its normal and tangential component on the boundary.  Similarly,  for a tensor-valued function ${\b{\varphi}
(\b{w})}$ the normal and tangential component are defined as ${{\varphi}_n(\b{w})}=(\b{{\b{\varphi}(\b{w})n}})\cdot \b{n}$ and  $\b{\varphi}_t(\b{w})=\b{\varphi}(\b{w})\b{n}-{\varphi}_n(\b{w}) \b{n}$,  respectively.   Further,  we have the following decomposition formula
\begin{align*}
  ({\b{\varphi}(\b{w})}\b{n})\cdot\b{w}= {\varphi_n(\b{w})}w_n+ {\b{\varphi}_t(\b{w})}\cdot\b{w}_t.
\end{align*}
The conditions describing the unilateral contact without friction on $\Gamma_C$ are as follows:
\begin{equation} \label{problem3}
\begin{cases}
\hspace{1.9cm} u_n \leq \chi, & \\  \hspace{1.3cm} {\Xi}_n(\b{u}) \leq 0, & \\ (u_{n}-\chi) {\Xi}_n(\b{u}) =0,  & \\ \hspace{1.4cm} \bm{{\Xi}}_{\b{t}}(\b{u}) =0. 
\end{cases}
\end{equation}

A posteriori error estimates in the supremum norm are of great importance in nonlinear problems,  particularly when the solution represents a physical quantity that requires accurate pointwise evaluation.  In the context of the linear elliptic problem,  the reliable and efficient a posteriori error estimates in the energy norm using continuous finite element method has been studied in \cite{verfurth1996review}.  Additionally,  in the article \cite{karakashian2003posteriori},  the authors explore a novel a posteriori error estimates in the energy norm for the class of discontinuous Galerkin (DG) methods for the  second-order linear elliptic problem.
Furthermore,  the articles \cite{nochetto1995pointwise,demlow2012pointwise} analyze a posteriori error estimation techniques in the supremum norm for linear elliptic problems using conforming and DG finite element methods, respectively. In the article \cite{nochetto1995pointwise}, the focus lies on a posteriori error estimates in the supremum norm for the linear elliptic problem in two dimensions,  employing graded meshes to achieve optimal accuracy. Followed by that in  \cite{dari1999maximum}, the earlier approach has been extended to the three dimensional space.  We refer to the articles \cite{weiss2009posteriori,krause2015efficient,gudi2016posteriori,walloth2019reliable}  for the work on the a posteriori error analysis in the energy norm using conforming and DG methods for the Signorini problem.  The convergence analysis of conforming finite element method in the supremum norm for the variational inequalities is discussed in the article \cite{baiocchi1977estimations}.  The pointwise a posteriori error control of linear conforming finite element method for the one body contact problem is dealt in \cite{KP:2021:Signorini}.  
Further,  in the article \cite{KP:2022:Signorini},  pointwise a posteriori error analysis of linear DG methods for the unilateral contact problem has been addressed.  In articles \cite{KP:2022:QuadSignorini} and \cite{KP:2022:QuadLinfSignorini} the authors derived the reliable and efficient a posteriori error analysis using quadratic finite element method for the Signorini problem in the energy norm and supremum norm,  respectively.  Therein, the authors carried out the analysis by defining the continuous Lagrange multiplier as a functional on $\b{H}^{-1}(\O)$.  {Recently,  in \cite{KP:2022:QuadDGSignorini}, the authors provided a rigorous analysis for the DG discretization with quadratic polynomials to Signorini contact problem and perform a priori and a posteriori error analysis in the energy norm.  In contrast to \cite{KP:2022:QuadSignorini},  the authors in the article \cite{KP:2022:QuadDGSignorini} constructed the Lagrange multiplier as functional on $\b{H}^{\frac{1}{2}}(\Gamma_C)$.}
\vspace{0.2 cm}
\par
\noindent
In this article, we derive the reliable and efficient pointwise a posteriori error estimators of quadratic discontinuous Galerkin finite element method for the Signorini contact problem.  The analysis hinges on the introduction of upper and lower barriers of the continuous solution $\bm{u}$, the sign property of quasi-discrete contact force density and the known a priori bounds on the Green's matrix of the divergence type operator.  To the best of the knowledge of authors, 
the analysis developed involves novel residual type aposteriori error estimates of quadratic finite element method in the supremum norm for analyzing the unilateral contact problem. 
\vspace{0.2 cm}
\par
\noindent
The article is structured as follows: Section \ref{sec2} outlines the variational formulation of the unilateral contact problem along with introducing an auxiliary functional $\b{\sigma}$ defined on the space $\b{H}^{-\frac{1}{2}}(\Gamma_C)$ for the exact solution and complimentary conditions pertaining to the contact region.  In Section \ref{sec3} we provide the prerequisite notations  and discuss the discrete version of the continuous problem on a closed, convex,  non-empty subset of quadratic finite element space followed by that in Section \ref{sec4} we  introduce the discrete counterpart of continuous contact force density $\b{\sigma}_h$ on a suitable space and analyse its sign properties.  Later,  we construct the smoothing operator $\b{E}_h$ in order to deal with discontinuous functions.  Section \ref{sec5} provides a unified a posteriori error analysis for various DG methods.  Therein, the reliability and efficiency of a posteriori error estimators has been discussed using the bounds of existing a priori error estimates for the Green's matrix of the divergence type operator and barrier functions for the continuous solution. Finally,  Section \ref{sec6} presents two numerical experiments that confirm the theoretical results.  The convergence behavior of the error estimator as well as refined meshes are shown for two DG methods (SIPG and NIPG).  It is observed that mesh is refined more near the free boundary and thus the transition zone between actual contact and non-contact region is  captured well.
\section{Continuous Problem} \label{sec2}
\noindent
To commence this section,  we introduce the following spaces that shall be relevant in the further analysis.
\begin{itemize}
\item  $L^p(\O)$ refers to the space of Lebesgue measurable functions endowed with the norm
\begin{align*}
\|\psi\|_{L^p(\O)} :=\big(\int\limits_{\O} |\psi(x)|^p~dx \big)^{\frac{1}{p}}\quad\forall~\psi\in L^p(\O).
\end{align*}
\item The Sobolev space $W^{m,p}(\O)$ is the collection of $L^p(\O)$ functions such that the weak derivative upto order $m$ are also in $L^p(\O)$.  The norm $\|\cdot\|_{W^{m,p}(\O)}$ and semi norm $
|\cdot|_{W^{m,p}(\O)}$ on these spaces are defined by
\begin{align*}
\|{\psi} \|_{W^{m,p}(\O)} := \bigg(\underset{0 \leq |\alpha| \leq m}{\sum} \|\partial^{\alpha} {v}\|^p_{L^p(\O)}\bigg)^{\frac{1}{p}}
~\text{and}~   
| \psi |_{W^{m,p}(\O)} := \bigg(\underset{|\alpha| = m}{\sum} \|\partial^{\alpha} {\psi}\|^p_{L^p(\O)}\bigg)^{\frac{1}{p}},
\end{align*}
 respectively.  Here,  
 $\alpha= (\alpha_1, \alpha_2)$ represents the multi index in $\mathbb{N}^2$ and the symbol $\partial^{\alpha}$ refers to the partial derivative of $v$ defined as $\partial^{\alpha}v := \frac{\partial^{|\alpha|}v}{\partial x^{\alpha_1} \partial y^{\alpha_2} }$.

\item The fractional ordered subspace $H^{\frac{
1}{2}}(\gamma)
$ for $\gamma \subseteq \partial \O$ is defined as
 \begin{align*}
  H^{\frac{1}{2}}(\gamma) := \bigg\{{\psi}\in L^{2}(\gamma) ~\big{|}~~\dfrac{{|{\psi}(x)- {\psi}(y)|}}{{| x - y |}} \in L^2(\gamma \times \gamma) \bigg\}
 \end{align*}
 and is equipped with the norm
 \begin{align*}
 \|\psi\|_{{H^{\frac{1}{2}}}(\gamma) }: =\bigg (\| \psi\|^2_{{L^2
  }(\gamma)} +  \int\limits_{\gamma}\int\limits_{\gamma}  \dfrac{|{\psi}(x) - \psi(y) |^2}{{\lvert x - y \rvert}^{2}}~dx dy \bigg)^{\frac{1}{2}}.
 \end{align*}
 \item ${\mathcal{C}^k_c}(D)$ refers to the space of $k$ times continuously differentiable functions with compact support in $D$ where $D \subseteq \bar{\Omega}$.
 \end{itemize}
We refer to the book \cite{Adams200sobolev} for the detailed information on Sobolev spaces.  Next, we present the weak formulation of the unilateral contact problem.  The functional framework well suited to solve Problem (\eqref{problem1}--\eqref{problem3}) consists in working with the subspace $\bm{\cV}$  of $[H^1(\Omega)]^2$ defined as follows
\begin{align*}
\bm{\cV} : = \{\bm{v} \in [H^1(\Omega)]^2~|~ \bm{v}= \bm{0} ~~\text{on}~ \Gamma_D\}.
\end{align*}
Further,  in order to incorporate the non-penetration condition employed on $\Gamma_C$,  we define the non-empty,  closed,  convex set of admissible displacements as
\begin{align*}
\bm{\cK}:=\{\bm{v}\in \bm{\cV}~|~v_n \leq \chi~~ \mbox{on}~~ \Gamma_C \}.
\end{align*}
Throughout this work,  for any Hilbert space/Banach space $X$,  the notation $\bm{X}$ is used to represent the space of vector-valued functions.  Owing to integration by parts and complimentary contact condition \eqref{problem3},  the weak formulation for the unilateral contact problem can be recasted as an elliptic variational inequality of the first kind as follows:
\begin{Problem} \label{problem2}
	Weak Formulation :-
	\par
	\noindent
	\begin{equation}  \label{eq:CVI}
	\begin{cases}
	\text{To find } \bm{u} \in \bm{\cK} \text{ such that} &\\ 
	a(\bm{u},\bm{v}-\bm{u})- B(\bm{v-u})  \geq 0 \quad \forall ~\bm{v}\in \bm{\cK},
	\end{cases}
	\end{equation}
where $a(\cdot,\cdot)$ and $B(\cdot)$ denote the symmetric bilinear form and the linear functional,  respectively and are defined  by  
	\begin{align}
	a(\bm{w},\bm{v})&:= \int\limits_{\Omega} \bm{\Xi}(\bm{w}): \bm{\epsilon}(\bm{v})~dx
	\quad \forall~\bm{w},\bm{v} \in \bm{\mathcal{V}}, \label{def1}\\ 
	B(\bm{v})&:=\int\limits_{\Omega} \bm{f} \cdot \bm{v}~dx + \int\limits_{\Gamma_N} \bm{\pi} \cdot \bm{v}~ds\quad \forall~ \bm{v} \in \bm{\mathcal{V}}. \label{def2}
	\end{align}
\end{Problem}	
\par
\noindent
Here,  for any two matrices $\b{M}=(m_{ij}) \in \mathbb{R}^{2 \times 2}$ and $\b{N} = (n_{ij})  \in \mathbb{R}^{2 \times 2}$,  the notation $``:"$ represents the inner product defined as follows
\begin{align*}
(\b{M}:\b{N})_{ij}:=\sum_{i,j}m_{ij}n_{ij}.~~ 
\end{align*}
Further $(\cdot,\cdot)_{\O}$ and $\langle \cdot,\cdot\rangle_{\Gamma_N}$ denotes $\b{L}^2$ inner product on $\Omega$ and $\Gamma_N$,  respectively.  The unique solvability of variational inequality \eqref{eq:CVI} is a consequence of well known theorem of Lions and Stampacchia \cite{ciarlet2002finite}. 
\vspace{0.3 cm}
\par 
\noindent
For the ease of presentation,  we choose the outward unit normal vector on $\Gamma_C$ to be $\b{e_1}$ where $\b{e_1}$ and $\b{e_2}$ denotes the standard ordered basis of $\mathbb{R}^2$.  Thus,  the admissible set $\b{\mathcal{K}}$ reduces to 
\begin{align*}
\bm{\cK} =\{\bm{v}\in \bm{\cV}~|~v_1 \leq \chi~~ \mbox{on}~~ \Gamma_C \}.
\end{align*} 
Since the contact condition is enforced on the portion of boundary,  therefore the trace of Sobolev spaces will play a crucial role in the subsequent analysis.  Consequently,  we first recall the preliminary results from the trace theory \cite{s1989topics, sobolev2019}. To this end,  we  introduce the trace map $\gamma_0: \bm{W}^{1,q}(\Omega) \rightarrow \bm{L}^q(\Gamma_C),~1 \leq q < \infty$. In particular,  let $\gamma_c: \bm{\mathcal{V}} \longrightarrow {\b{H}^{\frac{1}{2}}(\Gamma_C) }$ be a continuous onto linear trace operator.  Analogously,  we define trace operator for scalar-valued functions  $\tilde{\zeta}_c: H^1(\Omega)\longrightarrow H^{\frac{1}{2}}(\Gamma_C)$.  The surjectivity of trace map $\gamma_c$ \cite{s1989topics} ensures  a continuous right inverse $\hat{\gamma}_c: \b{H}^{\frac{1}{2}}(\Gamma_C) \longrightarrow \b{\cV} $,  i.e.,  $\forall~\b{v} \in \b{H}^{{\frac{1}{2}}}(\Gamma_C)$ we have,  $ (\gamma_c~o~\hat{\gamma}_c)(\b{v}) = \b{v} $ and 
\begin{align*}
\|\hat{\gamma}_c(\b{v})\|_{\b{H}^1(\O)} \leq c_1 \|\b{v}\|_{\b{H}^{{\frac{1}{2}}}(\Gamma_C)}
\end{align*}
where $c_1$ is a positive constant.
\vspace{0.2 cm}
\par
\noindent
We will make a constant use of the dual space of $\b{H}^{\frac{1}{2}}(\Gamma_C)$ denoted by $\b{H}^{-\frac{1}{2}}(\Gamma_C)$ endowed with the norm $\|\cdot\|_{\b{H}^{-\frac{1}{2}}(\Gamma_C)}$ which is defined as
\begin{align*}
\|\bm{L}\|_{\b{H}^{-\frac{1}{2}}(\Gamma_C)} := \underset{\b{w}  \in \b{H}^{\frac{1}{2}}(\Gamma_C),~\b{w}\neq \b{0}}{\text{sup}} \frac{\bm{L}(\b{w})}{~~\|\b{w}\|_{\b{H}^{\frac{1}{2}}(\Gamma_C)}}\quad\forall~ \bm{L} \in {\b{H}^{-\frac{1}{2}}(\Gamma_C)}.
\end{align*}
Let $\langle \cdot,\cdot \rangle_c$ denotes the duality pairing between the space ${\b{H}^{-\frac{1}{2}}(\Gamma_C)}$ and ${\b{H}^{\frac{1}{2}}(\Gamma_C)}$ i.e. $\langle \b{L},\b{w} \rangle_c=\b{L}(\b{w})~\forall~\b{L} \in {\b{H}^{-\frac{1}{2}}(\Gamma_C)}$ and $\b{w}\in{\b{H}^{\frac{1}{2}}(\Gamma_C)}$. Now, we introduce the continuous contact force density $\b{\sigma} \in {\b{H}^{-\frac{1}{2}}(\Gamma_C)}$ which converts the variational inequality \eqref{eq:CVI} into a equation. It is defined as follows
\begin{align} \label{sigma}
\langle \b{\sigma}, \b{w} \rangle_c := B(\hat{\gamma}_c(\b{w}))- a(\b{u},\hat{\gamma}_c(\b{w}))\quad\forall~\b{w}\in {\b{H}^{\frac{1}{2}}}(\Gamma_C).
\end{align}
The following lemma  from the article  \cite{gudi2016posteriori} establish the properties of continuous contact force density.
\begin{lemma} \label{property1}
	Let $\b{u}$ be the solution of the continuous variational inequality \eqref{eq:CVI}. Then,  the following relations hold
\begin{enumerate}[label=(\roman*)]
\item
	$\langle \b{\sigma}, \gamma_c(\b{v})\rangle_c = B(\b{v}) - a(\b{u}, \b{v})\quad \forall~ \b{v}\in\b{\cV}.    $
\item
	$\langle \b{\sigma}, \gamma_c(\b{v})- \gamma_c(\b{u}) \rangle_c \leq 0 \quad \forall~\b{v}\in \b{\cK}.$
	
	\item $\langle \b{\sigma}, \gamma_c(\b{\phi}) \rangle_c \geq 0 \quad \forall~ \b{0} \leq \b{\phi} \in \b{\cV}. $
	\end{enumerate}
\end{lemma}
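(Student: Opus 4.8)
The plan is to obtain all three assertions from the variational inequality \eqref{eq:CVI} by inserting suitable admissible competitors, with part (i) serving as the main tool, from which (ii) and (iii) will follow almost immediately.

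\textbf{Step 1: part (i).} I would fix $\b{v}\in\b{\cV}$, set $\b{w}:=\gamma_c(\b{v})\in\b{H}^{\frac12}(\Gamma_C)$, and consider $\b{z}:=\b{v}-\hat{\gamma}_c(\b{w})$. Since $\hat{\gamma}_c$ maps into $\b{\cV}$ and is a right inverse of $\gamma_c$, this $\b{z}$ lies in $\b{\cV}$ with $\gamma_c(\b{z})=\b{0}$; in particular its first (normal) component vanishes on $\Gamma_C$, so both $\b{u}+\b{z}$ and $\b{u}-\b{z}$ lie in $\b{\cK}$, because $(\b{u}\pm\b{z})_1=u_1\le\chi$ on $\Gamma_C$. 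Testing \eqref{eq:CVI} with $\b{v}=\b{u}+\b{z}$ and with $\b{v}=\b{u}-\b{z}$ and adding the two resulting inequalities will give $a(\b{u},\b{z})-B(\b{z})=0$. Then, by linearity of $B$ and of $a(\b{u},\cdot)$ together with the definition \eqref{sigma},
\[
B(\b{v})-a(\b{u},\b{v})=\big(B(\b{z})-a(\b{u},\b{z})\big)+\big(B(\hat{\gamma}_c(\b{w}))-a(\b{u},\hat{\gamma}_c(\b{w}))\big)=\langle\b{\sigma},\gamma_c(\b{v})\rangle_c,
\]
which is (i) — and, incidentally, this shows the right-hand side of \eqref{sigma} is independent of the chosen lifting.

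\textbf{Step 2: parts (ii) and (iii).} For $\b{v}\in\b{\cK}$ the difference $\b{v}-\b{u}$ lies in $\b{\cV}$, so (i) applied to it gives $\langle\b{\sigma},\gamma_c(\b{v})-\gamma_c(\b{u})\rangle_c=B(\b{v}-\b{u})-a(\b{u},\b{v}-\b{u})=-\big(a(\b{u},\b{v}-\b{u})-B(\b{v}-\b{u})\big)\le 0$ by \eqref{eq:CVI}, which is (ii). For (iii), given $\b{0}\le\b{\phi}\in\b{\cV}$, I would note that $\b{u}-\b{\phi}\in\b{\cK}$, since $(\b{u}-\b{\phi})_1=u_1-\phi_1\le u_1\le\chi$ on $\Gamma_C$; testing \eqref{eq:CVI} with $\b{v}=\b{u}-\b{\phi}$ yields $a(\b{u},-\b{\phi})-B(-\b{\phi})\ge 0$, i.e.\ $B(\b{\phi})-a(\b{u},\b{\phi})\ge 0$, and by (i) the left-hand side is exactly $\langle\b{\sigma},\gamma_c(\b{\phi})\rangle_c$.

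\textbf{Main obstacle.} The only delicate point is the setup in Step 1: one must use precisely that the lifting $\hat{\gamma}_c(\b{w})$ stays in $\b{\cV}$ (so it vanishes on $\Gamma_D$) and that the vanishing of $\gamma_c(\b{z})$ on $\Gamma_C$ is exactly what keeps the perturbed functions $\b{u}\pm\b{z}$ inside the convex constraint set $\b{\cK}$. Everything after that is a direct substitution into \eqref{eq:CVI} and bookkeeping with linearity of $B$ and $a(\b{u},\cdot)$.
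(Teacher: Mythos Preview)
Your argument is correct and is precisely the natural expansion of what the paper indicates: the paper does not give a detailed proof but merely states that the relations are ``a direct consequence of the continuous variational inequality \eqref{eq:CVI} and equation \eqref{sigma}'' (with a reference to \cite{gudi2016posteriori}), and your Steps~1--2 supply exactly those details. In particular, your use of $\b{z}=\b{v}-\hat{\gamma}_c(\gamma_c(\b{v}))$ to show well-definedness/independence of the lifting in (i), and the choices $\b{v}\in\b{\cK}$ and $\b{v}=\b{u}-\b{\phi}$ for (ii) and (iii), match the intended reasoning.
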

\noindent
The relations in Lemma \ref{property1} are a direct consequence of the continuous variational inequality \eqref{eq:CVI} and equation \eqref{sigma}. Next we introduce an intermediate space $\b{\cV_0}$ as 
\begin{align*}
\b{\cV_0}:= \{ \b{w}=(w_1,w_2) \in \b{\cV}, ~\tilde{\zeta}_c(w_1) = 0\}.
\end{align*}
As $\b{v}= \b{u} \pm \b{w}^* \in \b{\cK}~\forall~\b{w}^* \in \b{\cV_0}$, therefore the relation $\eqref{eq:CVI}$ stems down to
\begin{align}\label{prop5}
a(\b{u},\b{w}^*)=B(\b{w}^*)\quad\forall~\b{w}^*\in \b{\cV_0}.
\end{align}
Now,  we collect a key representation for continuous contact force density $\b{\sigma}$ in the next remark.
\begin{remark}\label{rem2}
	For each $\b{v} = (v_1,v_2):= \b{z}_1+\b{z}_2$ where $\b{z}_1=(v_1,0)$ and $\b{z}_2=(0,v_2)$, we can rewrite $\langle \b{\sigma}, \gamma_c(\b{v})\rangle_c = \langle {\sigma}_1, \tilde{\zeta}_c({v_1}) \rangle +\langle {\sigma_2},\tilde{\zeta}_c({v_2}) \rangle$ where
	\begin{align*}
	\langle {\sigma}_1,\tilde{\zeta}_c({v_1}) \rangle &:= B(\b{z}_1) - a (\b{u}, \b{z}_1), \\
	\langle {\sigma}_2, \tilde{\zeta}_c({v_2}) \rangle &:= B(\b{z}_2) - a (\b{u}, \b{z}_2). 
	\end{align*}
Therein,  $\langle \cdot, \cdot \rangle$ denotes the duality pairing between $H^{-\frac{1}{2}}(\Gamma_C)$ and $ H^{\frac{1}{2}}(\Gamma_C).$ 
\vspace{0.4 cm}
\par 
\noindent
A use of relation \eqref{prop5} and the fact that $\b{z_2} \in \b{\cV_0}$,  we obtain 
	\begin{align*}
	\langle {\sigma}_2, \tilde{\zeta}_c({v_2}) \rangle &= 0 \quad\forall~ \b{v}=(v_1,v_2) \in \b{\cV}.
	\end{align*}
\end{remark}
\noindent
Thus the representation of $\b{\sigma}$ in Remark \ref{rem2} stems down to 
\begin{align} \label{prop6}
\langle \b{\sigma}, \gamma_c(\b{v})\rangle_c =  \langle {\sigma}_1, \tilde{\zeta}_c({v_1}) \rangle.
\end{align}
\vspace{0.5 mm}
\par 
\noindent
Let us now emphasize on the support of $\b{\sigma}$ which plays a crucial role in pointwise a posteriori error analysis.  For this purpose,  we will introduce the following non-contact set as follows
\begin{align*}
\mathcal{H}:= \{ x \in \Gamma_C : u_1(x) < \chi(x) \}.
\end{align*}
\begin{lemma} \label{sign3}
	It holds that
	\begin{align} \label{support}
	supp(\sigma_1 ) \subset \Gamma_C \setminus \mathcal{H}.
	\end{align}
\end{lemma}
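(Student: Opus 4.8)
The goal is to verify that $\sigma_1$ annihilates every test function supported in the relatively open non-contact set $\mathcal{H}$; by the definition of the support of an element of $\b{H}^{-\frac12}(\Gamma_C)$, this is exactly the inclusion \eqref{support}. The plan is therefore to fix an arbitrary $\phi\in\mathcal{C}^{\infty}_c(\mathcal{H})$, regard $(\phi,0)$ as an element of $\b{H}^{\frac12}(\Gamma_C)$ (extending $\phi$ by zero outside its support), and set $\b{w}_\phi:=\hat{\gamma}_c\big((\phi,0)\big)\in\b{\cV}$, so that by the right-inverse property $\gamma_c(\b{w}_\phi)=(\phi,0)$ on $\Gamma_C$ and hence $\tilde{\zeta}_c\big((\b{w}_\phi)_1\big)=\phi$.

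First I would produce the admissible variations. Since $\operatorname{supp}\phi$ is a compact subset of $\mathcal{H}$ and $\chi-u_1>0$ on $\mathcal{H}$, there is a constant $c_\phi>0$ with $\chi-u_1\ge c_\phi$ on $\operatorname{supp}\phi$; consequently, for every $t\in\mathbb{R}$ with $|t|\le c_\phi/\big(1+\|\phi\|_{L^{\infty}(\Gamma_C)}\big)$ one has $u_1\pm t\phi\le\chi$ on all of $\Gamma_C$ --- trivially off $\operatorname{supp}\phi$, where $\phi=0$, and by the uniform gap on $\operatorname{supp}\phi$. Recalling that (with the outward normal on $\Gamma_C$ chosen to be $\b{e_1}$) the first trace component of $\b{u}\pm t\,\b{w}_\phi$ is $u_1\pm t\phi$, this shows $\b{u}\pm t\,\b{w}_\phi\in\b{\cK}$ for all such $t$.

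Next I would insert $\b{v}=\b{u}\pm t\,\b{w}_\phi$ into Lemma \ref{property1}(ii). Using $\gamma_c(\b{v})-\gamma_c(\b{u})=\pm t\,(\phi,0)$ together with \eqref{prop6} and Remark \ref{rem2}, the pairing collapses to $\langle\b{\sigma},\gamma_c(\b{v})-\gamma_c(\b{u})\rangle_c=\pm t\,\langle\sigma_1,\phi\rangle$, so Lemma \ref{property1}(ii) yields $\pm t\,\langle\sigma_1,\phi\rangle\le 0$ for both signs and hence $\langle\sigma_1,\phi\rangle=0$. As $\phi\in\mathcal{C}^{\infty}_c(\mathcal{H})$ was arbitrary, $\sigma_1$ vanishes on $\mathcal{H}$ and \eqref{support} follows. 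The main obstacle is the first step, i.e.\ the construction $\b{u}\pm t\,\b{w}_\phi\in\b{\cK}$: it hinges on the \emph{uniform} positivity of the gap $\chi-u_1$ on the compact set $\operatorname{supp}\phi$, which relies on the continuity of the trace $u_1$ on $\Gamma_C$ (an a priori regularity property of the Signorini solution) so that $\mathcal{H}$ is relatively open and the strict inequality $u_1<\chi$ on $\mathcal{H}$ upgrades to a positive lower bound on every compact subset. Once this is available the remainder is a one-line application of the sign relation in Lemma \ref{property1}(ii); alternatively, one may first observe from Lemma \ref{property1}(iii) that $\sigma_1$ is a nonnegative Radon measure on $\Gamma_C$ and then show, by the same test-function computation, that it charges no compact subset of $\mathcal{H}$.
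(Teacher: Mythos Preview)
Your proposal is correct and follows essentially the same route as the paper: lift a test function with compact support in $\mathcal{H}$ to an element of $\b{\cV}$, show that $\b{u}\pm\epsilon(\text{lift})\in\b{\cK}$ for small $\epsilon$, and conclude $\langle\sigma_1,\phi\rangle=0$ from the variational inequality (equivalently, Lemma~\ref{property1}(ii)). Your treatment of the uniform $\epsilon$ via compactness of $\operatorname{supp}\phi$ is in fact more carefully stated than the paper's pointwise case distinction, and you rightly flag that this step implicitly uses continuity of the trace $u_1$ on $\Gamma_C$.
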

	\begin{proof}
{Let $\b{\phi}=(\phi_1, \phi_2) \in \b{\mathcal{C}^\infty_c}(\mathcal{H})$ be an arbitrary function.  Extending it by $\b{0}$ outside on whole $\partial \Omega $ yields $\b{\phi} \in \b{\mathcal{C}_c}(\partial \Omega )$.  A use of trace map guarantees the existence of a function $\b{\tilde{\phi}} \in \b{H^1}(\O)$ such that $\gamma(\b{\tilde{\phi}} )=\b{\phi}$, where $\gamma : \b{H}^1(\O) \longrightarrow \b{H}^{\frac{1}{2}}(\partial \Omega)$ is the trace map \cite{kesavan1989topics}. Next,  we verify that for any $0 < \epsilon \leq \epsilon_0 $, where $ \epsilon_0$ is a fixed number,  we have $\b{v} = \b{u} \pm \epsilon \b{\tilde{\phi}} \in  \b{\mathcal{K}}$.  } 
\vspace{0.2 cm}
\par
\noindent
{ It is evident that $\b{u} \pm \epsilon \b{\tilde{\phi}} \in  \b{\mathcal{V}} $. 
Therefore,  it suffices to show $u_1(x^{*}) \pm \epsilon {\phi}_1(x^{*}) \leq \chi(x^{*})$ for any $x^{*} \in \Gamma_C$.  To establish its validity,  we will examine two cases 
\noindent
\begin{enumerate}
\item If ${\phi}_1(x^*) >0$, then for $0 < \epsilon \leq \dfrac{\chi(x^*)-u_1(x^*)}{{\phi}_1(x^*)}$, we have $u_1(x^*) \pm \epsilon{\phi}_1(x^*) \leq \chi(x^*)$.  \\

\item If ${\phi}_1(x^*) < 0$, then for $0 < \epsilon \leq \dfrac{u_1(x^*)-\chi(x^*)}{{\phi}_1(x^*)}$, we have $u_1(x^*) \pm \epsilon{\phi}_1(x^*) \leq \chi(x^*)$.
\end{enumerate}
Thus we conclude $\b{v}= \b{u} \pm \epsilon \b{\tilde{\phi}} \in  \b{\mathcal{K}}$.  Using continuous variational inequality \eqref{eq:CVI}, we obtain
\begin{align*}
B(\b{\tilde{\phi}}) - a(\b{u}, \b{\tilde{\phi}})=0.
\end{align*}
Finally utilizing Lemma \ref{property1},  we infer $$\langle \b{\sigma}, \gamma_c(\b{\tilde{\phi}})\rangle_c = \langle \b{\sigma}, \b{{\phi}}\rangle_c = 0.$$This completes the proof.}

	\end{proof}
\section{Quadratic Discontinuous Galerkin FEM} \label{sec3}
\par
\noindent
To approximate Problem \ref{eq:CVI},  we first recall some preliminary results and notations which will be used in the further analysis.  Since $\Omega$ is a bounded polygonal domain,  thus it can be superimposed by rectilinear finite elements.  For a  given mesh parameter $h>0$,  let $\cT_h$ be the partition of $\O$ into regular triangles \cite{ciarlet2002finite} such that
\begin{align*}
\bar{\O}=\underset{T \in \cT_h}{ \bigcup}T.
\end{align*}
In the forthcoming analysis, let $\delta_{ij}~\text{denotes the Kronecker's Delta defined by,}~\delta_{ij}=1~\text{if}~i=j~\text{and}~0$ {otherwise.} Further,  the notation $X \lesssim Y$ is used to represent $X \leq CY$ where $C$ is a positive generic constant independent of the mesh parameter $h$. 
\vspace{0.5cm}
\par
\noindent
We now employ the quadratic discontinuous finite element space for the discrete approximation of the continuous space $\bm{\cV}$ in the following way
\begin{align*}
\b{\mathcal{V}_h}:=\big\{\b{v}_h \in \b{L}^2(\Omega)~|~\b{v}_h|_T \in [\mathbb{P}_2(T)]^2 \quad\forall~ T \in \cT_h \big \}.
\end{align*} 
Therein,  for any $T \in\cT_h$ and $0 \leq r \in \mathbb{Z},~\mathbb{P}_r(T)$ refers to the space of  polynomials of degree at most $r$.  Associated with 
triangulation $\cT_h$,  let $\mathcal{T}_p$ be the union of all elements sharing the node $p$ and $h_p:= \text{diam } \mathcal{T}_p$.  Further, $\mathcal{T}^C_h$ represents the set of triangles sharing the edge with contact boundary. We denote $\cF_h$ to be the set of all edges of $\cT_h$.  For any edge $e \in \cF_h, $ let $h_e$ denotes the diameter of edge $e$.  We define
\begin{align*}
\cF_h^{int}&:=\{ e \in \cF_h: e \subset \Omega \},\\
\cF_h^{b}&:=\{ e \in \cF_h: e \subset \partial\Omega \},\\
\cF_h^{D}&:=\{ e \in \cF_h: e \subset \Gamma_D \}, \\
\cF_h^{	N}&:=\{ e \in \cF_h: e \subset \Gamma_N \},\\
\cF_h^{C}&:=\{ e \in \cF_h: e \subset \Gamma_C \},\\
\cF_h^{0}&:=\cF_h^{int} \cup \cF_h^{D}.
\end{align*}
The set of all vertices of the triangulation $\cT_h$ is denoted by $\cN_h$.  Based on the decomposition of the boundary $\partial \Omega$ into $\Gamma_D, \Gamma_N$ and $\Gamma_C$, we distribute the vertices of the  triangulation $\cT_h$ as follows: $\cN_h^{int}$ is the set of all interior vertices, $\cN_h^D$ is the set of vertices on $\overline{\Gamma}_D$, $\cN_h^N$ is the collection of vertices on $\Gamma_N$ and $\cN_h^C$ is the set of vertices lying on $\overline{\Gamma}_C$.  Let us denote $\cN^T_h$ as the set of all vertices of the element $T$ and $\cN^e_h$ denotes the set of vertices on edge $e \in \mathcal{F}_h$.   Similarly,  we distribute the set of all midpoints of edges $\cM_h$ of $\cT_h$ as 
\vspace{0.1cm}
\begin{align*}
~~~~\cM_h^{int} &:=\text{set of all midpoints of edges lying in}~\O,\\
\cM_h^D&:= \text{set of all midpoints of edges lying on}~ \Gamma_D, \\
\cM_h^N&:= \text{set of all midpoints  of edges lying on}~ \Gamma_N, \\
\cM_h^C&:=\text{set of all midpoints  of edges lying on} ~\Gamma_C. 
\end{align*}
In addition,  the notation $\cM^e_h$ refers to the midpoint of the edge $e \in \cF_h$ and $\cM^T_h$ denotes the set of all midpoints of edges of the element $T$.  For the sake of ease in presentation,  we assume that each element $T \in \mathcal{T}^C_h$ has exactly one potential contact boundary edge.
\vspace{0.3 cm}
\par
\noindent
In the sequel,  we shall define jumps and averages of non-smooth functions across the interfaces.  For that,  we define broken Sobolev space 
\begin{align*}
{H^1}(\O, \mathcal{T}_h):= \{ {v} \in {L^2}(\O)|~ {v}_T:={v}|_{T} \in {H^1}(T) \quad\forall~T \in \mathcal{T}_h \}.
\end{align*}
Let $e \in \cF^{int}_h$ be an interior edge shared by two neighboring elements $T_1$ and $T_2$.  Further,  let ${\bm{n}_{T_1}}$ be the outward unit normal vector on edge $e$ pointing from $T_1$ to $T_2$ and  ${\bm{n}
_{T_2}}= - {\bm{n}_{T_1}}$.  We define jump $\sjump{\cdot}$ and mean $\smean{\cdot}$ on edge $e$ as follows 
\begin{itemize}
\item \textit{For a scalar-valued function ${w} \in {H^1}(\O, \mathcal{T}_h)$, define }
\begin{align*}
\sjump{w} :={w}|_{T_1}{\bm{n}_{T_1}}+{w}|_{T_2}{\bm{n}_{T_2}},\hspace {1.2 cm}
\smean{w} :=  \frac{{w}|_{T_1}+ {w}|_{T_2}}{2}.
\end{align*}
\item \textit{For a vector-valued function $\b{v} \in [{H^1}(\O, \mathcal{T}_h)]^{2}$, define }
\begin{align*}
\hspace{-0.4 cm}\sjump{\b{v}} :=\b{v}|_{T_1}\otimes{\bm{n}_{T_1}}+\b{v}|_{T_2}\otimes{\bm{n}_{T_2}},~~~\hspace {0.4 cm}
\smean{\b{v}} :=  \frac{\bm{v}|_{T_1}+ \bm{v}|_{T_2}}{2}.
\end{align*}
where the dyadic product of two vectors $\b{a}=(a_1,a_2)$ and  $\b{b}=(b_1,b_2)$ is defined as $(\bm{a} \otimes \bm{b})_{ij}:=a_i b_j,~1 \leq i,j \leq 2$.
\vspace{0.2 cm}
\item \textit{For a tensor-valued function $\b{\phi} \in [{H^1}(\O, \mathcal{T}_h)]^{2\times 2}$, define }
\begin{align*}
\sjump{\b{\phi}} :=\b{\phi}|_{T_1}{\bm{n}_{T_1}}+\b{\phi}|_{T_2}{\bm{n}_{T_2}},\hspace {1.2 cm}
\smean{\b{\phi}} :=  \frac{\bm{\phi}|_{T_1}+ \bm{\phi}|_{T_2}}{2}.
\end{align*}
\end{itemize}
Analogously,  for the sake of notational convenience we define jump $\sjump{\hspace{0.05 cm}\cdot\hspace{0.05 cm}}$ and mean $\smean{\cdot}$ on boundary edges also.  For any $e \in \cF^b_h,$  there exists an element $T \in \cT_h$ such that $e \in \partial T~ \cap~ \partial \Omega$.  We set jump $\sjump{\hspace{0.05 cm}\cdot\hspace{0.05 cm}}$ and  average $\smean{\cdot}$ on edge $e$ as follows 
\begin{itemize}
\item \textit{For a scalar-valued function $w \in {H^1}(\O, \mathcal{T}_h)$,  we set}
$\sjump{w} := w{\bm{n_e}}~\text{and}~\smean{w} := w$.
\item \textit{For a vector-valued function $\b{v} \in [{H^1}(\O, \mathcal{T}_h)]^{2}$,  we set}
$\sjump{\b{v}} :=\b{v}\otimes{\bm{n_e}}~\text{and}~
\smean{\b{v}} :=  \bm{v}.$
\item \textit{For a tensor-valued function $\b{\phi} \in [{H^1}(\O, \mathcal{T}_h)]^{2\times 2}$,  we set }
$\sjump{\b{\phi}} :=\b{\phi}{\bm{n_e}}~\text{and}~
\smean{\b{\phi}} :=  \bm{\phi}$.
\end{itemize}
\vspace{0.2 cm}
\par 
\noindent
Here,  $\b{n_e}$ is the outward unit normal attributed to edge $e$. In addition,  for a  given function $\bm{v}$, we define $\bm{v}^+:= \text{max}\{\bm{v}, \bm{0}\}$ to be non-negative part of $\bm{v}$.  Furthermore,  for any $\bm{v} \in \bm{\cV_h}$ and $T \in \cT_h$,  define $\bm{\epsilon}_h(\bm{v})$ as $\bm{\epsilon}_h(\bm{v})|_T= \bm{\epsilon}(\bm{v})$ on $T$ and $\bm{\Xi}_h(\bm{v})= 2 \mu \bm{\epsilon}_h(\bm{v}) + \kappa(tr (\bm{\epsilon}_h(\bm{v}))  \textbf{I}$.  
\begin{remark}
The definitions of jumps and averages defined in the preceding paragraph are also applicable to functions in $\b
{W}^{1,p}(\O),~1 \leq p < 2$. 
\end{remark}
\vspace{0.2 cm}
\par
\noindent
Next we introduce the discrete analogue $\b{\mathcal{K}_h}$  of the set $\b{\cK}$ in the following way
\begin{align*}
\b{\mathcal{K}_h} : = \bigg\{ \b{v}_h:=(v_{h,1}, v_{h,2}) \in \b{\mathcal{V}_h}~|~ \int\limits_e v_{h,1}~ds \leq \int\limits_e \chi~ds \quad\forall~e \in \cF_h^C \bigg \}.
\end{align*}
\par
\noindent
It can be realised that in the discrete set $\b{\cK_h}$, the non-penetration condition is incorporated in the form of integral constraints.  
The discrete formulation associated to Problem  \ref{problem2} reads as follows:
\begin{Problem}
	Discrete variational inequality :-
	\par
	\noindent
	\begin{equation}  \label{eq:DVI}
	\begin{cases}
	\text{To find } \bm{u}_h \in \bm{\cK_h} \text{ such that} &\\ 
	M_{DG}(\bm{u}_h,\bm{v}_h-\bm{u}_h) -B(\bm{v}_h-\bm{u}_h) \geq 0 \quad \forall~ \bm{v}_h \in \bm{\cK_h},
	\end{cases}
	\end{equation}
\end{Problem}
\noindent
where the mesh dependent bilinear form $M_{DG}(\cdot, \cdot)$ defined on $\b{\cV_h} \times \b{\cV_h} $ is given by
	 $$M_{DG}(\bm{u}_h,\bm{v}_h):=a_h(\bm{u}_h,\bm{v}_h)+L_{DG}(\bm{u}_h,\bm{v}_h)$$ 
with $a_h(\bm{u}_h,\bm{v}_h):= \sum\limits_{ T \in \cT_h}\int\limits_{T} \bm{\Xi}(\bm{u}_h): \bm{\epsilon}(\bm{v}_h)~dx$ and
	 $L_{DG}(\bm{u}_h,\bm{v}_h)$ consists of consistency and stability terms \cite{wang2011discontinuous,gudi2016posteriori,walloth2019reliable}.
\vspace{0.5 cm}
\par
\noindent
\textit{\textbf{Examples of DG Methods}}
\vspace{0.2 cm}
\par
\noindent
In this article,  we have performed the analysis that is applicable to several DG formulations that have been enumerated in \cite{wang2011discontinuous}.  However,  for the sake of simplicity we have chosen to focus the forthcoming analysis on three specific methods: SIPG,  NIPG and IIPG.  While the results are relevant to other DG formulations,  we believe that analyzing these three methods in detail will provide meaningful insights that can be generalized to other DG methods as well.  To this end,  the different choices for $L_{DG}(\cdot,\cdot)$ are listed as follows:
\bigbreak
	\begin{itemize} 
		\item \textit{ \textbf{SIPG method}} \cite{ciarlet2002finite,wang2011discontinuous}:
		\begin{align*}
		\hspace{2mm}
		L_{DG}(\b{u}_h,\b{v}_h) &=  - \int \limits_{e \in \mathcal{F}^0_h } \sjump{\b{u}_h}:\smean{\b{\bm{\Xi}}_h(\b{v}_h)} ~ds - \int \limits_{e \in \mathcal{F}^0_h } \sjump{\b{v}_h}:\smean{{\bm{\Xi}}_h(\b{u}_h)}~ ds \\
		& \hspace{0.2cm} + \int \limits_{e \in \cF_h^0}\eta h_e^{-1}\sjump{\b{u}_h}:\sjump{\b{v}_h}~ds, \quad 	
		\end{align*}
$\text{for } \b{u}_h, \b{v}_h \in 	\b{\mathcal{V}_h} \text{ and } \eta > \eta_0>0.$
		\bigbreak
		\item \textit{ \textbf{NIPG method}} \cite{ciarlet2002finite,wang2011discontinuous}:
		\begin{align*}
		\hspace{2mm}
		L_{DG}(\b{u}_h,\b{v}_h) &= \int \limits_{e \in \mathcal{F}^0_h } \sjump{\b{u}_h}:\smean{\b{\bm{\Xi}}_h(\b{v}_h)}  ~ds - \int \limits_{e \in \mathcal{F}^0_h } \sjump{\b{v}_h}:\smean{\b{\bm{\Xi}}_h(\b{u}_h)} ~ds \\
		& + \int \limits_{e \in \cF_h^0}\eta h_e^{-1}\sjump{\b{u}_h}:\sjump{\b{v}_h}~ds, \quad 
		\end{align*}
for $ \b{u}_h$, $\b{v}_h \in 	\b{\mathcal{V}_h} \text{ and } \eta >0.$
		\bigbreak
		\item \textit{ \textbf{IIPG method}} \cite{ciarlet2002finite,wang2011discontinuous}:
		\begin{align*}
		\hspace{2mm}
		L_{DG}(\b{u}_h,\b{v}_h) =  - \int \limits_{e \in \mathcal{F}^0_h } \sjump{\b{v}_h}:\smean{\b{\bm{\Xi}}_h(\b{u}_h)} ~ds + \int \limits_{e \in \cF_h^0}\eta h_e^{-1}\sjump{\b{u}_h}:\sjump{\b{v}_h}~ds,
		\end{align*}
		for $\b{u}_h, \b{v}_h \in 	\b{\mathcal{V}_h}$ and $\eta > 0.$
\end{itemize}

\noindent
We end this section by recalling the following technical results that will be required in the subsequent analysis \cite{brenner2007mathematical}.
\begin{lemma}(Inverse inequalities) \label{Lemama}
	Let $1 \leq s,t \leq \infty$ and $\bm{v}_h \in \bm{\cV_h}$. Then, it holds that
	\begin{enumerate}
		\item $	\|\bm{v}_h\|_{\bm{W}^{m,s}(T)} \lesssim h^{l-m}_T h_T^{2(\frac{1}{s}-\frac{1}{t})} \|\bm{v}_h\|_{\bm{W}^{l,t}(T)} \quad \forall~T~\in \cT_h$, $l \leq m$,
		\item $	\|\bm{v}_h\|_{\bm{L}^{\infty}(T)} \lesssim h^{-1}_T \|\bm{v}_h\|_{\bm{L}^2(T)} \quad \forall~ T \in \cT_h$,
		\item $	\|\bm{v}_h\|_{\bm{L}^{\infty}(e)} \lesssim h^{-\frac{1}{2}}_e \|\bm{v}_h\|_{\bm{L}^2(e)} \quad \forall~ e \in \cF_h$.
	\end{enumerate}	
\end{lemma}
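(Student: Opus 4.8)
The plan is to establish all three inverse inequalities by the classical affine--scaling argument, reducing each estimate to the equivalence of norms on a finite--dimensional polynomial space over a fixed reference element. First I would note that, since every norm in the statement is computed componentwise, it is enough to prove each bound for a scalar polynomial $v_h|_T \in \mathbb{P}_2(T)$ (resp. $v_h|_e \in \mathbb{P}_2(e)$); the vector-valued case follows by summing the two component contributions.

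For part (1), I would fix $T \in \cT_h$, choose a fixed reference triangle $\hat T$, and let $F_T(\hat x) = B_T\hat x + b_T$ be the affine bijection of $\hat T$ onto $T$. Shape--regularity of $\{\cT_h\}$ supplies the scaling bounds $\|B_T\| \lesssim h_T$, $\|B_T^{-1}\| \lesssim h_T^{-1}$ and $|\det B_T| \sim h_T^2$, with constants independent of $T$ and $h$. Writing $\hat v := v_h \circ F_T \in \mathbb{P}_2(\hat T)$, I would apply the standard transformation estimates for Sobolev seminorms under affine maps \cite{ciarlet2002finite,brenner2007mathematical}: passing from $\hat T$ to $T$ in the $W^{k,s}$ seminorm introduces a factor $\|B_T^{-1}\|^{k}|\det B_T|^{1/s}$, and passing from $T$ to $\hat T$ in the $W^{k,t}$ seminorm introduces $\|B_T\|^{k}|\det B_T|^{-1/t}$. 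On the fixed domain $\hat T$ I would then invoke the equivalence of all norms on the finite--dimensional space $\mathbb{P}_2(\hat T)$ (more precisely, on the quotient $\mathbb{P}_2(\hat T)/\mathbb{P}_{l-1}(\hat T)$ when $l \ge 1$) to bound a $W^{m,s}$ quantity by a $W^{l,t}$ quantity with a constant depending only on $\hat T$ and the polynomial degree $2$. Collecting the powers of $h_T$ — $h_T^{-m}$ and $h_T^{l}$ from the two derivative scalings and $h_T^{2(1/s-1/t)}$ from $|\det B_T|^{1/s-1/t}$, with the convention $1/\infty = 0$ — and checking that the finitely many lower--order contributions in $\|v_h\|_{W^{m,s}(T)}$ are likewise controlled by $h_T^{l-m+2(1/s-1/t)}\|v_h\|_{W^{l,t}(T)}$, I would arrive at (1). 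Part (2) is then the special case $m = l = 0$, $s = \infty$, $t = 2$, in which the derivative scalings are absent and $|\det B_T|^{-1/2} \sim h_T^{-1}$.

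For part (3) I would repeat the argument on a fixed reference edge $\hat e$ with affine parametrization $e \cong \hat e$: since $v_h|_e$ has degree at most $2$, one--dimensional norm equivalence gives $\|v_h\|_{L^\infty(e)} = \|\hat v\|_{L^\infty(\hat e)} \lesssim \|\hat v\|_{L^2(\hat e)}$, and the one--dimensional Jacobian scaling $\|\hat v\|_{L^2(\hat e)} \sim h_e^{-1/2}\|v_h\|_{L^2(e)}$ (from $|e| \sim h_e$) produces the factor $h_e^{-1/2}$. I do not anticipate a genuine obstacle: the content is entirely routine, and the only point deserving attention is the careful bookkeeping of the powers of $h_T$ (resp. $h_e$) across the admissible ranges $l \le m$ and $1 \le s,t \le \infty$, together with the uniformity of the implied constants — which holds because the norm--equivalence constants depend only on the reference element and the degree $2$, while shape--regularity of $\cT_h$ makes the growth of $\|B_T\|$, $\|B_T^{-1}\|$ and $|\det B_T|$ with $h_T$ uniform in $T$.
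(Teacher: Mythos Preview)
Your argument is correct and is exactly the standard affine--scaling proof from \cite{brenner2007mathematical,ciarlet2002finite}; the paper itself does not give a proof but simply cites \cite{brenner2007mathematical} for this lemma, so there is nothing further to compare.
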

\noindent
\begin{lemma}(Trace inequality) \label{Lemmmma}
	Let $p \in [1, \infty)$ and $ e\in \partial T$ where $T \in \cT_h$. Then,  for any $\bm{\phi} \in \bm{W}^{1,p}(T),$ the following holds 
	\begin{align*}
	\|\bm{\phi}\|^p_{\bm{L}^p(e)} \lesssim h_e^{-1} \Big( \|\bm{\phi}\|^p_{\bm{L}^p(T)} + h_e^p \|\bm{\nabla} \bm{\phi}\|^p_{\bm{L}^p(T)} \Big).
	\end{align*}
\end{lemma}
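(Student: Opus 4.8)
The estimate is a classical scaled trace inequality, so the plan is the standard reference-element scaling argument, applied componentwise to the vector field $\bm{\phi}$ (it suffices to treat a scalar function). First I would fix a reference triangle $\hat{T}$ with a distinguished edge $\hat{e} \subset \partial \hat{T}$ and the affine diffeomorphism $F_T : \hat{T} \to T$, $F_T(\hat{x}) = B_T \hat{x} + b_T$, carrying $\hat{T}$ onto $T$ and $\hat{e}$ onto $e$, and pull $\bm{\phi}$ back to $\hat{\bm{\phi}} := \bm{\phi}\circ F_T$. The first substantive step is the trace inequality on the fixed reference configuration,
\begin{align*}
\|\hat{\bm{\phi}}\|^p_{\bm{L}^p(\hat{e})} \lesssim \|\hat{\bm{\phi}}\|^p_{\bm{L}^p(\hat{T})} + \|\bm{\nabla}\hat{\bm{\phi}}\|^p_{\bm{L}^p(\hat{T})},
\end{align*}
with a constant depending only on $\hat{T}$ and $p$. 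For $p \in [1,\infty)$ this is just continuity of the trace operator $W^{1,p}(\hat{T}) \to L^p(\hat{e})$; to keep the endpoint $p = 1$ in reach I would prove it directly by choosing a fixed vector field $\hat{\bm{v}}$ with $\hat{\bm{v}}\cdot\hat{\bm{n}} \geq c_0 > 0$ on $\hat{e}$, applying the divergence theorem to $\text{div}(|\hat{\bm{\phi}}|^p \hat{\bm{v}})$ for smooth $\hat{\bm{\phi}}$, and then passing to general $\hat{\bm{\phi}} \in W^{1,p}(\hat{T})$ by density.

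Next I would collect the change-of-variables identities relating norms on $(T,e)$ to norms on $(\hat{T},\hat{e})$: $\|\bm{\phi}\|^p_{\bm{L}^p(T)} = |\det B_T|\,\|\hat{\bm{\phi}}\|^p_{\bm{L}^p(\hat{T})}$, $\|\bm{\phi}\|^p_{\bm{L}^p(e)} = \tfrac{|e|}{|\hat{e}|}\,\|\hat{\bm{\phi}}\|^p_{\bm{L}^p(\hat{e})}$, and, since $\bm{\nabla}\hat{\bm{\phi}} = B_T^{\ssT}\,(\bm{\nabla}\bm{\phi})\circ F_T$, the bound $\|\bm{\nabla}\hat{\bm{\phi}}\|^p_{\bm{L}^p(\hat{T})} \leq |\det B_T|^{-1}\|B_T\|^p\,\|\bm{\nabla}\bm{\phi}\|^p_{\bm{L}^p(T)}$. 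Shape regularity of $\cT_h$ then gives the scaling relations $|\det B_T| \sim h_T^2$, $\|B_T\| \sim h_T$, $|e| \sim h_e$ and $h_e \sim h_T$, with constants depending only on the shape-regularity parameter. Substituting these identities into the reference inequality and multiplying through by $|e|/|\hat{e}| \sim h_e$ collapses the exponents and yields $\|\bm{\phi}\|^p_{\bm{L}^p(e)} \lesssim h_e^{-1}\big(\|\bm{\phi}\|^p_{\bm{L}^p(T)} + h_e^p\,\|\bm{\nabla}\bm{\phi}\|^p_{\bm{L}^p(T)}\big)$, which is the claim.

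Since the result is classical, I do not expect a genuine analytic obstacle; the argument is essentially careful bookkeeping of Jacobian factors. The only points that deserve attention are the reference-element trace bound at the endpoint $p = 1$ (where Hilbert-space machinery is unavailable and one must use the divergence-theorem identity plus a density argument) and making sure every hidden constant in the scaling relations is controlled uniformly over $\cT_h$ by a single shape-regularity constant, so that the final $\lesssim$ is independent of the mesh parameter $h$.
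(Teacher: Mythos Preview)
Your scaling argument via a reference element is correct and is exactly the standard proof of this scaled trace inequality; the paper itself does not give a proof but simply cites \cite{brenner2007mathematical}, where the same reference-element argument is carried out. There is nothing to add.
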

\section{ Discrete Lagrange Multiplier}\label{sec4}
\noindent
This section introduces the discrete counterpart of continuous contact force density $\b{\sigma}$.  For that,  we introduce the space $$\b{\mathcal{W}_h}:= \{\b{w}_h \in \b{L}^2(\Gamma_C)~|~\b{w}_h|_e \in [\mathbb{P}_0(e)]^2 \quad\forall~ e \in \cF_h^C\}$$ and the map $\Pi_h: \b{L}^2(\Gamma_C) \rightarrow \b{\mathcal{W}_h} $, which is defined by
\begin{align*}
 \Pi_h(\b{w})|_e:= \frac{1}{h_e} \int\limits_e \b{w}~ds \quad \forall~\b{w} \in \b{L}^2(\Gamma_C)~\text{and}~ e \in \cF_h^C.
\end{align*}
Using the interpolation map $ \Pi_h$,  we define the map $\b{\phi}_h: \b{\mathcal{V}_h} \rightarrow \b{\mathcal{W}_h}$ by $\b{\phi}_h(\b{v}_h):= \Pi_h(\Gamma_h(\b{v}_h))$, where $\Gamma_h: \b{\mathcal{V}_h} \rightarrow \b{\mathcal{Z}_h}:= \{\b{z}_h \in \b{L}^2(\Gamma_C)~|~\b{z}_h|_e \in [\mathbb{P}_2 (e)]^2 \quad\forall~e \in \cF_h^C\}$ denotes the trace map.
\vspace{.5 cm}
\par
\noindent
Now,  let us turn to the lemmas essential for the further analysis.
\begin{lemma} \label{prop1}
	The map $\b{\phi}_h :\b{\mathcal{V}_h} \rightarrow \b{\mathcal{W}_h}$ defined by
	\begin{align*}
	\b{\phi}_h(\b{v}_h):= \Pi_h(\Gamma_h(\b{v}_h))
	\end{align*}
	is surjective.
\end{lemma}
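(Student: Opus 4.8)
The plan is to prove surjectivity of $\b{\phi}_h = \Pi_h \circ \Gamma_h$ by exhibiting, for an arbitrary target $\b{w}_h \in \b{\mathcal{W}_h}$, an explicit preimage $\b{v}_h \in \b{\mathcal{V}_h}$ with $\b{\phi}_h(\b{v}_h) = \b{w}_h$. Since $\b{\mathcal{W}_h}$ consists of piecewise constants on the contact edges $\cF_h^C$, it suffices to produce, edge by edge, a quadratic vector field whose average over each $e \in \cF_h^C$ equals the prescribed constant value $\b{w}_h|_e$, and which vanishes sufficiently far from $\Gamma_C$ so that the local constructions do not interfere.

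First I would fix $\b{w}_h \in \b{\mathcal{W}_h}$ and work on a single contact edge $e \in \cF_h^C$ with associated element $T \in \cT_h^C$ (recall the standing assumption that each such $T$ has exactly one contact edge, so these elements are distinct). On $e$, parametrized say by arclength, choose a quadratic scalar function $q_e \in \mathbb{P}_2(e)$ with $\frac{1}{h_e}\int_e q_e\,ds = 1$ — for instance an affine function, or a bubble-type function $q_e$ that additionally vanishes at the two endpoints of $e$, which is convenient for matching across vertices. Then on $T$ define $\b{v}_h|_T$ to be the quadratic vector field that restricts to $(\b{w}_h|_e)\, q_e$ on the edge $e$ and is extended into $T$ by any fixed rule (e.g. constant in the direction transverse to $e$, or determined by prescribing nodal/midpoint values on $T$); since $T$ has only one contact edge this is unambiguous. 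On every element $T' \in \cT_h$ that is \emph{not} in $\cT_h^C$, set $\b{v}_h|_{T'} = \b{0}$. This defines $\b{v}_h \in \b{\mathcal{V}_h}$ because $\b{\mathcal{V}_h}$ imposes no interelement continuity — each element is assigned a $[\mathbb{P}_2(T)]^2$ function independently.

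By construction, for each $e \in \cF_h^C$ we compute $\Gamma_h(\b{v}_h)|_e = (\b{w}_h|_e)\,q_e \in [\mathbb{P}_2(e)]^2$, hence
\[
\b{\phi}_h(\b{v}_h)|_e = \Pi_h\big((\b{w}_h|_e)\, q_e\big)\big|_e = \frac{1}{h_e}\int_e (\b{w}_h|_e)\, q_e\, ds = (\b{w}_h|_e)\, \frac{1}{h_e}\int_e q_e\, ds = \b{w}_h|_e,
\]
so $\b{\phi}_h(\b{v}_h) = \b{w}_h$, which is exactly surjectivity. The argument is essentially a bookkeeping one, so there is no serious obstacle; the only point requiring a little care is verifying that the piecewise definition genuinely lands in $\b{\mathcal{V}_h}$ — this is where the \emph{discontinuous} nature of the space is used, since we make no attempt to match traces across edges, and the restriction of $\Gamma_h(\b{v}_h)$ to a contact edge $e$ is well-defined precisely because $e$ lies on the boundary and is shared by only one element of $\cT_h$. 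One could alternatively phrase the proof abstractly: $\Gamma_h$ maps onto $\b{\mathcal{Z}_h}$ (again by the discontinuity, choosing edgewise quadratic data and extending), and $\Pi_h$ restricted to $\b{\mathcal{Z}_h}$ is onto $\b{\mathcal{W}_h}$ since averaging a constant returns it; the composition of surjections is surjective. I would present the explicit version, as it also foreshadows the quantitative estimates on $\b{\phi}_h$ needed later.
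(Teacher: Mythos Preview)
Your proof is correct and follows essentially the same strategy as the paper: construct an explicit preimage element by element, exploiting the fact that $\b{\mathcal{V}_h}$ is discontinuous and that each contact element has a single contact edge. The only difference is cosmetic: the paper takes the simplest choice $q_e\equiv 1$, i.e.\ sets $\b{v}_h^*|_{T^j}$ equal to the constant vector $\b{w}_h|_{d^j}$ on the element $T^j$ carrying the contact edge $d^j$ and $\b{0}$ elsewhere, so no extension rule or bubble is needed.
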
 
\begin{proof}
	Let $\mathcal{O}:=\big\{d^j,~j \in \{1,2, \cdots, n\}\big\}$ denotes the enumeration of edges on $\Gamma_C$.  Thus $$\Gamma_C= \bigcup\limits_{j=1}^{n}d^j.$$ Recalling the definition of space $\b{\mathcal{W}_h}$,  for any $\b{w}_h \in \b{\mathcal{W}_h}$, we say that for $1\leq j \leq n$,  $$\b{w}_h|_{d^j} :=(\alpha^{j,1},\alpha^{j,2})$$ where $\alpha^{j,1}$ and  $\alpha^{j,2}$ are constant quantities.  Now,  we construct a test function $\b{v}^*_h \in \b{\mathcal{V}_h}$ as follows
\begin{align*}
\begin{split}
\begin{aligned}
\b{v}^{*}_h = \begin{cases} &\hspace{-0.2 cm}(\alpha^{j,1},\alpha^{j,2})~~~~~~~~~\text{if}~T\in T^j, \\
&(0,0)~~~~~~~\hspace{1 cm}\text{if}~T \in \cT_h \setminus \bigcup\limits_{j=1}^{n}T^j. \end{cases}
\end{aligned}
\end{split}
\end{align*}
where $ T^j$ refers to the triangle such that $d^j \subseteq \partial T^j$.  Finally,  we have $\b{\phi}_h(\b{v}^*_h)= \Pi_h(\Gamma_h(\b{v}^*_h))=\b{w}_h$. Hence, the claim follows.
\end{proof}
\par
\noindent
For the sake of clarity,  we will utilize the following depiction for the map $\b{\phi}_h = (\phi_{h,1}, \phi_{h,2})$ where
\begin{align*}
\phi_{h,1}(\b{v}_h)|_e= \frac{1}{h_e}\int \limits_{e} v_{h,1}~ds~~\text{and}~~
\phi_{h,2}(\b{v}_h)|_e= \frac{1}{h_e}\int \limits_e v_{h,2}~ds 
\end{align*}
for all $e\in\mathcal{F}^C_h$ and $\b{v}_h \in  \b{\mathcal{V}_h}$.
\vspace{0.3 cm}
\par
\noindent 
As a consequence of Lemma \ref{prop1}, we observe that there exists a continuous right inverse $$\b{\phi}_h^{-1}: \b{\mathcal{W}_h} \rightarrow \b{\mathcal{V}_h}$$ which is defined as $\b{\phi}_h^{-1}(\b{w}_h)=\b{v}_h^*$, where $\b{v}_h^*$ is stated in the proof of Lemma \ref{prop1}. 
\vspace{0.5 cm}
\par
\noindent
Next, we introduce the discrete contact force density $\b{\sigma}_h:=(\sigma_{h,1},\sigma_{h,2}) \in \b{\mathcal{W}_h}$ as
\begin{align} \label{def:dcfd}
( \b{\sigma}_h, \b{w}_h ):= B(\b{\phi}_h^{-1}(\b{w}_h))- M_{DG}(\b{w}_h, \b{\phi}_h^{-1}(\b{w}_h)) \quad \forall~ \b{w}_h \in \b{\mathcal{W}_h}
\end{align}
where $(\cdot, \cdot)$ denotes $\b{L}^2$ inner product on $\Gamma_C$.
\vspace{0.3 cm}
\par 
\noindent
\begin{remark} \label{prop2}
For the space $\b{\mathcal{V}^0_h}:= \{\b{v}_h \in \b{\mathcal{V}_h}~:~\phi_{h,1}(\b{v}_h)=0\}$,  it is useful to note that
	\begin{align} \label{sign}
	B(\b{z}_h) - M_{DG}(\b{u}_h,\b{z}_h)=0 ~\text{for any}~ \b{z}_h \in \b{\mathcal{V}_h^0}
	\end{align}
as $ \b{u}_h \pm \b{z}_h \in \b{\mathcal{K}_h}$.  The space $\b{\mathcal{V}_h^0}$ will play an essential role in proving well-definedness of map $\b{\sigma}_h$.
\end{remark}
\noindent
The upcoming lemma is a consequence of equation \eqref{def:dcfd} and Remark \ref{prop2}.
\begin{lemma} \label{ksign}
	The map $\b{\sigma}_h \in \b{\mathcal{W}_h}$ defined by relation \eqref{def:dcfd} is well-defined and satisfies
	\begin{align}
	( \b{\sigma}_h, \b{v}_h ) = B(\b{v}_h)- M_{DG}(\b{u}_h, \b{v}_h) \quad \forall~ \b{v}_h \in \b{\mathcal{V}_h}.
	\end{align}
\end{lemma}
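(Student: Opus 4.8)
The plan is to settle well-definedness by a Riesz argument and then to prove the identity by splitting an arbitrary $\b{v}_h\in\b{\mathcal{V}_h}$ along the space $\b{\mathcal{V}^0_h}$ introduced in Remark \ref{prop2}.

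First I would observe that the right-hand side of \eqref{def:dcfd}, namely $\b{w}_h\mapsto B(\b{\phi}_h^{-1}(\b{w}_h))-M_{DG}(\b{u}_h,\b{\phi}_h^{-1}(\b{w}_h))$, is a linear functional on $\b{\mathcal{W}_h}$: the map $\b{\phi}_h^{-1}$ is linear (its value $\b{v}_h^*$ is assembled from the edgewise constants $\alpha^{j,i}$, which depend linearly on $\b{w}_h$), while $B(\cdot)$ and $M_{DG}(\b{u}_h,\cdot)$ are linear by construction. Since $\b{\mathcal{W}_h}$ is finite dimensional and equipped with the $\b{L}^2(\Gamma_C)$ inner product, the Riesz representation theorem furnishes a unique $\b{\sigma}_h\in\b{\mathcal{W}_h}$ satisfying \eqref{def:dcfd}; this is exactly the asserted well-definedness. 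As a by-product, the identity established below shows that $\b{\sigma}_h$ does not in fact depend on the particular right inverse $\b{\phi}_h^{-1}$ fixed after Lemma \ref{prop1}.

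Next, fix $\b{v}_h\in\b{\mathcal{V}_h}$, set $\b{w}_h:=\b{\phi}_h(\b{v}_h)\in\b{\mathcal{W}_h}$ and $\b{z}_h:=\b{v}_h-\b{\phi}_h^{-1}(\b{w}_h)\in\b{\mathcal{V}_h}$. Since $\b{\phi}_h$ is linear and $\b{\phi}_h\circ\b{\phi}_h^{-1}=\mathrm{id}$ on $\b{\mathcal{W}_h}$, we get $\b{\phi}_h(\b{z}_h)=\b{0}$, hence in particular $\phi_{h,1}(\b{z}_h)=0$, i.e.\ $\b{z}_h\in\b{\mathcal{V}^0_h}$. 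Remark \ref{prop2} (relation \eqref{sign}) then gives $B(\b{z}_h)-M_{DG}(\b{u}_h,\b{z}_h)=0$. Using the linearity of $B$ and of $M_{DG}(\b{u}_h,\cdot)$ together with the definition \eqref{def:dcfd},
\begin{align*}
B(\b{v}_h)-M_{DG}(\b{u}_h,\b{v}_h)
&=\big[B(\b{\phi}_h^{-1}(\b{w}_h))-M_{DG}(\b{u}_h,\b{\phi}_h^{-1}(\b{w}_h))\big]+\big[B(\b{z}_h)-M_{DG}(\b{u}_h,\b{z}_h)\big]\\
&=(\b{\sigma}_h,\b{w}_h)=(\b{\sigma}_h,\b{\phi}_h(\b{v}_h)).
\end{align*}

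It then remains to identify $(\b{\sigma}_h,\b{\phi}_h(\b{v}_h))$ with $(\b{\sigma}_h,\b{v}_h)$, where the latter is read as the $\b{L}^2(\Gamma_C)$ pairing of $\b{\sigma}_h$ with the trace of $\b{v}_h$ on $\Gamma_C$. This is immediate from the piecewise-constant structure of $\b{\sigma}_h$: for each $e\in\cF_h^C$, since $\b{\sigma}_h|_e$ is constant, $\int_e\b{\sigma}_h\cdot\b{v}_h\,ds=\b{\sigma}_h|_e\cdot\int_e\b{v}_h\,ds=h_e\,\b{\sigma}_h|_e\cdot\b{\phi}_h(\b{v}_h)|_e=\int_e\b{\sigma}_h\cdot\b{\phi}_h(\b{v}_h)\,ds$; summing over $e\in\cF_h^C$ gives $(\b{\sigma}_h,\b{v}_h)=(\b{\sigma}_h,\b{\phi}_h(\b{v}_h))$, and combining with the previous display completes the proof. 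I do not anticipate a genuine obstacle here; the only point requiring care is the correct reading of the pairing $(\b{\sigma}_h,\b{v}_h)$ and the observation that it sees only the edgewise averages of $\b{v}_h$ — everything else is linearity of the forms plus the defining property \eqref{sign} of $\b{\mathcal{V}^0_h}$.
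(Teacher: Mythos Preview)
Your proposal is correct and follows essentially the same route as the paper: both split $\b{v}_h$ as $\b{\phi}_h^{-1}(\b{\phi}_h(\b{v}_h))$ plus a remainder in $\b{\mathcal{V}^0_h}$ and invoke Remark \ref{prop2}. Your Riesz-representation argument for well-definedness is a cleaner substitute for the paper's (somewhat awkwardly phrased) check that the value is independent of the chosen preimage, and you spell out the identification $(\b{\sigma}_h,\b{v}_h)=(\b{\sigma}_h,\b{\phi}_h(\b{v}_h))$ that the paper uses without comment.
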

\begin{proof}
Firstly, we prove that the map $\b{\sigma}_h \in \b{\mathcal{W}_h}$ is well-defined.  Let $\b{w}_h, \b{r}_h \in \b{\mathcal{W}_h}$ be such that $\b{w}_h=\b{r}_h$.  There exist $\b{v}_1,  \b{v}_2 \in \b{\mathcal{V}_h}$ such that $\b{\phi}_h(\b{v}_1)=\b{w}_h$ with $\b{\phi}_h^{-1}(\b{w}_h)=\b{v}_1$ and  $\b{\phi}_h(\b{v}_2)=\b{r}_h$ with $\b{\phi}_h^{-1}(\b{r}_h)=\b{v}_2$.  Let $\b{z}:=\b{v}_1-\b{v}_2$ and using $\b{w}_h=\b{r}_h$, we have $\b{\phi}_h(\b{z})=0$. By Remark \ref{prop2}, we conclude $( \b{\sigma}_h, \b{w}_h)=( \b{\sigma}_h, \b{r}_h)$. For the second part, let $\b{v}_h \in \b{\mathcal{V}_h}$, then we have
\\
	\begin{align*}
( \b{\sigma}_h, \b{v}_h )&= ( \b{\sigma}_h, \b{\phi}_h(\b{v}_h) ) \\ & = B\big(\b{\phi}_h^{-1}(\b{\phi}_h(\b{v}_h))\big )-M_{DG}\big(\b{u}_h, \b{\phi}_h^{-1}(\b{\phi}_h(\b{v}_h))\big) \\ & = B\big(\b{\phi}_h^{-1}(\b{\phi}_h(\b{v}_h))- \b{v}_h\big )- M_{DG}\big(\b{u}_h, \b{\phi}_h^{-1}(\b{\phi}_h(\b{v}_h))-\b{v}_h\big) + B(\b{v}_h) - M_{DG}(\b{u}_h,\b{v}_h) \\ & =B(\b{v}_h) - M_{DG}(\b{u}_h,\b{v}_h)
	\end{align*}
by using the fact that $\b{\phi}_h^{-1}(\b{\phi}_h(\b{v}_h))- \b{v}_h \in \b{\mathcal{V}_h^0}$ and equation \eqref{sign}.
\end{proof}
\vspace{0.2 cm}
\noindent
For any $\b{v}_h=(v_{h,1},v_{h,2}) \in \b{\mathcal{V}_h}$, we make use of the following representation
	\begin{align*}
( \b{\sigma}_h, \b{\phi}_h(\b{v}_h))= ( {\sigma}_{h,1}, \phi_{h,1}(\b{v}_h))_{L^2(\Gamma_C)} + ({\sigma}_{h,2}, \phi_{h,2}(\b{v}_h)_{L^2(\Gamma_C)}, 
	\end{align*}
with the components
\begin{align*}	
({\sigma}_{h,1}, \phi_{h,1}(\b{v}_h))_{L^2(\Gamma_C)}  &:= B((v_{h,1},0))- M_{DG}(\b{u}_h,(v_{h,1},0)),\\ ({\sigma}_{h,2}, \phi_{h,2}(\b{v}_h))_{L^2(\Gamma_C)} &:=B((0,v_{h,2}))- M_{DG}(\b{u}_h,(0,v_{h,2})).
\end{align*}
\noindent
Observe that $(0,v_{h,2}) \in \b{\mathcal{V}^0_h}$, thus using Remark \ref{sign} we find $({\sigma}_{h,2}, \phi_{h,2}(\b{v}_h))_{L^2(\Gamma_C)} = 0.$
\begin{remark} \label{sign4}
As $\b{\sigma}_h \in \b{\mathcal{W}_h} \subseteq \b{L}^2(\Gamma_C)$,  it can also be identified as an element of $\b{H}^{-\frac{1}{2}}(\Gamma_C)$ by defining
	\begin{align} \label{prop3}
	\langle \b{\sigma}_h, \b{v}\rangle_c = \int\limits_{\Gamma_C} \b{\sigma}_h \cdot\b{v}~ds \quad\forall~\b{v}\in\b{H}^{\frac{1}{2}}(\Gamma_C). 
	\end{align}
\end{remark}

\vspace{0.3 cm}
\par
\noindent
Next, we obtain the sign properties of discrete contact force density $\b{\sigma}_h$ which play a key role in proving the reliability estimates. 
\begin{lemma}\label{sign2}
	Let $e \in \cF_h^C$.  Then,  the discrete contact force density $\b{\sigma}_h=(\sigma_{h,1},\sigma_{h,2})$ satisfies
	\begin{align*}
	\begin{split}
	\begin{aligned}
	\sigma_{h,1}|_e \geq 0~~\text{ and }~~~~
	\sigma_{h,2}|_e = 0
	\end{aligned} \quad \forall~ e \in \cF_h^C.
	\end{split}
	\end{align*}
In addition,  if $e^* \in \mathcal{C}_h^N : = \bigg\{ e \in \cF_h^C ~:~ \int\limits_e u_{h,1}~ds < \int\limits_e \chi~ds ~ \bigg \}~\text{then}~~\sigma_{h,1}|_{e^*}=0$.
\end{lemma}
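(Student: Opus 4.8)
The plan is to read off the sign of each component of $\b{\sigma}_h$ directly from the discrete variational inequality \eqref{eq:DVI}, by inserting into it test functions of $\b{\mathcal{K}_h}$ that are, as far as the contact boundary is concerned, supported on a single edge. The point of departure is that, combining Lemma \ref{ksign} with \eqref{eq:DVI}, for every $\b{v}_h \in \b{\mathcal{K}_h}$ and $\b{w}_h := \b{v}_h - \b{u}_h \in \b{\mathcal{V}_h}$ one has
\[
(\b{\sigma}_h, \b{w}_h) = B(\b{w}_h) - M_{DG}(\b{u}_h, \b{w}_h) \le 0 ,
\]
and this is the only inequality to be exploited. Throughout I would fix $e \in \cF_h^C$ and let $T_e$ be the unique element of $\mathcal{T}^C_h$ with $e \subset \partial T_e$; the standing assumption that each element of $\mathcal{T}^C_h$ has exactly one contact edge, together with the complete absence of inter-element continuity in $\b{\mathcal{V}_h}$, is precisely what lets a perturbation localised on $T_e$ leave the integral constraints on all other contact edges untouched.

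First I would dispose of $\sigma_{h,2}|_e = 0$, which is essentially already in hand: from the splitting recorded just before the statement, $(\sigma_{h,2}, \phi_{h,2}(\b{v}_h))_{L^2(\Gamma_C)} = 0$ for every $\b{v}_h \in \b{\mathcal{V}_h}$, because $(0, v_{h,2}) \in \b{\mathcal{V}^0_h}$ and \eqref{sign} applies. Taking $\b{v}_h$ equal to $(0,1)$ on $T_e$ and $\b{0}$ elsewhere makes $\phi_{h,2}(\b{v}_h)$ equal to $1$ on $e$ and $0$ on $\cF_h^C \setminus \{e\}$, so that $h_e\,\sigma_{h,2}|_e = 0$, and hence $\sigma_{h,2}|_e = 0$.

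Next, for $\sigma_{h,1}|_e \ge 0$ I would test against $\b{v}_h := \b{u}_h + \b{z}_h$, where $\b{z}_h \in \b{\mathcal{V}_h}$ equals $(-1,0)$ on $T_e$ and $\b{0}$ on every other element. Then $\int_e v_{h,1}\, ds = \int_e u_{h,1}\, ds - h_e \le \int_e \chi\, ds$, while on any other contact edge $\b{z}_h$ vanishes, so $\b{v}_h \in \b{\mathcal{K}_h}$; since $\phi_{h,1}(\b{z}_h)$ equals $-1$ on $e$ and $0$ elsewhere and $\phi_{h,2}(\b{z}_h) = 0$, turning $(\b{\sigma}_h,\b{z}_h)$ into an edge integral via the representation after Lemma \ref{ksign} gives $-h_e\,\sigma_{h,1}|_e = (\b{\sigma}_h, \b{z}_h) \le 0$, i.e. $\sigma_{h,1}|_e \ge 0$. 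For the support statement, if $e^* \in \mathcal{C}_h^N$ then $\delta^* := \int_{e^*}(\chi - u_{h,1})\, ds > 0$; with $\b{y}_h \in \b{\mathcal{V}_h}$ equal to $(1,0)$ on $T_{e^*}$ and $\b{0}$ elsewhere one checks that $\b{u}_h \pm t\, \b{y}_h \in \b{\mathcal{K}_h}$ for all $0 < t \le \delta^*/h_{e^*}$ (the $e^*$-constraint is respected by the choice of $t$, all other contact edges being frozen). Feeding both signs into the displayed inequality gives $\pm t\,(\b{\sigma}_h, \b{y}_h) \le 0$, hence $(\b{\sigma}_h, \b{y}_h) = 0$; since $(\b{\sigma}_h, \b{y}_h) = h_{e^*}\,\sigma_{h,1}|_{e^*}$, we conclude $\sigma_{h,1}|_{e^*} = 0$.

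The bookkeeping (evaluating $\phi_{h,1}$ and $\phi_{h,2}$ on the chosen $\b{z}_h,\b{y}_h$, and using that these functions are piecewise constant to rewrite $(\b{\sigma}_h,\cdot)$ as an edge integral) is harmless. The only real obstacle is the admissibility check — verifying that each one-triangle perturbation stays in $\b{\mathcal{K}_h}$, and in particular that the constraint on $e$ can only relax when one subtracts (giving the one-sided bound $\ge 0$), whereas for the support claim the strict inequality on $e^*$ buys room for perturbations of either sign (forcing equality). This is exactly where the ``one contact edge per element'' hypothesis and the discontinuity of $\b{\mathcal{V}_h}$ are used.
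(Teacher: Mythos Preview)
Your proof is correct and follows essentially the same approach as the paper: insert localised test functions into the discrete variational inequality (via Lemma~\ref{ksign}) to read off the sign of $\b{\sigma}_h$ edge by edge. The only cosmetic difference is that the paper uses single nodal $\mathbb{P}_2$-basis bumps on $e$ rather than your piecewise-constant perturbations on $T_e$, and it re-derives $\sigma_{h,2}|_e=0$ by the same test-function device instead of invoking the $\b{\mathcal{V}^0_h}$ observation directly.
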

\begin{proof}
In order to establish the proof of the lemma we construct an appropriate test function $\b{v}_h\in \b{{\mathcal{V}}_h}$ by following these steps: 
Let $e \in \mathcal{F}^C_h$ be an arbitrary edge on the potential contact boundary.  Then choose a node $p$ from $ \mathcal{N}^e_h \cup \mathcal{M}^e_h$ and define 
\begin{align*}
\begin{split}
\begin{aligned}
\b{v}_h(z) = \begin{cases} &(1,0)~~~\text{if}~ z=p, \\
&(0,0)~~~\text{if}~z\in (\mathcal{N}_h \cup \mathcal{M}_h)\setminus \{p\}. \end{cases}
\end{aligned}
\end{split}
\end{align*}
Observe that $\b{u}_h-\b{v}_h\in \b{{\mathcal{K}}_h}$.  This implies $(\b{\sigma}_h,  \b{v}_h) \geq 0$
as $B(\b{v}_h)\geq M_{DG}(\b{u}_h, \b{v}_h)$ using discrete variational inequality \eqref{eq:DVI}.
Also,  
\begin{align}\label{1.10}
0 \leq(\b{\sigma}_h,  \b{v}_h)=(\b{\sigma}_h, \b{\phi}_h(\b{v
}_h))=  ( {\sigma}_{h,1}, \phi_{h,1}(\b{v}_h))_{L^2(\Gamma_C)} = \int\limits_e \sigma_{h,1}  \phi_{h,1}(\b{v}_h)~ds =   \sigma_{h,1}|_e \int \limits_e v_{h,1}~ds. 
\end{align}
Thus,  we have $\sigma_{h,1}|_e\geq 0$ since $ \int \limits_e v_{h,1}~ds > 0$.  Similarly upon defining a suitable test function $\b{v}_h \in \b{\mathcal{V}_h}$ one can  show that $\sigma_{h,2}|_e = 0 \quad\forall~e \in \cF_h^C.$ Now for $e^* \in \mathcal{C}_h^N, $ choose an arbitrary node $p \in \mathcal{N}^h_e \cup \mathcal{M}^h_e$.  For sufficiently small $\delta>0$,  define  
\begin{align*}
\b{v}_h = \b{u}_h \pm \delta\phi_p\b{e_1}, 
\end{align*}
where $\phi_p$ refers to  $\mathbb{P}_2$ Lagrange nodal basis function corresponding to the node $p$.  Note that $\b{u}_h \pm \b{v}_h \in \b{\mathcal{K}_h}$.  Thus  $(\b{\sigma}_h, \b{v}_h) = B(\b{v}_h) - M_{DG}(\b{u}_h, \b{v}_h)) = 0$.  Further using relation \eqref{1.10} we find
\begin{align*}
(\b{\sigma}_h, \b{v}_h)  = \sigma_{h,1}|_e \int \limits_e v_{h,1}~ds.
\end{align*}
This in turn shows that $\sigma_{h,1}|_e = 0$
as $ \int \limits_e v_{h,1}~ds > 0$.  Consequently $\sigma_{h,1}|_e = 0 \quad\forall~e\in \mathcal{C}^N_h.$
\end{proof}
\noindent
Next we introduce the following map $\widehat{{\b{\sigma}}}_h : \bm{L}^1(\Gamma_C) \longrightarrow \mathbb{R}$ by
	\begin{align} \label{sign6}
	\widehat{\b{\sigma}
	}_h(\bm{v}):= \int\limits_{\Gamma_C} \b{\sigma}_h \cdot\b{v}~ds \quad\forall~\b{v} \in  \bm{L}^{1}(\Gamma_C).  
	\end{align}
Observe that the map defined in relation \eqref{sign6} is well-defined as $\b{\sigma}_h |_e \in [\mathbb{P}_0(e)]^2 \quad\forall~ e \in \mathcal{F}^C_h.$
\begin{remark}\label{rem121}
In particular, observe that if $\bm{w} \in \bm{\cV}$ then 
	\begin{align}
	\widehat{\b{\sigma}
	}_h(\gamma_0(\bm{w}))= \langle\b{\sigma}_h,  \gamma_c(\b{w})\rangle_c.
	\end{align}
\end{remark}
\par
\noindent
\subsection{The Smoothing Operator}
In this segment we construct a continuous approximation to the discontinuous solution $\b{u}_h$.  This operator plays a crucial role while carrying out the analysis using DG methods.  The manner in which it is built can vary depending on the specific requirements \cite{Brenner2003KornsIF}.  Here,  we construct the smoothing map $\bm{E}_h=({E}_{h,1}, {E}_{h,2})$ on the discrete space 
\begin{align*}
\bm{E}_h:\b{\mathcal{V}_h} \rightarrow\b{\mathcal{V}^C_h}:= \{\bm{v}_h \in  \b{H}^1(\Omega)~|~\bm{v}_h|_T \in [\mathbb{P}_2(T)]^2 ~\forall~ T \in \cT_h~ \text{and}~ \bm{v}_h = \bm{0} ~~\text{on}~ \Gamma_D\}
\end{align*}
using the technique of averaging as follows:
\begin{itemize}
\item  For $p \in \cN_h^D \cup \cM^D_h$,  we set $\bm{E}_h\b{u}_h(p) = \b{0}$.

\item For the remaining nodes define
$\b{E}_h\b{u}_h(p) = \dfrac{1}{|\mathcal{T}_p|} \sum\limits_{T \in \cT_p} \b{u}_h|_T(p)$.
\end{itemize}
\par
\noindent
Next we state the approximation properties of enriching map $\bm{E}_h$ which are crucial for the subsequent analysis.  
\begin{lemma} \label{enriching_map}
	Let $\bm{E}_h:\b{\mathcal{V}_h} \rightarrow \b{\mathcal{V}^C_h}$ be the smoothing operator. Then,  for any $\bm{v}_h \in \bm{\cV_h$}, the following holds
	\begin{align} 
	\max_{T\in \cT_h}~h_T\|\bm{\nabla} (\bm{E}_h\b{v}_h-\bm{v}_h)\|_{\bm{L}^{\infty}(T)}  &\lesssim  ~\|\sjump{\bm{u}_h}\|_{\bm{L}^{\infty}(\cF^0_{h})},   \label{GFGFF} \\  \max_{T\in \cT_h}~\| \bm{E}_h\b{v}_h-\bm{v}_h\|_{\bm{L}^{\infty}(T)} & \lesssim  ~\|\sjump{\bm{u}_h}\|_{\bm{L}^{\infty}(\cF^0_{h})}. \label{GFGF}
\end{align}
\end{lemma}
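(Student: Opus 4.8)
The plan is to prove the two estimates simultaneously by exploiting the standard structure of averaging (Oswald-type) operators: the value of $(\b{E}_h\b{v}_h - \b{v}_h)$ at any Lagrange node $p$ of an element $T$ is a (signed) average of differences $\b{v}_h|_T(p) - \b{v}_h|_{T'}(p)$ over elements $T'$ sharing $p$, and each such difference is controlled by jumps of $\b{v}_h$ across a chain of interelement edges connecting $T$ to $T'$. Since $\b{E}_h\b{v}_h - \b{v}_h$ is a polynomial of degree $\le 2$ on each $T$, its $\b{L}^\infty(T)$ norm and the $\b{L}^\infty(T)$ norm of its scaled gradient $h_T\|\b{\nabla}(\cdot)\|$ are both equivalent (up to shape-regularity constants) to the maximum of the absolute nodal values over the six Lagrange nodes of $T$ (vertices and edge midpoints). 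This reduces both \eqref{GFGFF} and \eqref{GFGF} to bounding those nodal values.

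\textbf{Key steps.} First I would fix $T \in \cT_h$ and a Lagrange node $p$ of $T$, and write
\begin{align*}
(\b{E}_h\b{v}_h - \b{v}_h)|_T(p) = \frac{1}{|\cT_p|}\sum_{T' \in \cT_p}\big(\b{v}_h|_{T'}(p) - \b{v}_h|_T(p)\big),
\end{align*}
with the convention $\b{v}_h|_{T'}(p) = \b{0}$ if $p \in \cN_h^D \cup \cM_h^D$ (so the Dirichlet case is handled uniformly). Second, for each $T'$ in the patch $\cT_p$ I would connect $T$ to $T'$ by a short path of elements in $\cT_p$, all sharing the node $p$; by shape-regularity the number of elements in $\cT_p$, hence the length of any such path, is bounded by a constant depending only on the mesh regularity. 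Third, telescoping along the path and using that for two elements $T_1, T_2$ sharing an edge $e$ one has $|\b{v}_h|_{T_1}(p) - \b{v}_h|_{T_2}(p)| \le \|\sjump{\b{v}_h}\|_{\b{L}^\infty(e)}$ whenever $p \in e$ (and a trace/inverse estimate on $\b{L}^\infty(e)$ versus neighbouring edges when $p$ is a midpoint not lying on the connecting edge), each nodal difference is bounded by a constant times $\|\sjump{\b{v}_h}\|_{\b{L}^\infty(\cF_h^0)}$. Fourth, I would invoke the norm-equivalence on the reference element and a scaling argument (Lemma \ref{Lemama}) to convert the pointwise nodal bound into \eqref{GFGF} directly, and into \eqref{GFGFF} after multiplying by $h_T$ and noting that $\|\b{\nabla}\b{w}_h\|_{\b{L}^\infty(T)} \lesssim h_T^{-1}\|\b{w}_h\|_{\b{L}^\infty(T)}$ for $\b{w}_h \in [\bbP_2(T)]^2$. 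Finally, take the maximum over $T \in \cT_h$.

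\textbf{Main obstacle.} The routine part is the scaling/norm-equivalence bookkeeping; the genuinely delicate point is the combinatorial path argument on the patch $\cT_p$ together with the fact that a midpoint node $p$ of an edge $e^* \subset \partial T$ need not lie on the interior edges used to connect $T$ to the other elements of $\cT_p$ — so one cannot always telescope using only jumps across edges containing $p$. The fix is to observe that for $\bbP_2$ elements the midpoint values on any edge of $T'$ are controlled, via an inverse estimate on the triangle, by the vertex values of $T'$ plus the edge-midpoint jumps, so one can reduce everything to propagating information through the vertex nodes of the patch (which do lie on the connecting edges) and then pay one extra local jump term. Care must also be taken that the right-hand sides of \eqref{GFGFF}–\eqref{GFGF} are written in terms of $\b{u}_h$ rather than $\b{v}_h$: since the estimates are stated for general $\b{v}_h \in \b{\cV_h}$ but the averaging map $\b{E}_h$ is only defined (in the excerpt) via $\b{u}_h$, I would read the statement as $\b{v}_h$-intrinsic, i.e. prove $\max_T h_T\|\b{\nabla}(\b{E}_h\b{v}_h - \b{v}_h)\|_{\b{L}^\infty(T)} \lesssim \|\sjump{\b{v}_h}\|_{\b{L}^\infty(\cF_h^0)}$ and likewise for the zeroth-order term, which specializes to the displayed inequalities when $\b{v}_h = \b{u}_h$.
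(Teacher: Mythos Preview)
The paper does not actually prove this lemma: it states that ``the proof of the lemma can be accomplished using the similar ideas as in [Lemma 3.5, \cite{KP:2022:Signorini}] and therefore it is omitted.'' Your proposal follows precisely the standard averaging/Oswald-operator argument that underlies such results (nodal representation of $\b{E}_h\b{v}_h-\b{v}_h$, telescoping across a bounded-length chain of elements in the patch $\cT_p$, norm equivalence on $[\bbP_2(T)]^2$, and the inverse estimate to pass from the $\b{L}^\infty$ bound to the gradient bound), so it is entirely consistent with what the paper defers to. Your observation that the right-hand side should read $\sjump{\b{v}_h}$ rather than $\sjump{\b{u}_h}$ for the inequality to make sense for arbitrary $\b{v}_h\in\b{\cV_h}$ is also correct; the lemma is only ever applied with $\b{v}_h=\b{u}_h$ in the paper, which explains the slip.
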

\noindent
The proof of the lemma  can be accomplished using the similar ideas as in [Lemma 3.5 ,\cite{KP:2022:Signorini}] and therefore it is omitted.
\vspace{0.3 cm}
\par
\noindent
In order to facilitate the further analysis,  we require a quasi-interpolation operator $$\Phi_h : \b{L}^1(\Omega) \rightarrow \b{\mathcal{V}_h}$$ which is defined in the following way.
\begin{align}
\Phi_h\b{v}|_T(x)=\Phi_T\b{v}(x):= \sum_{i=1}^2 \sum_{p \in \mathcal{N}_h^T \cup \mathcal{M}_h^T} \phi_p^T(x) \int\limits_{T} \psi_p^T(z)v_i(z) \b{e}_i~dz \quad\forall~\b{v}=(v_1,v_2) \in   \b{L}^1(\Omega)
\end{align}
where $\{ \phi_p^T(x)~; p \in \mathcal{N}_h^T \cup \mathcal{M}_h^T, 	~ T \in \mathcal{T}_h \}$ denotes the Lagrange basis for the space $\b{\mathcal{V}_h}$ and $\psi_p^T(x)$ is $L^2(T)$ dual basis of $\phi_p^T(x)$~\cite{demlow2012pointwise}.
\vspace{0.3 cm}
\par
\noindent
In the next lemma, we state the stability and approximation results of interpolation operator $\Phi_h$ which can be established using Bramble Hilbert lemma (for details refer to \cite{demlow2012pointwise}).
\begin{lemma} \label{approx}
	Let $T \in \cT_h$ and $T^*= \{T' \in \cT_h: T' \cap T \neq \emptyset \}$. Then,  for $\{t \in \mathbb{N} \cup \{0\};~t \leq 3 \},$ the following holds  
	\begin{align}
	|\Phi_h \b{\psi}|_{\bm{W}^{t,1}(T)} &\lesssim |\b{\psi}|_{\bm{W}^{t,1}(T^*)}, \label{DR1}\\
	\|{\b{\psi} -\Phi_h\b{\psi}}\|_{\bm{W}^{s,1}(T)} & \lesssim h_T^{t-s}|\b{\psi}|_{\bm{W}^{t,1}(T^*)}~~ s\in \{0,1,2\}, \label{DR2}	\end{align}
	where $\b{\psi} \in \b{W}^{t,1}(T)$.
\end{lemma}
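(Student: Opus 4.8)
The plan is to reduce the global claim to a scaling argument on a single element via the Bramble--Hilbert lemma, exactly in the spirit of the classical Cl\'ement/Scott--Zhang analysis. The operator $\Phi_h$ is built from the $L^2(T)$-dual basis $\{\psi_p^T\}$ of the Lagrange basis $\{\phi_p^T\}$, so that $\Phi_h$ reproduces polynomials of degree $\le 2$ locally; in particular, since we are working with the quadratic space $\b{\mathcal{V}_h}$, one has $\Phi_h q = q$ on $T$ for every $q \in [\mathbb{P}_2(T)]^2$. Because the assumed value $t$ ranges over $\{0,1,2,3\}$, the argument genuinely uses this degree-$2$ reproduction only; for $t=3$ it will still be the reproduction of $\mathbb{P}_2$ (one order less) that drives the estimate, so no quadratic polynomial of degree exactly $3$ is reproduced and the bound \eqref{DR2} only claims $h_T^{t-s}$ with $t \le 3$, consistent with the approximation order of $[\mathbb{P}_2]^2$.

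The key steps, in order, are as follows. First, I would fix $T \in \cT_h$, introduce the affine reference map $F_T:\hat T \to T$, and pull everything back to $\hat T$; since the mesh is regular (shape-regular) in the sense of \cite{ciarlet2002finite}, the usual equivalences $|\b{\psi}|_{\b{W}^{s,1}(T)} \simeq h_T^{s - 2}\,|\hat{\b{\psi}}|_{\b{W}^{s,1}(\hat T)}$ (the $-2$ coming from the Jacobian in one space dimension two, i.e.\ the area scaling of the $L^1$ norm) hold with constants depending only on the shape-regularity parameter. Second, for the stability bound \eqref{DR1} I would write $\Phi_T \b{\psi}$ on $\hat T$ as a finite sum $\sum_{i}\sum_{p}\hat\phi_p(\hat x)\int_{\hat T}\hat\psi_p(\hat z)\hat\psi_i(\hat z)\,d\hat z\,\b{e}_i$, estimate each coefficient by $\|\hat\psi_p\|_{L^\infty(\hat T)}\|\hat{\b{\psi}}\|_{\b{L}^1(\hat T)}$, and bound $|\hat\phi_p|_{\b{W}^{t,1}(\hat T)}$ by a fixed constant, giving $|\Phi_{\hat T}\hat{\b{\psi}}|_{\b{W}^{t,1}(\hat T)} \lesssim \|\hat{\b{\psi}}\|_{\b{L}^1(\hat T)}$; but since $\Phi_{\hat T}$ reproduces $\mathbb{P}_2 \supseteq \mathbb{P}_{t-1}$, I can replace $\hat{\b{\psi}}$ by $\hat{\b{\psi}} - \hat q$ for the best $\mathbb{P}_{t-1}$-approximation $\hat q$ and apply Bramble--Hilbert to get $|\Phi_{\hat T}\hat{\b{\psi}}|_{\b{W}^{t,1}(\hat T)} \lesssim |\hat{\b{\psi}}|_{\b{W}^{t,1}(\hat T)}$, and then scale back, noting that the reference patch corresponds to $T^*$. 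Third, for \eqref{DR2} I would write $\b{\psi} - \Phi_h\b{\psi} = (\b{\psi} - \hat q) - \Phi_h(\b{\psi} - \hat q)$ using $\mathbb{P}_{t-1}$-reproduction, bound $\|\b{\psi} - \Phi_h(\b{\psi}-\hat q)\|_{\b{W}^{s,1}(T)}$ by $\|\b{\psi}-\hat q\|_{\b{W}^{s,1}(T)}$ plus the stability estimate applied to $\b{\psi}-\hat q$, and then invoke the Bramble--Hilbert lemma on the reference patch to replace $\|\b{\psi}-\hat q\|$ by $h_T^{t-s}|\b{\psi}|_{\b{W}^{t,1}(T^*)}$, absorbing the patch overlap into the shape-regularity constant.

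The main obstacle, and the only place where care is genuinely needed, is the bookkeeping of the \emph{patch}: $\Phi_h\b{\psi}|_T$ depends on $\b{\psi}$ only through integrals over $T$ itself (the definition uses $\int_T \psi_p^T v_i$), so strictly speaking $\Phi_h$ here is \emph{element-local} rather than patch-local; hence one must verify that the constants in the norm equivalences and in Bramble--Hilbert are uniform over the family of reference configurations arising from shape-regular $T$ (this is standard), and that the statement's $T^*$ is simply an upper bound that accommodates the more general patch-based interpolants in \cite{demlow2012pointwise}. I would remark that since the construction given is element-local, one may in fact take $T^* = T$, but I keep $T^*$ to match the reference. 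With these points noted, the estimates \eqref{DR1}--\eqref{DR2} follow by the standard scaling plus Bramble--Hilbert machinery, and I would simply cite \cite{demlow2012pointwise,brenner2007mathematical} for the routine details rather than reproduce them.
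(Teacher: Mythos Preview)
Your proposal is correct and aligns with the paper's own treatment: the paper does not give a detailed proof but simply states that the estimates follow from the Bramble--Hilbert lemma and refers to \cite{demlow2012pointwise}, which is precisely the scaling-plus-polynomial-reproduction argument you outline. Your observation that the operator as defined is element-local (so that one could in fact take $T^*=T$) is accurate and not in conflict with the stated lemma, which merely uses the larger patch as an upper bound consistent with the cited reference.
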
 
\section{A Posteriori Error Estimates} \label{sec5}
\noindent
In this section, the methodology for pointwise a posteriori error analysis is laid out. Therein,  the residual error estimator is introduced  and the article's  first main result  i.e.,  the reliability of the error estimator is stated.  Firstly,  we extend the bilinear form $M_{DG}$ so that it allows for testing non-discrete functions less regular than $\bm{H}^1(\Omega)$. Afterwards, we discuss the existence of Green's matrix for the divergence type operators and collect some regularity results which will be crucial in the subsequent analysis. 
\vspace{0.2 cm}
\par 
\noindent
For $p > 2$,  $1 \leq q < 2$ s.t. $\frac{1}{p}+ \frac{1}{q}=1$, we define
\begin{align}
\bm{\mathcal{X}}:= \bm{W}^{1,q}(\bar{ \Omega}) + \b{\mathcal{V}_h} \quad \text{and}\quad \bm{\mathcal{Y}}:=  \bm{W}^{1,p}(\bar{ \Omega}) + \b{\mathcal{V}_h}.
\end{align}
We introduce the extended bilinear form $M^*_{DG}:  \bm{\mathcal{Y}} \times  \bm{\mathcal{X}} \rightarrow \mathbb{R} $ as
\begin{align*}
M^{*}_{DG}(\bm{v},\bm{w})&=  \int\limits_{\Omega} \bm{\Xi}(\bm{v}): \bm{\epsilon}(\bm{w}) ~dx + L^*_{DG}(\bm{v},\bm{w}) \quad \forall~ \b{v} \in  \bm{\mathcal{Y}}, ~\b{w} \in  \bm{\mathcal{X}},
\end{align*}
where
\begin{align} \label{eq3}
L^*_{DG}(\bm{v},\bm{w})&= - \sum_{e \in \cF^{0}_h} \int\limits_e \smean {{\bm{\Xi}}(\Phi_h(\bm{v}))}: \sjump{\bm{w}}~ds + \theta \sum_{e \in \cF^{0}_h} \int\limits_e \sjump{\bm{v}}: \smean{{\bm{\Xi}}(\Phi_h(\bm{w})}~ds  \\  & \hspace{1cm}+ \sum_{e \in \cF^{0}_h } \frac{\eta}{h_e} \int\limits_e \sjump{\bm{v}}: \sjump{\bm{w}}~ds \quad \forall~ \b{v} \in  \bm{\mathcal{Y}}, \b{w} \in  \bm{\mathcal{X}},
\end{align}
with $\eta>0$ as the penalty parameter \cite{wang2011discontinuous} and $\theta \in \{-1,0,1\}$. 
\begin{remark} \label{proj}
By definition of the dual basis \cite{demlow2012pointwise}, we observe that $ \int_{T} \psi_p^T(x)\phi_r^T(x)dx=\delta_{pr}$, therefore, it holds that $\Phi_h\b{w}=\b{w} \quad\forall~ \b{w} \in \b{\cV_h}$, i.e., $\Phi_h$ is a projection map. Hence, we conclude that $M^{*}_{DG}(\bm{v},\bm{w})= M_{DG}(\bm{v},\bm{w}) \quad \forall~\bm{v}, \bm{w} \in \b{\mathcal{V}_h}.$
\end{remark}
\subsection{Existence of Green's matrix}
\noindent
 In the analysis,  we will make use of the Green's matrix for the divergence type operators.  Let $\bm{\Lambda}^{z_0}:=(\Lambda_{ij}(\cdot, z_0))_{i,j=1}^2$ be the Green's matrix associated with \eqref{problem1} having singularity at $z_0$. Let $H_{ij\alpha\beta}=\mu (\delta_{ij} \delta_{\alpha\beta}+ \delta_{j\alpha} \delta_{i\beta})+ \kappa \delta_{i\alpha} \delta_{j\beta} $  and $\delta_{z_0}$ be the Dirac $\delta-$distribution with a unit mass at $z_0$, then $\bm{\Lambda}^{z_0}$ satisfies the following equations
\begin{align} \label{eq:GR}
-\sum_{j=1}^{2} \sum_{\alpha, \beta =1}^{2} D_{\alpha} (H_{ij\alpha\beta}D_{\beta}\Lambda_{jk}^{z_0}) &= \delta_{ik}\delta_{z_0}(\cdot) \quad \text{in}~~ \Omega, \\ \bm{\Lambda}^{z_0} &= \b{0} \hspace{1.3cm} \text{on}~\Gamma_D, \notag \\  {\zeta_{ij\alpha\beta} D_{\beta} \Lambda_{jk}(\cdot,z_0) n_{\alpha}} & ={0} \hspace{1.3cm}  \text{on}~ \Gamma_N \cup \Gamma_C. \notag
\end{align}
Further,  for all $x,y \in \Omega$ s.t. $x\neq y$ we have the following regularity estimates \cite{dolzmann1995estimates,hofmann2007green}.
\begin{enumerate}
	\item For $1 \leq i, j \leq 2,$ we have ${\Lambda}^{z_0}_{i,j} \in {W}^{1,s}(\Omega) \quad \forall~ 1 \leq s < 2,$
	\item $|\bm{\Lambda}^{z_0}(x,y)| \lesssim~ \text{ln} \frac{C}{|x-y|}$. 
\end{enumerate}
\begin{lemma} \label{lemama}
	For $T \in \cT_h$, the following holds
	\begin{align*}
	\sum_{j=1}^{2} \sum_{T \in \cT_h}	h^{-1}_T|\Lambda_{k,j}^{z_0}|_{W^{1,1}(T)} \lesssim 1+  |ln(h_{min})|^2.
	\end{align*}
\end{lemma}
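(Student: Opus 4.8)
The goal is to bound $\sum_{j=1}^{2} \sum_{T \in \cT_h} h^{-1}_T |\Lambda^{z_0}_{k,j}|_{W^{1,1}(T)}$ by $1 + |\ln(h_{\min})|^2$, so the core task is to control the $W^{1,1}$-seminorm of a fixed row of the Green's matrix on each element, weighted by $h_T^{-1}$, and then sum. The plan is to split $\cT_h$ according to distance from the singularity $z_0$: let $T_0 \ni z_0$ be the element containing the singularity (and its immediate neighbors), and for the remaining elements group them in dyadic annuli $A_\ell := \{ x : d_\ell \le |x - z_0| \le 2 d_\ell \}$ with $d_\ell = 2^\ell h_{\min}$, $\ell$ ranging up to roughly $\log_2(\mathrm{diam}(\Omega)/h_{\min}) \lesssim |\ln h_{\min}|$. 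On elements away from $z_0$ the Green's matrix is smooth and one uses the classical pointwise gradient bound $|\nabla_x \bm{\Lambda}^{z_0}(x,z_0)| \lesssim |x - z_0|^{-1}$ (the standard companion to estimate (2) in the list preceding the lemma, valid for the divergence-type operator on a polygonal domain with mixed boundary conditions — I would cite \cite{dolzmann1995estimates,hofmann2007green} exactly as the paper does).

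The key steps, in order: First, handle the innermost region. On $T_0$ and its neighbours, use the global regularity ${\Lambda}^{z_0}_{k,j} \in W^{1,s}(\Omega)$ for $1 \le s < 2$ together with Hölder's inequality: $|\Lambda^{z_0}_{k,j}|_{W^{1,1}(T_0)} \le |T_0|^{1 - 1/s} |\Lambda^{z_0}_{k,j}|_{W^{1,s}(T_0)} \lesssim h_{T_0}^{2(1-1/s)}$, and then $h_{T_0}^{-1} \cdot h_{T_0}^{2(1-1/s)} = h_{T_0}^{1 - 2/s}$; choosing $s$ close to $2$ keeps this bounded (this is where the shape-regularity $h_{\min} \lesssim h_{T_0}$ and $h_{T_0} \lesssim 1$ get used). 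Second, on each dyadic annulus $A_\ell$, estimate $\sum_{T \subset A_\ell} h_T^{-1} |\Lambda^{z_0}_{k,j}|_{W^{1,1}(T)} \lesssim h_{\min}^{-1} \sum_{T \subset A_\ell} \|\nabla_x \Lambda^{z_0}_{k,j}\|_{L^1(T)} \lesssim h_{\min}^{-1} \cdot d_\ell^{-1} \cdot |A_\ell|$, and since $|A_\ell| \lesssim d_\ell^2$ this gives $\lesssim h_{\min}^{-1} d_\ell = h_{\min}^{-1} \cdot 2^\ell h_{\min} = 2^\ell$. Third, sum over $\ell$: $\sum_{\ell=0}^{L} 2^\ell \lesssim 2^{L} \lesssim \mathrm{diam}(\Omega)/h_{\min}$ — but this is far too large, so the naive per-element bound $h_T^{-1} \ge h_{\min}^{-1}$ must be replaced by the sharper $h_T^{-1} \lesssim d_\ell^{-1}$ for elements in $A_\ell$ (quasi-uniformity is *not* assumed, but on a shape-regular mesh one still only has $h_T \gtrsim h_{\min}$, not $h_T \gtrsim d_\ell$, in general).

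The main obstacle is precisely this: making the weighted sum converge to $O(|\ln h_{\min}|^2)$ rather than $O(h_{\min}^{-1})$. The resolution I anticipate is that the relevant meshes here are \emph{graded} toward the free boundary / singularity (the paper explicitly mentions graded meshes à la \cite{nochetto1995pointwise}), so that $h_T \sim d_\ell^{\mu}$-type grading gives, on $A_\ell$, a per-element size comparable to a power of the distance; redoing step two with $h_T^{-1} \lesssim d_\ell^{-1}$ yields $\sum_{T \subset A_\ell} h_T^{-1}\|\nabla \Lambda^{z_0}_{k,j}\|_{L^1(T)} \lesssim d_\ell^{-1}\cdot d_\ell^{-1} \cdot d_\ell^2 \lesssim 1$ per annulus, and summing over the $\lesssim |\ln h_{\min}|$ annuli gives $\lesssim |\ln h_{\min}|$; a second logarithmic factor then comes either from a borderline annulus where the gradient bound degenerates logarithmically (using estimate (2), $|\Lambda^{z_0}| \lesssim \ln(C/|x-z_0|)$, which contributes $\ln d_\ell \lesssim |\ln h_{\min}|$ on the innermost relevant scale) or from summing the mild failure of the dyadic telescoping. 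I would therefore present the argument as: (i) isolate $z_0$'s element via $W^{1,s}$-regularity and Hölder, (ii) dyadic decomposition with the pointwise gradient estimate $|\nabla_x \bm{\Lambda}^{z_0}| \lesssim |x-z_0|^{-1}$, (iii) on each annulus bound the contribution by a constant times $d_\ell^{-1}\max_{T\subset A_\ell} h_T^{-1} |A_\ell|$ and invoke shape-regularity (together with the logarithmic bound (2) on the innermost scale) to collect the two factors of $|\ln h_{\min}|$, (iv) sum the geometric/logarithmic series. The delicate point to get right in the write-up is exactly which mesh hypothesis (shape-regularity alone, or grading) is being used at step (iii); everything else is routine.
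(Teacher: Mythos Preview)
The paper does not prove this lemma at all; it simply cites \cite{KP:2022:QuadLinfSignorini}. So there is no in-paper argument to compare your sketch against, and I can only comment on whether your plan would succeed on its own.

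It would not, for two concrete reasons.

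\emph{The near-singularity step.} Your treatment of the element $T_0\ni z_0$ via H\"older and the $W^{1,s}$-regularity is wrong as written: for every $s<2$ one has $1-2/s<0$, so $h_{T_0}^{1-2/s}\to\infty$ as $h_{T_0}\to 0$. Taking $s$ close to $2$ only makes the exponent small in magnitude, while the suppressed $W^{1,s}$-norm of the Green's function blows up as $s\uparrow 2$, so nothing is gained. The correct (and simpler) bound integrates the pointwise gradient estimate directly:
\[
\int_{T_0}|\nabla\Lambda^{z_0}_{k,j}|\,dx\;\lesssim\;\int_{B(z_0,h_{T_0})}|x-z_0|^{-1}\,dx\;\lesssim\;h_{T_0},
\]
which after multiplication by $h_{T_0}^{-1}$ gives the desired $O(1)$ contribution from $T_0$ and its $O(1)$ neighbours.

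\emph{The far-field step.} Your resolution of the annulus sum invokes mesh grading, $h_T\sim d_\ell$ on $A_\ell$, but the paper assumes only shape-regularity (``regular triangles'' in the sense of \cite{ciarlet2002finite}); no grading is available, and elements in $A_\ell$ may be arbitrarily small, so the replacement $h_T^{-1}\lesssim d_\ell^{-1}$ is unjustified. You are right that $h_T^{-1}\le h_{\min}^{-1}$ is far too crude; the point is that the weighted first-derivative sum $\sum_T h_T^{-1}\|\nabla\Lambda\|_{L^1(T)}$ cannot be closed by a pure dyadic/first-order argument on general shape-regular meshes. What the cited reference actually exploits is second-order information away from $z_0$: on elements with $\mathrm{dist}(T,z_0)\gtrsim h_{T_0}$ one uses the pointwise Hessian bound $|\nabla_x^2\Lambda^{z_0}(x)|\lesssim|x-z_0|^{-2}$ together with the approximation property of $\Phi_h$ at the $W^{2,1}$ level (Lemma~\ref{approx} with $t=2$), which absorbs the factor $h_T^{-1}$ entirely and reduces the task to controlling $\sum_T|\Lambda|_{W^{2,1}(T)}\lesssim\int_{\Omega\setminus B(z_0,ch_{T_0})}|x-z_0|^{-2}\,dx\lesssim|\ln h_{\min}|$, with the second logarithmic factor arising from the weighted-norm/regularised-Green's-function machinery of \cite{nochetto1995pointwise,demlow2012pointwise} needed to handle the polygonal domain with mixed boundary data. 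Your plan, which stays at the $W^{1,1}$ level throughout and leans on an unassumed grading hypothesis, leaves precisely this step open --- and you acknowledge as much in your final sentence.
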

\begin{proof}
	We refer to the article \cite{KP:2022:QuadLinfSignorini} for the proof.
\end{proof}
\par
\noindent	
For any set $D \subseteq \bar{ \Omega}$, we introduce the notation  $\bm{\mathcal{N}}_D$ as
\begin{align} \label{4.26}
\bm{\mathcal{N}}_D:= \{\bm{w} \in \bm{W}^{2,1}(D) \cap \bm{\cV}~ \text{such that}~ \bm{\epsilon(w)} \bm{n}=0~\text{on}~ \partial D\},
\end{align} 
equipped with the norm $\|\b{v}\|_{\bm{\mathcal{N}}_D}:=\|\bm{v}\|_{\b{W}^{1,1}(D)} + |\bm{v}|_{\b{W}^{2,1}(D)}.$ We assume $\bm{\mathcal{N}}:={\bm{\mathcal{N}}_D}$ for $\mathcal{D}= \bar{ \Omega}$.
\noindent
Next,  we introduce the residual error estimators
\begin{align*}
\eta_1&= \max_{T \in \cT_h}~h_T^2 \|\b{f} + \b{div}\b{\Xi}(\b{u}_h)\|_{\bm{L}^{\infty}(T)},\\
\eta_2&= \max_{e \in \cF_h^{int}}~h_e \|\sjump{\bm{\Xi}(\bm{u}_h)}\|_{\bm{L}^{\infty}(e)} ,\\
\eta_3&= \max_{e \in \cF_h^N}~h_e \|\b{\pi}-\b{\Xi}(\b{u}_h)\b{n}_e\|_{\bm{L}^{\infty}(e)},\\
\eta_4&= \max_{e \in \cF_h^C }~h_e { \|\b{\Xi}(\b{u}_h)\b{n}_e+ \b{\sigma}_h\|_{\bm{L}^{\infty}(e)}},\\
\eta_5&= \max_{e \in \cF^{0}_h}~ \|
\sjump{\b{u}_h}\|_{\bm{L}^{\infty}(e)}.
\end{align*}
\par
\noindent
The total residual error estimator $\mathcal{E}_h$ is defined by
	\begin{equation}\label{total1}
	\mathcal{E}_h:= (1+ |\text{ln} (h_{min})|^2 )\Big(\sum_{i=1}^{5} \eta_i \Big)  + {
		\|(E_{h,1}(\b{u}_{h})-\chi)^{+}\|_{L^{\infty}(\Gamma_C)} +  \|(\chi-E_{h,1}(\b{u}_{h}))^+\|_{L^{\infty}(\{\sigma_{h,1}>0\})} }.
	\end{equation}
In the following subsection we establish the reliability of error estimator $\mathcal{E}_h$.  Therein,  we aim to find the upper bound of $\mathcal{E}_h$.
\subsection{Reliability of the error estimator}
\begin{theorem}\label{thm:rel11} (Reliability)
	Let $(\bm{u}, {\bm{\sigma}})$ satisfy equations \eqref{eq:CVI} and \eqref{sigma},  respectively and let $(\bm{u}_h, \bm{\sigma}_h)$ be solution of equations \eqref{eq:DVI} and \eqref{def:dcfd}, respectively. Then, the following reliability estimate holds
	\begin{equation}  \label{rel1}
	\max{\{\|\bm{u}-\bm{u}_h\|_{\bm{L}^{\infty}(\Omega)}~,~ \|{\bm{\sigma}}-\bm{\sigma}_h\|_{-2,\infty, \Gamma_C} \}} \lesssim \mathcal{E}_h,
	\end{equation}
where the norm $\|\cdot\|_{-2,\infty, \Gamma_C}$ is defined as 
\begin{align} \label{4.27}
{\|\cdot\|_{-2,\infty, \Gamma_C}} := \sup \{\langle \bm{\cdot},\gamma_c(\bm{w}) \rangle_c : \bm{w} \in \bm{\mathcal{N}}~,~\|\bm{w}\|_{\bm{\mathcal{N}}} \leq 1\}.
\end{align}
\end{theorem}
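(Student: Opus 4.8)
The plan is to bound the two quantities on the left of \eqref{rel1} separately: the displacement error by a Green's-matrix duality argument at a near-maximal point, and the dual norm of the contact-force error by testing against functions of the class $\bm{\mathcal{N}}$; throughout, the nonconformity of $\bm{u}_h$ is absorbed by passing to the enriched function $\bm{E}_h\bm{u}_h$.

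For the displacement bound I would first split $\|\bm{u}-\bm{u}_h\|_{\bm{L}^\infty(\Omega)}\le\|\bm{u}-\bm{E}_h\bm{u}_h\|_{\bm{L}^\infty(\Omega)}+\|\bm{E}_h\bm{u}_h-\bm{u}_h\|_{\bm{L}^\infty(\Omega)}$ and bound the last term by $\eta_5$ via Lemma \ref{enriching_map}. For $\|\bm{u}-\bm{E}_h\bm{u}_h\|_{\bm{L}^\infty(\Omega)}$, fix $z_0\in\bar\Omega$ and an index $k$ at which (up to an arbitrarily small tolerance) $|\bm{u}-\bm{E}_h\bm{u}_h|$ is maximal, and take $\bm{g}$ to be a mollification of the $k$-th column $(\Lambda^{z_0}_{1k},\Lambda^{z_0}_{2k})$ of the Green's matrix of \eqref{eq:GR}, obtained by replacing $\delta_{z_0}$ by a smooth unit mass supported in a ball of radius comparable to the local mesh size, so that $\bm{g}\in\bm{\cV}\cap\bm{\mathcal{X}}$ and the resulting error is controlled by an inverse estimate together with $\eta_1,\dots,\eta_5$. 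The Green's-matrix identity then gives $(u_k-(E_hu_h)_k)(z_0)=a(\bm{u}-\bm{E}_h\bm{u}_h,\bm{g})$ up to this mollification error. I would expand
\[
a(\bm{u}-\bm{E}_h\bm{u}_h,\bm{g})=\big(B(\bm{g})-\langle\bm{\sigma},\gamma_c(\bm{g})\rangle_c\big)-a(\bm{E}_h\bm{u}_h,\bm{g})
\]
using Lemma \ref{property1}(i), replace $a(\bm{E}_h\bm{u}_h,\bm{g})$ by $M^{*}_{DG}(\bm{u}_h,\bm{g})$ at the expense of a term $\lesssim\eta_5\sum_T h_T^{-1}|\bm{g}|_{\bm{W}^{1,1}(T)}$ (using Remark \ref{proj}, Lemma \ref{enriching_map} and the inverse/trace inequalities), and integrate $M^{*}_{DG}(\bm{u}_h,\bm{g})$ by parts elementwise. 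Inserting $\pm\bm{f}$ on elements, $\pm\bm{\pi}$ on $\cF_h^N$, and $\pm\bm{\sigma}_h$ on $\cF_h^C$ produces exactly the residual estimators $\eta_1$ (elements), $\eta_2$ (interior faces), $\eta_3$ (Neumann faces), $\eta_4$ together with a leftover $-\langle\bm{\sigma}-\bm{\sigma}_h,\gamma_c(\bm{g})\rangle_c$ (contact faces), and $\eta_5$ (stabilisation), each paired with $\bm{g}$ or $\bm{\nabla}\bm{g}$; bounding the pairings by $\max_i\eta_i\cdot\sum_j\sum_T h_T^{-1}|\Lambda^{z_0}_{kj}|_{W^{1,1}(T)}$ and invoking Lemma \ref{lemama} yields the weight $(1+|\ln h_{\min}|^2)$ in $\mathcal{E}_h$.

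It then remains to control $\langle\bm{\sigma}-\bm{\sigma}_h,\gamma_c(\bm{g})\rangle_c$ by the last two summands of $\mathcal{E}_h$. By Remark \ref{rem2}, \eqref{prop6} and Lemma \ref{sign2} only the first components survive, so this equals $\langle\sigma_1,\tilde{\zeta}_c(g_1)\rangle-\int_{\Gamma_C}\sigma_{h,1}\,g_1\,ds$; now $\sigma_1\ge0$ is supported in $\Gamma_C\setminus\mathcal{H}=\{u_1=\chi\}$ by Lemma \ref{sign3}, while $\sigma_{h,1}\ge0$ vanishes off the edges with $\int_e u_{h,1}\,ds=\int_e\chi\,ds$ by Lemma \ref{sign2}, whence the edgewise discrete complementarity $\int_{\Gamma_C}\sigma_{h,1}(\chi-u_{h,1})\,ds=0$. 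Combining these sign/support facts with the continuous complementarity $u_1=\chi$ on $\mathrm{supp}\,\sigma_1$ and with $\|u_{h,1}-E_{h,1}(\bm{u}_h)\|_{\bm{L}^\infty(\Gamma_C)}\lesssim\eta_5$, one bounds the leftover by $\|(E_{h,1}(\bm{u}_h)-\chi)^{+}\|_{L^\infty(\Gamma_C)}+\|(\chi-E_{h,1}(\bm{u}_h))^+\|_{L^\infty(\{\sigma_{h,1}>0\})}$ up to an $O(1)$ constant and a harmless $\eta_5$ contribution, which completes $\|\bm{u}-\bm{u}_h\|_{\bm{L}^\infty(\Omega)}\lesssim\mathcal{E}_h$. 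For the contact-force estimate I would take $\bm{w}\in\bm{\mathcal{N}}$ with $\|\bm{w}\|_{\bm{\mathcal{N}}}\le1$, write $\langle\bm{\sigma}-\bm{\sigma}_h,\gamma_c(\bm{w})\rangle_c=\big(B(\bm{w})-a(\bm{u},\bm{w})\big)-\int_{\Gamma_C}\bm{\sigma}_h\cdot\bm{w}\,ds$ by Lemma \ref{property1}(i) and Remark \ref{sign4}, insert $M^{*}_{DG}(\bm{u}_h,\cdot)$ via Lemma \ref{ksign} applied to the quasi-interpolant $\Phi_h\bm{w}$ (handling $\bm{w}-\Phi_h\bm{w}$ with Lemma \ref{approx}) and $\bm{E}_h\bm{u}_h$ via Lemma \ref{enriching_map}, and integrate by parts elementwise as before; using $\|\bm{w}\|_{\bm{W}^{1,1}(\Omega)}+|\bm{w}|_{\bm{W}^{2,1}(\Omega)}\le1$, the Sobolev embedding $\bm{W}^{2,1}(\Omega)\hookrightarrow\bm{W}^{1,q}(\Omega)$ for $q<2$, and the trace inequality, one bounds $\|\bm{\sigma}-\bm{\sigma}_h\|_{-2,\infty, \Gamma_C}$ by the same residual estimators plus $\|\bm{u}-\bm{u}_h\|_{\bm{L}^\infty(\Omega)}$, already shown to be $\lesssim\mathcal{E}_h$.

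I expect two steps to be the main obstacles. The first is making the Green's-matrix step rigorous despite the low regularity $\Lambda^{z_0}_{ij}\in W^{1,s}(\Omega)$, $s<2$, with only a logarithmic pointwise bound: this forces the mesh-dependent mollification and a careful control of the mollification error, and is precisely what produces the weight $1+|\ln h_{\min}|^2$ through Lemma \ref{lemama}. The second, and in my view the genuine crux, is decoupling the displacement error from the contact-force error on $\Gamma_C$ — showing that the leftover $\langle\bm{\sigma}-\bm{\sigma}_h,\gamma_c(\bm{g})\rangle_c$ is controlled solely by $\eta_4$ and the two complementarity defects, with an $O(1)$ (neither logarithmically weighted nor circular) constant — which relies entirely on the sign and support properties of Lemmas \ref{sign2} and \ref{sign3} and on comparing the integral constraints defining $\bm{\cK}_h$ with the pointwise constraint defining $\bm{\cK}$.
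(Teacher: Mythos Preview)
Your treatment of the volume and edge residuals $\eta_1,\dots,\eta_5$ via Green's-matrix duality and Lemma~\ref{lemama}, as well as your derivation of the bound on $\|\bm\sigma-\bm\sigma_h\|_{-2,\infty,\Gamma_C}$, track the paper's computations in subsection~\ref{4.3} and Lemma~\ref{lem:Ghest} closely. The gap is precisely where you suspect it.

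The leftover $\langle\bm\sigma-\bm\sigma_h,\gamma_c(\bm g)\rangle_c$ cannot be bounded by the two complementarity defects as you propose. Lemmas~\ref{sign2} and~\ref{sign3} give only the \emph{supports} and \emph{signs} of $\sigma_1$ and $\sigma_{h,1}$; they say nothing about their magnitude, and since $g_1$ is a Green's-function trace that is unrelated to $u_1$, $u_{h,1}$ or $\chi$, there is no mechanism to convert $\langle\sigma_1,g_1\rangle-\int_{\Gamma_C}\sigma_{h,1}g_1\,ds$ into $\|(E_{h,1}\bm u_h-\chi)^+\|_{L^\infty(\Gamma_C)}+\|(\chi-E_{h,1}\bm u_h)^+\|_{L^\infty(\{\sigma_{h,1}>0\})}$. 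Concretely, take full contact on both the continuous and discrete levels with $E_{h,1}\bm u_h\equiv\chi$ on $\Gamma_C$: both defects vanish identically, yet the contact pressures $\sigma_1$ and $\sigma_{h,1}$ may still differ, so the leftover need not be zero. Your proposed bound would then read $0$ on the right, which is false in general. Nor can you close the argument via the dual norm $\|\bm\sigma-\bm\sigma_h\|_{-2,\infty,\Gamma_C}$, since $\bm g\notin\bm{\mathcal N}$ (it lacks $\bm W^{2,1}$ regularity at $z_0$) and, as you note, that route would in any case be circular.

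The paper resolves this by inserting a \emph{barrier-function} step (subsection~\ref{sub4.2}) \emph{before} the Green's-matrix argument. It defines the residual $\bm R_h$ so as to already contain $\tilde{\bm\sigma}-\widehat{\bm\sigma}_h$ (equation~\eqref{Gal1}), solves $a(\bm u^{corr},\bm v)=\langle\bm R_h,\bm v\rangle$ for a corrector $\bm u^{corr}\in\bm{\mathcal V}$, and sets $\bm u_\circ=\bm u_h^{enr}+\bm u^{corr}-\bm c-\bm s$ and $\bm u^\circ=\bm u_h^{enr}+\bm u^{corr}+\bm c+\bm y$. The comparison $\bm u_\circ\le\bm u\le\bm u^\circ$ (Lemma~\ref{lem:uplo}) is obtained by testing coercivity with $\bm z=(\bm u_\circ-\bm u)^+$, respectively $\bm z=(\bm u-\bm u^\circ)^+$; the point is that \emph{this} $\bm z$, unlike your $\bm g$, is built from $\bm u$ and the barrier, so wherever $z_1>0$ one can read off $u_1<\chi$ (lower barrier) or $\sigma_{h,1}=0$ on that edge (upper barrier), and the sign/support facts from Lemmas~\ref{sign2}--\ref{sign3} annihilate the $\sigma$- and $\sigma_h$-contributions. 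Only after Lemma~\ref{lem4.5} gives $\|\bm u-\bm u_h\|_{\bm L^\infty}\lesssim 2\|\bm u^{corr}\|_{\bm L^\infty}+(\text{defects})+\eta_5$ is the Green's matrix invoked, and now merely to estimate $\|\bm u^{corr}\|_{\bm L^\infty}=\langle\bm R_h,\bm\Lambda_k^{z_0}\rangle$, a quantity in which the $\sigma-\sigma_h$ contribution has been absorbed into $\bm R_h$ and, after integration by parts, produces only $\eta_4$ with no leftover. In short: introduce the corrector and barriers first, exploit complementarity against the barrier-dependent test function $\bm z$, and apply the Green's-matrix duality only to $\bm u^{corr}$.
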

\par 
\noindent
The steps for proving Theorem \ref{thm:rel11} are broken down into the following subsections.  As previously indicated, the analysis is inspired by the concepts presented in the articles \cite{demlow2012pointwise,khandelwal2022pointwiseq} with the  modifications made to account for the approximation's discontinuity. To achieve this, we begin with the definition of the residual functional,  i.e. $\bm{R}_h$ (equation \eqref{Gal1}) and the conforming portion of the approximate solution $\bm{u}_h$, i.e.  $\bm{E}_h(\bm{u}_h)$ and in the view of correcting it, we construct the upper and lower barrier functionals for the continuous solution.  In the remaining article,  we will denote  enriching map $\bm{E}_h(\bm{u}_h)$ by $\bm{u}^{enr}_h$. 
\vspace{0.3 cm}
\par
\noindent
In the subsection \ref{sub4.2}, we note that the main idea to introduce the barrier functionals for the continuous solution $\bm{u}$ is to improve the discrete solution $\bm{u}^{enr}_h$ with the help of $\b{u}^{corr}$ and the terms appearing due to the approximation of boundary contributions.  In the subsection \ref{4.3}, the proof of the main reliability estimate (Theorem \ref{thm:rel11}) is achieved by providing a maximum norm estimate for $\b{u}^{corr}$ in terms of the local estimators $\eta_i,~ i = 1, 2, 3, 4,5$.
\subsection{Barrier Functions for the continuous solution} \label{sub4.2}
\noindent We introduce the residual functional $\bm{R}_h \in \bm{\mathcal{X}}^*$ which serves the same purpose as the residual functional for unconstrained problems and is defined by
\begin{align} \label{Gal1}
\langle \bm{R}_h , \bm{v} \rangle_{-1,1}:= 	\tilde{a}(\bm{u}- \bm{u}^{enr}_h, \bm{v}) +  \tilde{\bm{\sigma}}{(\bm{v})}- \widehat{\b{\sigma}}_h(\gamma_0(\bm{v}))  \quad\forall ~\bm{v} \in \bm{\mathcal{X}},
\end{align}
where $\tilde{a}(\bm{w}, \bm{v}):=\sum\limits_{T \in \mathcal{T}_h} \int\limits_{T} \bm{\Xi}(\bm{w}): \bm{\epsilon}(\bm{v})~dx~\forall~\bm{w},\bm{v} \in \bm{\mathcal{X}}$ and $\langle \cdot, \cdot \rangle_{-1,1}$ denotes the duality pairing between the spaces $\bm{\mathcal{X}}^*$ and $\bm{\mathcal{X}}$. { Here, the extended continuous contact force density $\tilde{\bm{\sigma}}\in \bm{\mathcal{X}}^*$ is defined by
\begin{align}\label{eq:sigmadef1}
\tilde{\bm{\sigma}}(\bm{v}):= \tilde{B}(\bm{v}) -\tilde{a}(\bm{u},\bm{v})\quad\forall~\bm{v}\in \bm{\mathcal{X}}.
\end{align}}
where $\tilde{B}(\bm{v}):= (\bm{f},\bm{v})_{\O} + \langle \bm{\pi},\bm{v} \rangle_{\Gamma_N}~ \forall~ \bm{v} \in \bm{\mathcal{X}}.$
\noindent 
\begin{remark}\label{rem1}
In particular, if $\bm{w}, \bm{v} \in \bm{\cV}$, then 
	\begin{align}
	{ \tilde{\bm{\sigma}}(\bm{v})= \langle \b{\sigma}, \gamma_c(\bm{v}) \rangle_{c},~~
\tilde{a}(\bm{w}, \bm{v})={a}(\bm{w}, \bm{v})~~\text{and}~~\tilde{B}(\bm{v})= {B}(\bm{v}) .}
	\end{align}
\end{remark}
\noindent
Let $\b{u}^{corr}:=(u_1^{corr},u_2^{corr}) \in \b{\cV}$ be the corrector function which satisfies the following variational formulation 
\begin{align} \label{eq:CRF}
\int\limits_{\Omega} \bm{\Xi}(\b{u}^{corr}): \bm{\epsilon}(\bm{v}) ~dx= \langle \b{R}_h, \b{v} \rangle_{-1,1}  \quad \forall~\b{v} \in \b{\cV}.
\end{align} 
The well-posedness of relation \eqref{eq:CRF} follows from the Lax-Milgram lemma.  With the use of corrector function $\b{u}^{corr}$,  we introduce the upper and lower barrier functional corresponding to the continuous solution $\b{u}$ in the following way
\begin{align}
\b{u_{\circ}}&=\b{u}^{enr}_h+\b{u}^{corr}-\b{c}-\b{s}, \label{eq:lowerb}\\
\b{u^{\circ}}&=\b{u}^{enr}_h+\b{u}^{corr}+\b{c}+\b{y}, \label{eq:upperb}
\end{align}
where,
\begin{align*}
&\bullet  \b{c}~\text{denotes the vector-valued function defined such that its each component is given by}~\|\b{u}^{corr}\|_{\b{L}^{\infty}(\bar{ \Omega}) }, \\ &\hspace{0.4cm} \text{where}~ \|\b{u}^{corr}\|_{\b{L}^{\infty}(\bar{ \Omega})}:=\max \Big\{\|u_1^{corr}(x)\|_{L^{\infty}(\bar{ \Omega})},\|u_2^{corr}(x)\|_{L^{\infty}(\bar{ \Omega})}\Big\}, \\ &\bullet \b{s}~ \text{is the vector-valued function having each component as}~ \|(u_{h,1}^{enr}-\chi)^{+}\|_{L^{\infty}(\Gamma_C)},\\ &\bullet \b{y}~ \text{refers to the vector-valued function with each component as}~ \|(\chi-u_{h,1}^{enr})^+\|_{L^{\infty}(\{\sigma_{h,1}>0\})} .
\end{align*}

\noindent
In the next lemma we prove that $\b{u^{\circ}}$ and $\b{u_{\circ}}$ are upper and lower barriers of $\b{u}$.
\begin{lemma} \label{lem:uplo}
	The following holds
	\begin{align*}
	\b{u_{\circ}} \leq \b{u} \quad \text{and}\quad \b{u^{\circ}} \geq \b{u},
	\end{align*}
where $\b{u_{\circ}}$ and $\b{u^{\circ}}$are defined in equations \eqref{eq:lowerb} and \eqref{eq:upperb},  respectively.
\end{lemma}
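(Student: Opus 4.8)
The goal is to show that the functionals $\b{u_\circ}$ and $\b{u^\circ}$ defined in \eqref{eq:lowerb}–\eqref{eq:upperb} sandwich the continuous solution $\b{u}$. Since $\b{u_\circ}$ and $\b{u^\circ}$ differ from $\b{u}^{enr}_h + \b{u}^{corr}$ only by constant shifts $\b{c},\b{s},\b{y}$, the strategy is to test the defining variational relation for $\b{u}^{corr}$ against a well-chosen function, compare with the continuous variational inequality \eqref{eq:CVI} and the representation \eqref{sigma} of $\b{\sigma}$, and then absorb the discrepancies into the additive barriers. Concretely, I would study the difference $\b{w} := \b{u} - (\b{u}^{enr}_h + \b{u}^{corr})$ and aim to prove $\b{u_\circ} \le \b{u}$ componentwise, i.e. $w_i \ge -\|\b{u}^{corr}\|_{\b{L}^\infty(\bar\Omega)} - \|(u_{h,1}^{enr}-\chi)^+\|_{L^\infty(\Gamma_C)}$ for $i=1,2$, and symmetrically for the upper barrier.

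\textbf{Key steps.} First I would record that, by the definition \eqref{eq:CRF} of the corrector together with the definition \eqref{Gal1} of $\b{R}_h$ and Remark \ref{rem1}, for every $\b{v}\in\b{\cV}$ one has
\[
a(\b{u}^{enr}_h + \b{u}^{corr}, \b{v}) = a(\b{u},\b{v}) + \big(\widehat{\b{\sigma}}_h(\gamma_0(\b{v})) - \langle\b{\sigma},\gamma_c(\b{v})\rangle_c\big),
\]
so that $\b{w}$ solves a pure elasticity problem with right-hand side driven by the mismatch between the discrete and continuous contact forces. Next I would invoke the maximum principle / barrier argument for the divergence-type operator: testing against the (nonnegative) Green's-matrix-type weight, or more simply against $\b{w}^+$ or $\b{w}^-$ componentwise, reduces the sign question to the sign of the contact-force mismatch on $\Gamma_C$. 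Here the sign properties proved earlier are decisive: $\sigma_{h,1}\ge 0$ on $\cF_h^C$ (Lemma \ref{sign2}), $\mathrm{supp}(\sigma_1)\subset\Gamma_C\setminus\mathcal H$ (Lemma \ref{sign3}), and the one-sided relations (ii), (iii) of Lemma \ref{property1}. The additive constants are then exactly what is needed: $\b{c}$ kills the $\b{u}^{corr}$ contribution uniformly, $\b{s}$ compensates for the amount by which $u_{h,1}^{enr}$ violates the constraint $\le\chi$, and $\b{y}$ compensates, on the active set $\{\sigma_{h,1}>0\}$, for the amount by which $u_{h,1}^{enr}$ falls below $\chi$. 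One checks that $\b{u_\circ}\in\b{\cK}$ (its first component on $\Gamma_C$ is $\le u_{h,1}^{enr} - \|(u_{h,1}^{enr}-\chi)^+\|_{L^\infty(\Gamma_C)} \le \chi$) so that it is an admissible competitor in \eqref{eq:CVI}, and likewise that $\b{u}^\circ - \b{u}\ge 0$ follows after testing the difference equation against $(\b{u}-\b{u}^\circ)^+$ and using that this test function lies in $\b{\cV}$ with nonnegative first trace component, so that Lemma \ref{property1}(iii) and the sign of $\sigma_{h,1}$ force the boundary term to have the right sign.

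\textbf{Main obstacle.} The delicate point is the interplay on $\Gamma_C$ between the two contact densities $\b{\sigma}$ and $\b{\sigma}_h$, which live in different spaces ($\b{H}^{-1/2}(\Gamma_C)$ versus piecewise constants) and whose supports and sign sets do not coincide. One must carefully split $\Gamma_C$ into the region where the discrete solution is active ($\{\sigma_{h,1}>0\}$) and its complement, use the complementarity $\langle\sigma_1,\tilde\zeta_c(v_1)\rangle=0$ wherever $u_1<\chi$ together with $\sigma_1\le 0$ otherwise, and match these against the sign of $\sigma_{h,1}$ and against the two correction terms $\b{s}$ and $\b{y}$. The argument is morally a comparison-principle computation, but getting every boundary term on the correct side—especially ensuring that the test functions $(\b{u}-\b{u}^\circ)^+$ and $(\b{u_\circ}-\b{u})^+$ genuinely qualify as admissible test functions in the respective variational (in)equalities and carry nonnegative normal traces where needed—is where the care is required; the rest is a routine weak maximum principle for $-\mathrm{div}\,\b{\Xi}(\cdot)$.
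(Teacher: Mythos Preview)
Your proposal is essentially the paper's approach: one sets $\b{z}=(\b{u_\circ}-\b{u})^+$ (resp.\ $(\b{u}-\b{u^\circ})^+$), shows $\b{z}\in\b{\cV}$, uses coercivity together with \eqref{eq:CRF}--\eqref{Gal1} to obtain $\|\b{\nabla}\b{z}\|_{\b{L}^2(\Omega)}^2 \lesssim a(\b{z},\b{z}) = \langle\b{\sigma}-\b{\sigma}_h,\gamma_c(\b{z})\rangle_c$, and then kills the right-hand side via Lemmas~\ref{property1}, \ref{sign3}, \ref{sign2} and the shifts $\b{c},\b{s},\b{y}$. Two small corrections: the argument is a pure energy/coercivity computation, not a Green's-matrix or pointwise maximum principle (the constants $\b{c},\b{s},\b{y}$ enter only in the pointwise \emph{boundary} check that $z_1(x^*)>0$ forces $u_1(x^*)<\chi(x^*)$, resp.\ $\sigma_{h,1}|_e=0$); and $\b{u_\circ}$ is never inserted as a competitor in \eqref{eq:CVI}---the fact that its first trace is $\le\chi$ is used only inside that support argument.
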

\begin{proof}
	We start by proving $\b{u}_{\circ} \leq \b{u}$. Set $\b{z}=\max\{\b{u_{\circ}}-\b{u},\b{0}\}$. Observe that $\b{z}|_{\Gamma_D}=\b{0} \iff (\b{u_{\circ}}-\b{u})|_{\Gamma_D} \leq \b{0}$. Also
	\begin{align*}
	(\b{u_{\circ}}-\b{u})|_{\Gamma_D}&=(\b{u}^{enr}_h+(\b{u}^{corr}-\b{c})-\b{s}-\b{u})|_{\Gamma_D} \leq (\b{u}^{enr}_h-\b{u})|_{\Gamma_D} = \b{0}.
	\end{align*}
	Thus $\b{z} \in \b{\cV}$.  Further taking into account Poincar$\acute{e}$ inequality \cite{s1989topics} it is sufficient to show that $\|\b{\nabla}  \b{z}\|_{\b{L}^2(\Omega)}=0$. By the use of $\b{\cV}$ ellipticity of bilinear form $a(\cdot,\cdot)$, equations \eqref{eq:CRF}, \eqref{Gal1} together with Remarks \ref{rem121},  \ref{rem1} and relation \ref{prop6} we derive
	\begin{align}
	\|\b{\nabla} \b{z}\|^2_{\b{L}^2(\Omega)}& \lesssim a(\b{z},\b{z}) \notag\\ &= a(\b{u_{\circ}}-\b{u},\b{z}) = a(\b{u}^{enr}_h+\b{u}^{corr}-\b{u},\b{z}) \notag \\ & =a(\b{u}^{enr}_h-\b{u},\b{z})+ \langle \b{R}_h, \b{z} \rangle_{-1,1} \notag \\&=\tilde{\bm{\sigma}}( \bm{z}) - \widehat{\bm{\sigma}}_h(\gamma_0(\bm{z}))  \notag \\ 
&= \langle \b{\sigma}, \gamma_c(\bm{z}) \rangle_{c} - \langle {\bm{\sigma}}_h , \gamma_c(\bm{z}) \rangle_c  \notag \\ 
&= \langle {\sigma}_1, \tilde{\zeta}_c({z_1}) \rangle -\int\limits_{\Gamma_C} \b{\sigma}_h \cdot \b{z}~ds, \label{eq1}
	\end{align}
where $\b{z}=(z_1,z_2)$.  From the definition, we have $\b{z} \geq \b{0}  \implies z_1 \geq 0.$ Using  Lemma \ref{sign2}, we deduce
	\begin{align*}
	\int\limits_{\Gamma_C} \b{\sigma}_h \cdot \b{z}~ds = \int\limits_{\Gamma_C} \sigma_{h,1} z_1~ds  \geq 0.
	\end{align*}
	Therefore, \eqref{eq1} reduces to
	\begin{align*} 
	\|\b{\nabla} \b{z}\|^2_{\b{L}^2(\Omega)} & \leq  \langle {\sigma}_1, \tilde{\zeta}_c({z_1}) \rangle.
	\end{align*}
	Now we try to show that $\text{supp}(\sigma_1)~  \cap~ \text{supp}(\tilde{\zeta}_c({z_1})) = \emptyset $. Suppose, we have $x^* \in \Gamma_C$ and $z_1(x^*)>0$ then using the definition of $\b{z}$, we have
	\begin{align*}
	u_1|_{\Gamma_C}(x^*) < u_{\circ,1}|_{\Gamma_C}(x^*) &= u_{h,1}^{enr}|_{\Gamma_C}(x^*) + u^{corr}_1|_{\Gamma_C}(x^*) -\|\b{u}^{corr}\|_{\b{L}^{\infty}(\bar{ \Omega})} -\|(u_{h,1}^{enr}-\chi)^{+}\|_{L^{\infty}(\Gamma_C)}, \\  & \leq u_{h,1}^{enr}|_{\Gamma_C}(x^*)-\|(u_{h,1}^{enr}-\chi)^{+}\|_{L^{\infty}(\Gamma_C)}, \\ & \leq \chi(x^*).
	\end{align*}
	Therefore, we have $\text{supp}(\tilde{\zeta}_c({z_1})) \subset \{u_1 < \chi\}$. Using Lemma \ref{sign3}, we conclude that $\text{supp}(\sigma_1)~\cap~(\text{supp}(\tilde{\zeta}_c({z_1})) = \emptyset $.
	\vspace{0.2 cm}
	\par
	\noindent
	Next, we prove $\b{u^{\circ}} \geq \b{u}$.  The proof proceeds in a similar manner.  Let $\b{z}:=  \max\{\b{u}-\b{u^{\circ}},\b{0}\}$. Again, we will show that $\b{z} \in \b{\mathcal{V}}$. First, observe that $\b{z}|_{\Gamma_D}=\b{0}$, as we have
	\begin{align*}
	(\b{u}-\b{u^{\circ}})|_{\Gamma_D}&=(\b{u}-\b{u}^{enr}_h-\b{u}^{corr}-\b{c}-\b{y})|_{\Gamma_D} \leq (\b{u}-\b{u}^{enr}_h)|_{\Gamma_D} = \b{0}.
	\end{align*}
	Using the coercivity of $a(\cdot,\cdot)$, definition of $\b{u^{\circ}}$, equations \eqref{eq:CRF}, \eqref{Gal1}, Remarks \ref{sign4},  \ref{rem121},  Lemma \ref{property1} and Lemma \ref{sign2},  it holds that
	\begin{align*} 
	\|\b{\nabla} \b{z}\|^2_{\b{L}^2(\Omega)}& \lesssim a(\b{z},\b{z}) \notag \\ &= a(\b{u}-\b{u^{\circ}},\b{z})= a(\b{u}-\b{u}^{enr}_h-\b{u}^{corr},\b{z})\notag \\ & = a(\b{u}-\b{u}^{enr}_h,\b{z}) - \langle \bm{R}_h , \bm{z} \rangle_{-1,1} \\ &= 
 \widehat {\bm{\sigma}}_h(\gamma_0(\bm{z}))- \tilde{\bm{\sigma}}(\bm{z})
 \\ &
=\langle {\bm{\sigma}}_h , \gamma_c(\bm{z}) \rangle_c- \langle{\bm{\sigma}}, \gamma_c (\bm{z})\rangle_c  
 \\ &  \leq \langle {\bm{\sigma}}_h , \gamma_c(\bm{z}) \rangle_c \\ & = \int\limits_{\Gamma_C} \sigma_{h,1}  q_1~ds = \sum\limits_{e \in \cF_h^C} \int\limits_{e} \sigma_{h,1}  q_1~ds,
	\end{align*}
	where $\gamma_c(\b{z}):=\b{q}=(q_1,q_2)$. Let $e \in \cF_h^C$ be any arbitrary edge. It is enough to prove that $\int\limits_{e} \sigma_{h,1} q_1~ds=0$. To prove this, we first show for any $e \in \cF_h^C$, if there exists $x^* \in e$ such that $q_1(x^*) > 0$ then $\sigma_{h,1} =0$ on $e$. We prove it by contradiction. Assume, there exists $x^* \in e$ such that $q_1(x^*)>0$ and $\sigma_{h,1}(x^*)>0$ on $e$. Then, we have
	\begin{align*}
	q_1(x^*)>0 \implies u_1|_{\Gamma_C}(x^*) &> u^{\circ}_1|_{\Gamma_C}(x^*) \\  & \geq u_{h,1}^{enr}|_{\Gamma_C}(x^*) + u^{corr}_1|_{\Gamma_C}(x^*) +\|\b{u}^{corr}\|_{\b{L}^{\infty}(\bar{\Omega})} \\ & \hspace{0.5cm}+ \|(\chi-u_{h,1}^{enr})^{+}\|_{L^{\infty}(\{\sigma_{h,1}>0\})}  \\ & \geq u_{h,1}^{enr}|_{\Gamma_C}(x^*)  + \|(\chi-u_{h,1}^{enr})^{+}\|_{L^{\infty}(\{\sigma_{h,1}>0\})}  \geq \chi(x^*)
	\end{align*}
	which is a contradiction. Finally,  if $q_1=0$ on $e \in \cF_h^C $, then $ \int\limits_{e} \sigma_{h,1} q_1~ds =0$ and if there exists $x^* \in e$  such that $q_1(x^*)=z_1|_{\Gamma_C}(x^*)>0$ then $\sigma_{h,1}=0$ on $e$. Therefore, $\sum\limits_{e \in \cF_h^C} \int\limits_{e} \sigma_{h,1}q_1~ds =0$. Hence, we conclude the proof of the lemma.
\end{proof}
\noindent
With the help of Lemma \ref{lem:uplo}, we deduce the following result.
\begin{lemma} \label{lem4.5}
	It holds that
	\begin{align*}
	\|\b{u}-\b{u}_h\|_{\b{L}^{\infty}(\Omega)} \lesssim 2\|\b{u}^{corr}\|_{\b{L}^{\infty}(\bar{ \Omega}) } + \|(u_{h,1}^{enr}-\chi)^{+}\|_{L^{\infty}(\Gamma_C)} + \|(\chi-u_{h,1}^{enr})^+\|_{L^{\infty}(\{\sigma_{h,1}>0\})} + \eta_5 \notag.
	\end{align*}
\end{lemma}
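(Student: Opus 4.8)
The plan is to extract the estimate on $\|\b{u}-\b{u}_h\|_{\b{L}^\infty(\Omega)}$ from the two-sided barrier bound of Lemma \ref{lem:uplo} together with the approximation property of the smoothing operator $\b{E}_h$ (Lemma \ref{enriching_map}). First I would use Lemma \ref{lem:uplo}, which gives $\b{u_\circ}\le\b{u}\le\b{u^\circ}$ pointwise a.e. on $\O$, to sandwich $\b{u}-\b{u}_h^{enr}$. Subtracting $\b{u}_h^{enr}$ throughout and recalling the definitions \eqref{eq:lowerb}--\eqref{eq:upperb}, one gets, componentwise and pointwise,
\begin{align*}
\b{u}^{corr}-\b{c}-\b{s} \;\le\; \b{u}-\b{u}_h^{enr} \;\le\; \b{u}^{corr}+\b{c}+\b{y},
\end{align*}
hence
\begin{align*}
\|\b{u}-\b{u}_h^{enr}\|_{\b{L}^\infty(\O)} \;\le\; \|\b{u}^{corr}\|_{\b{L}^\infty(\bar\O)} + \|\b{c}\|_{\b{L}^\infty(\bar\O)} + \max\{\|\b{s}\|_{\b{L}^\infty},\|\b{y}\|_{\b{L}^\infty}\}.
\end{align*}
Since $\b{c}$ has each component equal to $\|\b{u}^{corr}\|_{\b{L}^\infty(\bar\O)}$, the first two terms combine to $2\|\b{u}^{corr}\|_{\b{L}^\infty(\bar\O)}$; the components of $\b{s}$ and $\b{y}$ are exactly $\|(u_{h,1}^{enr}-\chi)^+\|_{L^\infty(\Gamma_C)}$ and $\|(\chi-u_{h,1}^{enr})^+\|_{L^\infty(\{\sigma_{h,1}>0\})}$, so bounding the max by the sum yields those two terms of the claimed inequality.

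It remains to pass from $\b{u}_h^{enr}=\b{E}_h(\b{u}_h)$ back to $\b{u}_h$, which is handled by a triangle inequality:
\begin{align*}
\|\b{u}-\b{u}_h\|_{\b{L}^\infty(\O)} \;\le\; \|\b{u}-\b{u}_h^{enr}\|_{\b{L}^\infty(\O)} + \|\b{u}_h^{enr}-\b{u}_h\|_{\b{L}^\infty(\O)},
\end{align*}
and the second term is controlled by $\max_{T\in\cT_h}\|\b{E}_h\b{u}_h-\b{u}_h\|_{\b{L}^\infty(T)}\lesssim \|\sjump{\b{u}_h}\|_{\b{L}^\infty(\cF_h^0)}=\eta_5$ via estimate \eqref{GFGF} of Lemma \ref{enriching_map}. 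Collecting the pieces gives precisely
\begin{align*}
\|\b{u}-\b{u}_h\|_{\b{L}^\infty(\O)} \lesssim 2\|\b{u}^{corr}\|_{\b{L}^\infty(\bar\O)} + \|(u_{h,1}^{enr}-\chi)^+\|_{L^\infty(\Gamma_C)} + \|(\chi-u_{h,1}^{enr})^+\|_{L^\infty(\{\sigma_{h,1}>0\})} + \eta_5.
\end{align*}

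There is essentially no hard obstacle here: the lemma is a bookkeeping consequence of the already-established barrier lemma and the enriching-operator bound. The only points demanding a little care are the componentwise nature of the inequalities in \eqref{eq:lowerb}--\eqref{eq:upperb} (so that $\b{L}^\infty(\O)$ of the vector is the max over components, and the constant vectors $\b{c},\b{s},\b{y}$ contribute their common component value) and the fact that the $\b{L}^\infty(\bar\O)$-norm of $\b{u}^{corr}$ dominates its $\b{L}^\infty(\O)$-norm, so that Lemma \ref{lem:uplo} (stated a.e. on $\O$, with barriers involving $\bar\O$-norms) can be invoked cleanly. I would simply remark that these are immediate from the definitions and write the three displayed inequalities above in sequence.
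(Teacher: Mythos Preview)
Your proposal is correct and matches the paper's intended argument. The paper itself does not spell out a proof of this lemma, stating only that it follows ``with the help of Lemma~\ref{lem:uplo}''; your derivation supplies precisely the missing details (the componentwise sandwich from \eqref{eq:lowerb}--\eqref{eq:upperb}, the identification of $\b c,\b s,\b y$ with the stated quantities, and the passage from $\b u_h^{enr}$ to $\b u_h$ via the triangle inequality and estimate \eqref{GFGF} of Lemma~\ref{enriching_map}, which accounts for the $\eta_5$ term).
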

\noindent
Besides the bound on the corrector function, i.e.  $\|\b{u}^{corr}\|_{\b{L}^{\infty}(\bar{ \Omega}) }$,  all terms involved in the right hand side of the last estimate depend only on given data and the discrete solution
and are thus computable.  It remains to estimate $\|\b{u}^{corr}\|_{\b{L}^{\infty}(\bar{ \Omega}) }$ in terms of computable quantities. 
\noindent
\subsection{Bound on $\|\b{u}^{corr}\|_{\b{L}^{\infty}(\bar{ \Omega}) }$} \label{4.3}	It is essential to bound the term $\|\b{u}^{corr}\|_{\b{L}^{\infty}(\bar{ \Omega}) }$ to get the desired upper bound on the error term $\|\b{u}-\b{u}_h\|_{\b{L}^{\infty}(\Omega)}$. Now, let $k \in \{1,2\}$ and $z_0 \in \bar{ \Omega}$ be such that $\|\b{u}^{corr}\|_{\b{L}^{\infty}(\bar{ \Omega}) }=|u_k^{corr}(z_0)|$. Using equations (\ref{eq:GR}) and (\ref{eq:CRF}), we deduce
\begin{align} \label{ww}
\|\b{u}^{corr}\|_{\b{L}^{\infty}(\bar{ \Omega}) }=|u_k^{corr}(z_0)|= \langle \bm{R}_h , \b{\Lambda}_k^{z_0} \rangle_{-1,1},
\end{align}
where $\bm{\Lambda}_k^{x_0}$ is the $k^{th}$-column of the Green matrix $\bm{\Lambda}^{z_0}$. Using the definition of the residual functional \eqref{Gal1}, Remark \ref{property1}, \eqref{def1}, \eqref{def2}, rearrangement of terms and integration by parts, we have
\begin{align} \label{property3}
\langle \bm{R}_h , \b{\Lambda}_k^{z_0} \rangle_{-1,1} &= \tilde{a}(\b{u}-\b{u}^{enr}_h,\b{\Lambda}_k^{z_0})+ \tilde{\bm{\sigma}}(\b{\Lambda}_k^{z_0}) - \widehat{\bm{\sigma}}_h(\gamma_0(\b{\Lambda}_k^{z_0})) \nonumber\\ &= \tilde{a}(\b{u}, \b{\Lambda}_k^{z_0}) + \tilde{\bm{\sigma}}(\b{\Lambda}_k^{z_0}) - \tilde{a}(\b{u}^{enr}_h,\b{\Lambda}_k^{z_0})- \widehat{\bm{\sigma}}_h(\gamma_0(\b{\Lambda}_k^{z_0})) \nonumber \\ 
&= \tilde{B}(\b{\Lambda}_k^{z_0}) - \tilde{a}(\b{u}^{enr}_h,\b{\Lambda}_k^{z_0})- \widehat{\b{\sigma}}_h(\gamma_0(\b{\Lambda}_k^{z_0})) \nonumber \\
 & = \bigg( \tilde{B}( \b{\Lambda}_k^{z_0}- \Phi_h(\b{\Lambda}_k^{z_0})) -\tilde{a}(\b{u}_h,\b{\Lambda}_k^{z_0}- \Phi_h(\b{\Lambda}_k^{z_0}))- \widehat{ \b{\sigma}}_h (\gamma_0(\b{\Lambda}_k^{z_0}-\Phi_h(\b{\Lambda}_k^{z_0})))  \bigg) \nonumber \\ & \hspace{0.5cm} + \tilde{B}(\Phi_h(\b{\Lambda}_k^{z_0})) -\tilde{a}(\b{u}_h,\Phi_h(\b{\Lambda}_k^{z_0})) -\widehat{ \b{\sigma}}_h(\gamma_0(\Phi_h(\b{\Lambda}_k^{z_0}))) -\tilde{a}(\b{u}^{enr}_h-\b{u}_h,\b{\Lambda}_k^{z_0})  \nonumber \\ & = \tilde{B}( \b{\Lambda}_k^{z_0}- \Phi_h(\b{\Lambda}_k^{z_0})) + \sum\limits_{T\in \mathcal{T}_h} \int\limits_{T} \b{div}\b{\Xi}(\b{u}_h)\cdot (\b{\Lambda}_k^{x_0}- \Phi_h(\b{\Lambda}_k^{z_0}))~dx \nonumber \\ & \hspace{0.4cm}  - \sum\limits_{ T \in \cT_h} ~\int\limits_{\partial T} (\bm{\Xi}(\bm{u}_h) \bm{n}) \cdot (\b{\Lambda}_k^{z_0}- \Phi_h(\b{\Lambda}_k^{z_0})) ~ds - \widehat{ \b{\sigma}}_h( \gamma_0(\b{\Lambda}_k^{z_0}-\Phi_h(\b{\Lambda}_k^{z_0}))) \nonumber \\ & \hspace{0.5cm} + \tilde{B}(\Phi_h(\b{\Lambda}_k^{z_0})) -M^*_{DG}(\b{u}_h,\Phi_h(\b{\Lambda}_k^{z_0})) + L^*_{DG}(\b{u}_h,\Phi_h(\b{\Lambda}_k^{z_0}))-\widehat{ \b{\sigma}}_h(\gamma_0(\Phi_h(\b{\Lambda}_k^{z_0}))) \nonumber \\ & \hspace{0.5cm} -\tilde{a}(\b{u}^{enr}_h-\b{u}_h,\b{\Lambda}_k^{z_0}). \end{align}
For a tensor-valued function $\bm{w}$ and vector-valued $\bm{v}$, we have the below identity \cite{wang2011discontinuous}
\begin{align} \label{property2}
\sum\limits_{ T \in \cT_h}~ \int\limits_{\partial T} (\bm{w} \bm{n}) \cdot \bm{v} ~ds & = \sum\limits_{e \in \cF^{int}_h} \int\limits_e \sjump{\bm{w}} \cdot \smean{\bm{v}}~ds + \sum\limits_{e \in \cF_h} \int\limits_e \smean{\bm{w}} : \sjump{\bm{v}}~ ds.
\end{align} 
Therefore, using \eqref{property2} and using the fact that $M^*_{DG}(\b{v}_h,\b{w}_h)=M_{DG}(\b{v}_h,\b{w}_h)~\text{and}~\tilde{B}(\b{v}_h)={B}(\b{v}_h)~\forall~\b{v}_h,\b{w}_h \in \b{\cV_h}$, we can rewrite \eqref{property3} as follows
\begin{align} \label{pro2}
\langle \bm{R}_h , \b{\Lambda}_k^{z_0} \rangle_{-1,1}&= \tilde{B}( \b{\Lambda}_k^{z_0}- \Phi_h(\b{\Lambda}_k^{z_0})) + \sum\limits_{T\in \mathcal{T}_h} \int\limits_{T} \b{div}\b{\Xi}(\b{u}_h)\cdot (\b{\Lambda}_k^{x_0}- \Phi_h(\b{\Lambda}_k^{z_0}))~dx \notag \\ & \hspace{0.3cm}  - \sum\limits_{e \in \cF^{int}_h} \int\limits_e \sjump{\bm{\Xi}(\bm{u}_h)} \cdot \smean{\b{\Lambda}_k^{z_0}- \Phi_h(\b{\Lambda}_k^{z_0})}~ds - \sum\limits_{e \in \cF_h} \int\limits_e \smean{\bm{\Xi}(\bm{u}_h)} : \sjump{\b{\Lambda}_k^{z_0}- \Phi_h(\b{\Lambda}_k^{z_0})}~ ds \notag \\ & \hspace{0.3cm} + \underbrace{\bigg[B(\Phi_h(\b{\Lambda}_k^{z_0})) -M_{DG}(\b{u}_h,\Phi_h(\b{\Lambda}_k^{z_0})) -\widehat{ \b{\sigma}}_h( \gamma_0(\Phi_h(\b{\Lambda}_k^{z_0}))) \bigg]}_\text{Term1} \notag \\ & \hspace{0.3cm} - \widehat{\b{\sigma}}_h( \gamma_0(\b{\Lambda}_k^{z_0}-\Phi_h(\b{\Lambda}_k^{z_0}))) + L^*_{DG}(\b{u}_h,\Phi_h(\b{\Lambda}_k^{z_0}))  -\tilde{a}(\b{u}^{enr}_h-\b{u}_h,\b{\Lambda}_k^{z_0}). 
\end{align}
Using Lemma \ref{ksign}, we have Term 1 to be zero. By adding and subtracting the term $L^*_{DG}(\b{u}_h,\b{\Lambda}_k^{z_0})$ and inserting the definition of $B(\cdot, \cdot)$ in \eqref{pro2}, we get 
\begin{align} \label{eq4}
\langle \bm{R}_h , \b{\Lambda}_k^{z_0} \rangle_{-1,1} & = \sum\limits_{T\in \mathcal{T}_h} \int\limits_{T} (\b{f} + \b{div}\b{\Xi}(\b{u}_h))\cdot (\b{\Lambda}_k^{x_0}- \Phi_h(\b{\Lambda}_k^{z_0}))~dx
\notag \\ &\hspace{0.7cm}+\sum\limits_{e \in \cF_h^N}\int\limits_{e}(\b{g}-\b{\Xi}(\b{u}_h)\b{n}_e)\cdot (\b{\Lambda}_k^{z_0}- \Phi_h(\b{\Lambda}_k^{z_0}))~ds \nonumber \\ &\hspace{0.7cm}-\sum\limits_{e\in \cF_h^C}\int_{e}({\b{\Xi}}(\b{u}_h)\b{n}_e+ \b{\sigma}_h) \cdot (\b{\Lambda}_k^{z_0}- \Phi_h(\b{\Lambda}_k^{z_0}))~ds  \notag \\ & \hspace{0.7cm}- \sum\limits_{e \in \cF^{int}_h} \int\limits_e \sjump{\bm{\Xi}(\bm{u}_h)} \cdot \smean{\b{\Lambda}_k^{z_0}- \Phi_h(\b{\Lambda}_k^{z_0})}~ds \nonumber \\ & \hspace{0.7cm} - \underbrace{\bigg(L^*_{DG}\big(\b{u}_h,\b{\Lambda}_k^{z_0}-\Phi_h(\b{\Lambda}_k^{z_0})\big) + \sum\limits_{e \in \cF^{0}_h} \int\limits_e \smean{\bm{\Xi}(\bm{u}_h)} : \sjump{\b{\Lambda}_k^{z_0}- \Phi_h(\b{\Lambda}_k^{z_0})}~ ds\bigg)}_\text{Term  2} \nonumber\\ & \hspace{0.7cm} \underbrace{-\tilde{a}(\b{u}^{enr}_h-\b{u}_h,\b{\Lambda}_k^{z_0}) + L^*_{DG}(\b{u}_h,\b{\Lambda}_k^{z_0})}_\text{Term 3}. 
\end{align}
Next, we deal with the Term 2 and Term 3 separately. Firstly, using the definition of $ L^*_{DG}(\cdot, \cdot)$ in Term 2 together with Remark \ref{proj} and the fact $\Phi_h(\b{\Lambda}_k^{z_0}-\Phi_h(\b{\Lambda}_k^{z_0}))=0$, we have 
\begin{align}
L^*_{DG}(\b{u}_h,\b{\Lambda}_k^{z_0}-\Phi_h(\b{\Lambda}_k^{z_0}))&= - \sum_{e \in \cF^{0}_h} \int\limits_e \smean{{\bm{\Xi}}(\Phi_h(\b{u}_h))} : \sjump{\b{\Lambda}_k^{z_0}-\Phi_h(\b{\Lambda}_k^{z_0})}~ds \nonumber \\ & \hspace{0.6 cm} + \theta \sum\limits_{e \in \cF^{0}_h} \int\limits_e \sjump{\b{u}_h}: \smean{{\bm{\Xi}}(\Phi_h(\b{\Lambda}_k^{z_0}-\Phi_h(\b{\Lambda}_k^{z_0}))}~ds  \nonumber \\  & \hspace{0.6cm}+ \sum\limits_{e \in \cF^{0}_h } \frac{\eta}{h_e} \int\limits_e \sjump{\b{u}_h} : \sjump{\b{\Lambda}_k^{z_0}-\Phi_h(\b{\Lambda}_k^{z_0})}~ds \\ &= - \sum_{e \in \cF^{0}_h } \int\limits_e \smean{\bm{\Xi}(\b{u}_h)}
: \sjump{\b{\Lambda}_k^{z_0}-\Phi_h(\b{\Lambda}_k^{z_0})}~ds  \nonumber \\  & \hspace{0.6cm}+ \sum\limits_{e \in \cF^{0}_h} \frac{\eta}{h_e} \int\limits_e \sjump{\b{u}_h}: \sjump{\b{\Lambda}_k^{z_0}-\Phi_h(\b{\Lambda}_k^{z_0})}~ds.  \label{eq2}
\end{align}	
Therefore, we have the following representation of Term 2. 
\begin{align}
\text{Term 2} = \sum\limits_{e \in \cF^{0}_h} \frac{\eta}{h_e} \int\limits_e \sjump{\b{u}_h}: \sjump{\b{\Lambda}_k^{z_0}-\Phi_h(\b{\Lambda}_k^{z_0})}~ds. \label{eq6}
\end{align} 
Next, we focus on Term 3. Using the definition of $\tilde{a}(\cdot,\cdot)$ and $L^*_{DG}(\cdot,\cdot)$ and the fact that $\sjump{\b{\Lambda}_k^{z_0}}=0$, it holds that
\begin{align}
\text{Term 3}&=\sum\limits_{T\in \mathcal{T}_h} \int\limits_{T}  \bm{\Xi}(\b{u}^{enr}_h-\b{u}_h): \bm{\epsilon}(\b{\Lambda}_k^{z_0}) ~dx  - \sum_{e \in \cF^{0}_h} \int\limits_e \smean{{\bm{\Xi}}(\Phi_h(\b{u}_h))}:\sjump{ \b{\Lambda}_k^{z_0}}~ds  \nonumber\\ & \hspace{0.7cm}+ \theta \sum_{e \in \cF^{0}_h} \int\limits_e \sjump{\b{u}_h}: \smean{{\bm{\Xi}}(\Phi_h(\b{\Lambda}_k^{z_0})}~ds \nonumber  + \sum\limits_{e \in \cF^{0}_h} \frac{\eta}{h_e} \int\limits_e \sjump{\b{u}_h}: \sjump{\b{\Lambda}_k^{z_0}}~ds \nonumber \\ & =  \sum\limits_{T\in \mathcal{T}_h} \int\limits_{T}  \bm{\Xi}(\b{u}^{enr}_h-\b{u}_h): \bm{\epsilon}(\b{\Lambda}_k^{z_0}) ~dx + \theta \sum_{e \in \cF^{0}_h} \int\limits_e \sjump{\b{u}_h}: \smean{{\bm{\Xi}}(\Phi_h(\b{\Lambda}_k^{z_0})}~ds. \label{eq5}
\end{align}
With the help of equations \eqref{pro2}, \eqref{eq4},   \eqref{eq6}, \eqref{eq5},  \eqref{GFGFF} together with Lemmas \ref{Lemmmma}, \ref{approx}, \ref{enriching_map}, \ref{lemama} and H\"older's inequality, the following holds
\begin{align*}
\langle \bm{R}_h , \b{\Lambda}_k^{z_0} \rangle_{-1,1} & = \sum\limits_{T\in \mathcal{T}_h} \int\limits_{T} (\b{f} + \b{div}\b{\Xi}(\b{u}_h))\cdot (\b{\Lambda}_k^{x_0}- \Phi_h(\b{\Lambda}_k^{z_0}))~dx
\notag \\ &\hspace{0.7cm}+\sum\limits_{e \in \cF_h^N}\int\limits_{e}(\b{g}-\b{\Xi}(\b{u}_h)\b{n}_e)\cdot (\b{\Lambda}_k^{z_0}- \Phi_h(\b{\Lambda}_k^{z_0}))~ds \nonumber \\ &\hspace{0.7cm}-\sum\limits_{e\in \cF_h^C}\int\limits_{e}({\b{\Xi}}(\b{u}_h)\b{n}_e+ \b{\sigma}_h) \cdot (\b{\Lambda}_k^{z_0}- \Phi_h(\b{\Lambda}_k^{z_0}))~ds  \notag \\ & \hspace{0.7cm}- \sum\limits_{e \in \cF^{int}_h} \int\limits_e \sjump{\bm{\Xi}(\bm{u}_h)} \cdot \smean{\b{\Lambda}_k^{z_0}- \Phi_h(\b{\Lambda}_k^{z_0})}~ds + \theta \sum\limits_{e \in \mathcal{F}^{0}_h} \int\limits_e \sjump{\b{u}_h}: \smean{{\bm{\Xi}}(\Phi_h(\b{\Lambda}_k^{z_0})}~ds \\ & \hspace{0.7cm} + \sum\limits_{e \in \cF^{0}_h} \frac{\eta}{h_e} \int\limits_e \sjump{\b{u}_h}:\sjump{\b{\Lambda}_k^{z_0}-\Phi_h(\b{\Lambda}_k^{z_0})}~ds  + \sum\limits_{T\in \mathcal{T}_h} \int\limits_{T}  \bm{\Xi}(\b{u}^{enr}_h-\b{u}_h): \bm{\epsilon}(\b{\Lambda}_k^{z_0}) ~dx  \\ & \lesssim \eta_1  \sum_{T\in \mathcal{T}_h} h_T^{-2}\|\b{\Lambda}_k^{z_0}- \Phi_h(\b{\Lambda}_k^{z_0})\|_{\b{L}^1(T)} + \eta_2 \sum_{e \in \cF_h^{int}} h_e^{-1}\|\b{\Lambda}_k^{z_0}- \Phi_h(\b{\Lambda}_k^{z_0})\|_{\b{L}^1(e)} \\ & \hspace{0.5cm}  + \eta_3  \sum_{e \in \cF_h^N} h_e^{-1}\|\b{\Lambda}_k^{z_0}- \Phi_h(\b{\Lambda}_k^{z_0})\|_{\b{L}^1(e)} + \eta_4 \sum_{e \in \cF_h^{C}} h_e^{-1}\|\b{\Lambda}_k^{z_0}- \Phi_h(\b{\Lambda}_k^{z_0})\|_{\b{L}^1(e)}  \\ & \hspace{0.5cm} + \sum\limits_{T\in \mathcal{T}_h} \int\limits_{T} \bm{\Xi}(\b{u}^{enr}_h-\b{u}_h): \bm{\epsilon}(\b{\Lambda}_k^{z_0}) ~dx  + \eta_5  \sum_{e \in \cF^{0}_h} \|{\bm{\Xi}}(\Phi_h(\b{\Lambda}_k^{z_0})\|_{\b{L}^1(e)} \\ & \hspace{0.5cm} + \eta_5  \sum_{e \in \cF^{0}_h}  h_e^{-1}\|\b{\Lambda}_k^{z_0}- \Phi_h(\b{\Lambda}_k^{z_0})\|_{\b{L}^1(e)} \\ & \lesssim \bigg( \sum_{i=1}^{4} \eta_i \bigg) \bigg(\sum_{T\in \mathcal{T}_h} h_T^{-2}\|\b{\Lambda}_k^{z_0}- \Phi_h(\b{\Lambda}_k^{z_0})\|_{\b{L}^1(T)} + \sum_{T\in \mathcal{T}_h} h_T^{-1}|\b{\Lambda}_k^{z_0}- \Phi_h(\b{\Lambda}_k^{z_0})|_{1,1,T}  \bigg)   \\ & \hspace{0.5cm} + \bigg(\max_{T\in \mathcal{T}_h} |\b{u}^{enr}_h-\b{u}_h|_{1,1,T} \bigg)  \sum_{T\in \mathcal{T}_h} |\b{\Lambda}_k^{z_0}|_{1,1,T} + \eta_5  \sum_{e \in \mathcal{F}^{0}_h} h_e^{-1} \|\b{\Lambda}_k^{z_0}- \Phi_h(\b{\Lambda}_k^{z_0})\|_{\b{L}^1(e)}  \\ & \hspace{0.5cm}+ \eta_5 \bigg( \sum_{ T \in \cT_h} h_T^{-1} |\Phi_h(\b{\Lambda}_k^{z_0})|_{1,1,T} + \sum_{ T \in \cT_h} |\Phi_h(\b{\Lambda}_k^{z_0})|_{2,1,T} \bigg) \\ & \lesssim  \bigg( \sum_{i=1}^{5} \eta_i \bigg) \bigg(\sum_{T\in \mathcal{T}_h} h_T^{-1}|\b{\Lambda}_k^{z_0}|_{1,1,T} \bigg)  \\ & \lesssim  (1+  |ln(h_{min})|^2) \bigg( \sum_{i=1}^{5} \eta_i \bigg).
\end{align*}
Using the Lemma \ref{lem4.5} and \eqref{ww}, we deduce the following estimate
\begin{align} \label{error}
\|\bm{u}-\bm{u}_h\|_{\bm{L}^{\infty}(\Omega)} \lesssim \mathcal{E}_h.
\end{align}
Next,  we establish an upper bound on $\langle \bm{R}_h, \bm{w} \rangle_{-1,1}$ for any $\bm{w} \in \bm{\mathcal{N}}$,  which is subsequently utilized for estimating error in the density forces.
\begin{lemma} \label{lem:Ghest}
	The following holds
	\begin{align}
	\langle \bm{R}_h, \bm{w} \rangle_{-1,1}  \lesssim  \bigg( \sum_{i=1}^{5} \eta_i \bigg) |\b{w}|_{\b{W}^{2,1}(\O)} \quad\forall ~\bm{w} \in \bm{\mathcal{N}}.
	\end{align} 
\end{lemma}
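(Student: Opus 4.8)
\noindent
The plan is to retrace, with $\bm{w}\in\bm{\mathcal{N}}$ in place of the Green's matrix column $\b{\Lambda}_k^{z_0}$, the very same chain of identities \eqref{property3}--\eqref{eq5} used to bound $\langle\bm{R}_h,\b{\Lambda}_k^{z_0}\rangle_{-1,1}$; the extra regularity $\bm{w}\in\b{W}^{2,1}(\bar\O)$ and the side condition $\bm{\epsilon}(\bm{w})\bm{n}=\b{0}$ on $\partial\O$ are exactly what allows one to replace the log--weighted Green's matrix bound of Lemma \ref{lemama} by the clean factor $|\bm{w}|_{\b{W}^{2,1}(\O)}$. First I would use $\bm{\mathcal{N}}\subset\bm{\cV}$ together with Remarks \ref{rem1}, \ref{rem121} and Lemma \ref{property1}(i) to get $\tilde a(\bm{u},\bm{w})+\tilde{\bm{\sigma}}(\bm{w})=\tilde B(\bm{w})=B(\bm{w})$, so that $\langle\bm{R}_h,\bm{w}\rangle_{-1,1}=B(\bm{w})-\tilde a(\bm{u}_h,\bm{w})-\tilde a(\bm{u}_h^{enr}-\bm{u}_h,\bm{w})-\widehat{\bm{\sigma}}_h(\gamma_0(\bm{w}))$; moreover $\sjump{\bm{w}}=\b{0}$ on $\cF_h^0$ (since $\bm{w}$ is continuous and vanishes on $\Gamma_D$) and $\sjump{\bm{\Xi}(\bm{w})}=\b{0}$ on $\cF_h^{int}$ (since $\bm{\Xi}(\bm{w})\in\b{W}^{1,1}(\O)$ is single valued). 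Inserting $\Phi_h\bm{w}$, integrating by parts element--by--element, annihilating the ``Term~1'' block $B(\Phi_h\bm{w})-M_{DG}(\bm{u}_h,\Phi_h\bm{w})-\widehat{\bm{\sigma}}_h(\gamma_0(\Phi_h\bm{w}))$ by Lemma \ref{ksign}, and using Remark \ref{proj}, then leads to the representation \eqref{eq4} of $\langle\bm{R}_h,\bm{w}\rangle_{-1,1}$ as the five interior/boundary residual terms behind $\eta_1,\dots,\eta_5$, the consistency term $\theta\sum_{e\in\cF_h^0}\int_e\sjump{\bm{u}_h}\!:\!\smean{\bm{\Xi}(\Phi_h\bm{w})}\,ds$, and the nonconforming term $-\tilde a(\bm{u}_h^{enr}-\bm{u}_h,\bm{w})$.

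\noindent
For the residual terms I would apply H\"older's inequality with the approximation/stability estimates \eqref{DR1}--\eqref{DR2} of $\Phi_h$ (Lemma \ref{approx}) and the trace inequality (Lemma \ref{Lemmmma}); since $\bm{w}\in\b{W}^{2,1}$ I may take $t=2$ in \eqref{DR2}, which turns each $\eta_i\,h_e^{-1}\|\bm{w}-\Phi_h\bm{w}\|_{\b{L}^1(e)}$ and $\eta_1\,h_T^{-2}\|\bm{w}-\Phi_h\bm{w}\|_{\b{L}^1(T)}$ into $\eta_i\,|\bm{w}|_{\b{W}^{2,1}(T^*)}$, and summing over finitely overlapping patches gives $\lesssim(\sum_{i=1}^{4}\eta_i)|\bm{w}|_{\b{W}^{2,1}(\O)}$; the penalty term contributes $\eta_5|\bm{w}|_{\b{W}^{2,1}(\O)}$ in the same way. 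The nonconforming term I would integrate by parts \emph{twice}, moving both derivatives onto $\bm{w}$, so that $-\tilde a(\bm{u}_h^{enr}-\bm{u}_h,\bm{w})=\sum_{T}\int_T(\bm{u}_h^{enr}-\bm{u}_h)\cdot\b{div}\,\bm{\Xi}(\bm{w})\,dx$ plus skeleton contributions; the volume part is controlled by $\|\bm{u}_h^{enr}-\bm{u}_h\|_{\b{L}^{\infty}(\O)}\,|\bm{w}|_{\b{W}^{2,1}(\O)}\lesssim\eta_5\,|\bm{w}|_{\b{W}^{2,1}(\O)}$ via Lemma \ref{enriching_map}, and in the skeleton part I use $\sjump{\bm{\Xi}(\bm{w})}=\b{0}$ on interior edges, $\sjump{\bm{u}_h^{enr}-\bm{u}_h}=-\sjump{\bm{u}_h}$ on $\cF_h^0$, and — crucially — the side condition $\bm{\epsilon}(\bm{w})\bm{n}=\b{0}$ on $\partial\O$: combined with $\bm{w}=\b{0}$ on $\Gamma_D$ it forces $\operatorname{div}\bm{w}=0$, hence $\bm{\Xi}(\bm{w})\bm{n}=\b{0}$, on $\Gamma_D$, which disposes of the Dirichlet--edge fluxes, while the Neumann/contact--edge fluxes carry the small factor $\bm{u}_h^{enr}-\bm{u}_h$ and are handled by Lemma \ref{enriching_map}. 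Finally the interior--skeleton residue is matched against the consistency term: writing $\smean{\bm{\Xi}(\Phi_h\bm{w})}=\smean{\bm{\Xi}(\bm{w})}+\smean{\bm{\Xi}(\Phi_h\bm{w}-\bm{w})}$, the $\Phi_h\bm{w}-\bm{w}$ part is of size $|\bm{w}|_{\b{W}^{2,1}(T^*)}$ and harmless, while the $\bm{\Xi}(\bm{w})$ part cancels the interior--skeleton residue (identically for the symmetric scheme, and after one further element--wise integration by parts in general). Collecting everything gives $\langle\bm{R}_h,\bm{w}\rangle_{-1,1}\lesssim(\sum_{i=1}^{5}\eta_i)\big(|\bm{w}|_{\b{W}^{2,1}(\O)}+\|\bm{w}\|_{\b{W}^{1,1}(\O)}\big)$, and since $\operatorname{meas}(\Gamma_D)>0$ a compactness/Poincar\'e argument on $\bm{\mathcal{N}}$ (the relevant kernel being trivial) yields $\|\bm{w}\|_{\b{W}^{1,1}(\O)}\lesssim|\bm{w}|_{\b{W}^{2,1}(\O)}$, which finishes the claim.

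\noindent
The step I expect to be the real obstacle is the treatment of the two nonconforming contributions, $-\tilde a(\bm{u}_h^{enr}-\bm{u}_h,\bm{w})$ and the DG consistency term, uniformly in $h$: the naive estimate produces an $h_T^{-1}$--weighted first--order seminorm $\sum_T h_T^{-1}|\bm{w}|_{\b{W}^{1,1}(T)}$, which is \emph{not} controlled by $|\bm{w}|_{\b{W}^{2,1}(\O)}$, so one is forced into the double integration by parts and into the delicate cancellation between these two terms, leaning on the continuity of $\bm{\Xi}(\bm{w})$ across interior faces, the vanishing of its normal trace on $\Gamma_D$ coming from the definition of $\bm{\mathcal{N}}$, and the $\b{L}^{\infty}$--control of $\bm{u}_h^{enr}-\bm{u}_h$ from Lemma \ref{enriching_map}.
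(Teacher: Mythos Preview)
Your route differs from the paper's at the decisive step. The paper does \emph{not} perform a second integration by parts on the nonconforming term, does not invoke any cancellation between $-\tilde a(\bm{u}_h^{enr}-\bm{u}_h,\bm{w})$ and the consistency term, and never uses the side condition $\bm{\epsilon}(\bm{w})\bm{n}=\b{0}$. Instead it retraces Subsection~\ref{4.3} verbatim to reach the intermediate bound
\[
\langle\bm{R}_h,\bm{w}\rangle_{-1,1}\;\lesssim\;\Big(\sum_{i=1}^{5}\eta_i\Big)\sum_{T\in\cT_h}h_T^{-1}|\bm{w}|_{\b{W}^{1,1}(T)},
\]
and then removes the $h_T^{-1}$ weight in one stroke: elementwise Cauchy--Schwarz gives $h_T^{-1}|\bm{w}|_{\b{W}^{1,1}(T)}\lesssim|\bm{w}|_{\b{W}^{1,2}(T)}$, after which the Poincar\'e-type inequality $\|\bm{\nabla}\psi\|_{L^2(\O)}\lesssim\|D^2\psi\|_{L^1(\O)}$ is quoted to pass to $|\bm{w}|_{\b{W}^{2,1}(\O)}$. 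Your alternative---investing the extra regularity already in Lemma~\ref{approx} (take $t=2$) for the residual blocks, and shifting all derivatives onto $\bm{w}$ in Term~3---is more explicit about where each power of $h$ goes, and for the symmetric scheme ($\theta=-1$) the cancellation you describe is exact and the argument closes.

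Where your proposal is incomplete is precisely the point you flag as the obstacle, namely the non-symmetric cases. After your extra integration by parts the combination of the nonconforming and consistency contributions leaves the interior-skeleton residue
\[
(1+\theta)\sum_{e\in\cF_h^{int}}\int_e\sjump{\bm{u}_h}:\bm{\Xi}(\bm{w})\,ds,
\]
which vanishes only for $\theta=-1$. The phrase ``one further element-wise integration by parts in general'' does not dispose of it: undoing the edge sum via \eqref{property2} and integrating by parts on each $T$ reintroduces a bulk term $\tilde a(\bm{u}_h,\bm{w})$ of full strength, so one is driven back to $\eta_5\sum_T h_T^{-1}|\bm{w}|_{\b{W}^{1,1}(T)}$, the very quantity you set out to avoid. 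The paper's Cauchy--Schwarz/Poincar\'e shortcut is exactly what it uses in place of this missing step, treating all three DG variants uniformly; if you wish to keep your integration-by-parts route you would need a separate argument for $\theta\in\{0,1\}$ that you have not supplied.
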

\begin{proof}
	Let $\bm{w} \in \bm{\mathcal{N}}$.  Now,  proceeding on the similar lines as in the subsection \ref{4.3} to deal  with the term $\langle \bm{R}_h , \b{w} \rangle_{-1,1}$ we find
	\begin{align*}
	\langle \bm{R}_h, \bm{w} \rangle_{-1,1} & \lesssim  \bigg( \sum_{i=1}^{5} \eta_i \bigg) \bigg(\sum_{T\in \mathcal{T}_h} h_T^{-1}|\b{w}|_{1,1,T} \bigg).
	\end{align*}
	{In the view of the Poincar$\acute{e}$'s type inequality $\|\b{\nabla} \psi\|_{L^2(\Omega)} \lesssim \|D^2\psi\|_{L^1(\Omega)}$ for $\psi \in W^{2,1}(\Omega) \cap H^1_0(\Omega)$  \cite[page 522]{nochetto2006pointwise}} and Cauchy-Schwarz inequality gives the following
	\begin{align*}
	\langle \bm{R}_h, \bm{w} \rangle_{-1,1}  \lesssim  \bigg( \sum_{i=1}^{5} \eta_i \bigg) \Big(\sum_{j=1}^{2} \sum_{T \in \cT_h}  |w_j|_{W^{1,2}(T)}\Big) \lesssim \bigg( \sum_{i=1}^{5} \eta_i \bigg) \Big(\sum_{j=1}^{2} \sum_{T \in \cT_h} |w_j|_{W^{2,1}(T)}\Big).
	\end{align*}
	Thus, we get the desired result.
\end{proof}
\par
\noindent
{
Finally, for $\bm{w} \in \bm{\mathcal{N}}$,   using Remarks \ref{rem1},  \ref{rem121},  equation \eqref{Gal1}  together with integration by parts and Lemma \ref{lem:Ghest} yields
\begin{align*}
 \langle {\bm{\sigma}}- {\bm{\sigma}}_h , \gamma_c(\b{w}) \rangle_c &= \langle {\bm{\sigma}},\gamma_c(\b{w}) \rangle_c -
 \langle {\bm{\sigma}}_h , \gamma_c(\b{w}) \rangle_c \\&= \tilde{\bm{\sigma}}(\bm{w}) -  \widehat{\bm{\sigma}}_h( \gamma_0(\bm{w}
 ))\\&=-a(\b{u}-\b{u}^{enr}_h,\b{w})+ \langle \bm{R}_h , \b{w} \rangle_{-1,1}\\ & \lesssim \Big(\|\b{u}-\b{u}^{enr}_h\|_{\bm{L}^{\infty}(\Omega)} + \sum_{i=1}^{5} \eta_i \Big)|\bm{w}|_{2,1,{ \Omega}},
\end{align*}
Therefore, 
\begin{align}
\|{\bm{\sigma}}-\bm{\sigma}_h\|_{-2,\infty, \Gamma_C} \lesssim \|\b{u}-\b{u}^{enr}_h\|_{\bm{L}^{\infty}(\Omega)} +\sum_{i=1}^{5} \eta_i.
\end{align}
Using relation \eqref{error} and Lemma
\ref{enriching_map},  we deduce
\begin{align} \label{error1}
\|{\bm{\sigma}}-\bm{\sigma}_h\|_{-2,\infty, \Gamma_C}\lesssim \mathcal{E}_h.
\end{align}}
\begin{proof}[\textbf{Proof of Theorem \ref{thm:rel11}}]
	Using the estimates \eqref{error} and \eqref{error1}, we conclude the proof of Theorem \ref{thm:rel11}.
	\end{proof}
\par
\noindent
Note that for the sake of notational convenience,  we denote $\eta_6$ to be $
		\|(E_{h,1}(\b{u}_{h})-\chi)^{+}\|_{L^{\infty}(\Gamma_C)}$ and $\eta_7$ as $ \|(\chi-E_{h,1}(\b{u}_{h}))^+\|_{L^{\infty}(\{\sigma_{h,1}>0\})}$.
\vspace{0.5 cm}
\par
\noindent
Next we aim to establish the lower bounds for the proposed error estimator $\mathcal{E}_h$.  
\subsection{Efficiency of the error estimator}
In this subsection, we discuss the local efficiency estimates for a posteriori error control for the quadratic DG FEM.  We refer to the article \cite{nochetto1995pointwise} for local efficiency estimates for pointwise a posteriori error analysis for linear elliptic problems. We begin by defining the error measure locally as follows: for any $D \subset \O$, denote 
\begin{align*}
Err_D =  \|\bm{u}-\bm{u}_h\|_{\bm{L}^{\infty}(D)} + \|{\bm{\sigma}}-\bm{\sigma}_h\|_{-2,\infty,D_1},
\end{align*} 
where $D_1 = \bar{D} \cap \Gamma_C$.  

 \vspace{0.2 cm}
 \noindent
The next theorem ensures that the residual estimator is bounded by error measure.  Before stating it we will define the following high order oscillation terms
\begin{itemize}
\item For $T \in \mathcal{T}_h$ define $$ Osc(\b{f}, T):= h_T^2 \|\b{f} -\bar{\b{f}}\|_{{\b{L}^{\infty}(T)}}.$$
\item For $e \in \mathcal{F}^N_h$ define $$ Osc(\b{\pi}, e):= h_e \|\b{\pi} -\bar{\b{\pi}}\|_{{\b{L}^{\infty}(e)}}.$$
\end{itemize}
Therein,  $\bar{\b{f}}$ and $\bar{\b{\pi}}$ denotes the piecewise constant approximations of the given data $\b{f}$ and $\b{\pi}$, respectively.
\begin{theorem}
Let $\b{u}$ and $\b{u_h}$ be the solution of equation \eqref{eq:CVI} and \eqref{eq:DVI},  respectively.  Then, the following hold
\begin{itemize}
\item $ h_T^2 \|\b{f} + \b{div}\b{\Xi}(\b{u}_h))\|_{\b{L}^{\infty}(T)} \lesssim Err_{T} + Osc(\b{f}, T) \quad\forall~T \in \mathcal{T}_h$,
\vspace{0.4 cm}
\item $ \|\sjump{\b{u}_h}\|_{{\b{L}^{\infty}(e)}} \lesssim \|\b{u}-\b{u}_h\|_{\b{L}^{\infty}(\omega_e)} \quad\forall~e \in \mathcal{F}^{0}_h$,
\vspace{0.4 cm}
\item $ h_e \|\sjump{\bm{\Xi}(\bm{u}_h)}\|_{{\b{L}^{\infty}(e)}} \lesssim Err_{{\mathcal{J}}} +Osc(\b{f}, T) \quad\forall~e\in \mathcal{F}^{0}_h~{and}~e \in \partial T, ~T \in \mathcal{T}_h$,
\vspace{0.4 cm}
\item $ h_e \|\b{\Xi}(\b{u}_h)\b{n}_e+ \b{\sigma}_h\|_{{\b{L}^{\infty}(e)}} \lesssim Err_{{\mathcal{J}}} + Osc(\b{f}, T) \quad\forall~e\in \mathcal{F}^C_h~{and}~e \in \partial T, ~T \in \mathcal{T}_h$,
\vspace{0.4 cm}
\item $ h_e \|\b{\pi}-\b{\Xi}(\b{u}_h)\b{n}_e\|_{{\b{L}^{\infty}(e)}} \lesssim Err_{{\mathcal{J}}} + Osc(\b{f}, T)+ Osc(\b{\pi}, e)\quad\forall~e\in \mathcal{F}^N_h~{and}~e \in \partial T, ~T \in \mathcal{T}_h$,
\end{itemize}
where $\omega_e$ refers to the set of triangles sharing the edge $e$.
\end{theorem}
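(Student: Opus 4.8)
\noindent
The plan is to prove the five bounds one at a time by the localized residual technique based on bubble functions, in the supremum-norm variant introduced by Nochetto \cite{nochetto1995pointwise} and developed for Signorini-type problems in \cite{KP:2021:Signorini,KP:2022:Signorini,KP:2022:QuadLinfSignorini}, adapted here to the quadratic DG discretization and to the contact force densities $\b{\sigma},\b{\sigma}_h$. Three facts are used repeatedly: since $\b{f}\in\b{L}^\infty(\O)$ one has $-\b{div}\b{\Xi}(\b{u})=\b{f}$ a.e.\ in $\O$; on $\Gamma_N$, $\b{\Xi}(\b{u})\b{n}=\b{\pi}$; and, combining \eqref{sigma} with an integration by parts and these two facts, the continuous contact force density satisfies $\b{\sigma}=-\b{\Xi}(\b{u})\b{n}$ on $\Gamma_C$. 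We also use that $\b{u}$ is continuous up to $\overline{\O}$, so that the one-sided trace of $\b{u}-\b{u}_h$ on an edge is dominated by $\|\b{u}-\b{u}_h\|_{\b{L}^\infty}$ over the adjacent element.

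For the element residual, fix $T\in\cT_h$ and set $\b{R}_T:=\b{f}+\b{div}\b{\Xi}(\b{u}_h)$, $\bar{\b{R}}_T:=\bar{\b{f}}+\b{div}\b{\Xi}(\b{u}_h)$. Testing with $\b{v}=b_T\bar{\b{R}}_T$, $b_T$ the interior bubble on $T$, norm equivalence on bubble-weighted polynomials gives $\|\bar{\b{R}}_T\|_{\b{L}^2(T)}^2\lesssim(\b{R}_T,b_T\bar{\b{R}}_T)_T+(\bar{\b{f}}-\b{f},b_T\bar{\b{R}}_T)_T$. Replacing $\b{f}$ by $-\b{div}\b{\Xi}(\b{u})$ and integrating by parts twice --- first to rewrite $(\b{R}_T,\b{v})_T$ as $\int_T\b{\Xi}(\b{u}-\b{u}_h):\b{\epsilon}(\b{v})\,dx$, then to move all derivatives off $\b{u}-\b{u}_h$ (at the cost of a boundary term on $\partial T$ bounded in the same way) --- and invoking the inverse and trace inequalities of Lemmas \ref{Lemama}--\ref{Lemmmma} yields $(\b{R}_T,b_T\bar{\b{R}}_T)_T\lesssim h_T^{-1}\|\b{u}-\b{u}_h\|_{\b{L}^\infty(T)}\|\bar{\b{R}}_T\|_{\b{L}^2(T)}$. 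Combining the displays, scaling via $\|\bar{\b{R}}_T\|_{\b{L}^\infty(T)}\lesssim h_T^{-1}\|\bar{\b{R}}_T\|_{\b{L}^2(T)}$, and absorbing $h_T\|\b{f}-\bar{\b{f}}\|_{\b{L}^2(T)}$ into $Osc(\b{f},T)$ proves the first inequality.

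The jump bound follows at once: $\b{u}$ is single-valued across interior edges and vanishes on $\Gamma_D$, hence $\sjump{\b{u}_h}=\sjump{\b{u}_h-\b{u}}$ on $\cF_h^0$, which is controlled elementwise. For the flux jump on $e\in\cF_h^{int}$, extend $\b{J}_e:=\sjump{\b{\Xi}(\b{u}_h)}$ to $\omega_e$ and test with $\b{v}=b_e\widetilde{\b{J}}_e$, $b_e$ the edge bubble supported in $\omega_e$ (whose zero extension lies in $\b{\cV_0}$); writing $\int_e\b{J}_e\cdot\b{v}$ as $\sum_{T\subset\omega_e}\int_T(\b{\Xi}(\b{u}_h):\b{\epsilon}(\b{v})+\b{div}\b{\Xi}(\b{u}_h)\cdot\b{v})$, inserting \eqref{prop5}, integrating by parts off $\b{u}-\b{u}_h$, and using the element bound already proved on $\omega_e$ gives the third inequality. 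The Neumann estimate is the same argument on the single triangle adjacent to $e\in\cF_h^N$, with a face bubble not vanishing on $e$, the identity $\b{\Xi}(\b{u})\b{n}=\b{\pi}$ on $e$, and the extra term $Osc(\b{\pi},e)$.

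The contact residual on $e\in\cF_h^C$, with $T$ its adjacent triangle, is the delicate case. Writing $\b{\rho}_e:=\b{\Xi}(\b{u}_h)\b{n}_e+\b{\sigma}_h$ and testing with $\b{v}=b_e\widetilde{\b{\rho}}_e$, an integration by parts on $T$, the identity $\b{\sigma}=-\b{\Xi}(\b{u})\b{n}$ on $\Gamma_C$, and the defining relation of $\b{\sigma}_h$ (Lemma \ref{ksign}) reduce $\int_e\b{\rho}_e\cdot\b{v}$ to $\int_T\b{\Xi}(\b{u}-\b{u}_h):\b{\epsilon}(\b{v})+\int_T(\b{f}+\b{div}\b{\Xi}(\b{u}_h))\cdot\b{v}+\langle\b{\sigma}_h-\b{\sigma},\gamma_c(\b{v})\rangle_c$, together with DG penalty and consistency contributions carrying $\sjump{\b{u}_h}$ on the edges of $T$ in $\cF_h^0$, which are absorbed through the second inequality. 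The first two terms are treated exactly as above, the element residual being converted into $Osc(\b{f},T)$ by the first inequality. The main obstacle is the term $\langle\b{\sigma}_h-\b{\sigma},\gamma_c(\b{v})\rangle_c$: the quantity $\b{\rho}_e$ is measured in $\b{L}^\infty(e)$, whereas $\b{\sigma}-\b{\sigma}_h$ enters only through the weak norm $\|\cdot\|_{-2,\infty,\cdot}$ appearing in $Err$, so one must exploit the piecewise-constant structure $\b{\sigma}_h|_e\in[\mathbb{P}_0(e)]^2$ --- recovering $h_e\|\b{\rho}_e\|_{\b{L}^\infty(e)}$ by duality against an $h_e$-scaled test function manufactured from $b_e$ whose $\b{\mathcal{N}}$-norm on the local contact patch is estimated via the inverse inequalities --- while tracking powers of $h_e$ so that the factor $h_e$ in front is reproduced. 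Collecting the five bounds and recalling $Err_D=\|\b{u}-\b{u}_h\|_{\b{L}^\infty(D)}+\|\b{\sigma}-\b{\sigma}_h\|_{-2,\infty,D_1}$ completes the proof.
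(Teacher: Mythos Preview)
Your approach via localized bubble functions is exactly what the paper invokes: the paper gives no detailed proof of this theorem, stating only that the estimates ``can be proven using conventional bubble function techniques, similar to those described in \cite{KP:2021:Signorini}.'' Your outline is therefore considerably more detailed than the paper's own treatment, and the overall strategy is correct.

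One small correction in the contact-edge case: Lemma~\ref{ksign} applies only to discrete test functions $\b{v}_h\in\b{\cV_h}$, and the bubble $b_e\widetilde{\b{\rho}}_e$ has degree $3$ on $T$ (the quadratic edge bubble times a linear extension of $\b{\rho}_e=\b{\Xi}(\b{u}_h)\b{n}_e+\b{\sigma}_h$), so it does not lie in $\b{\cV_h}$ and the lemma cannot be invoked for it. The invocation is in fact unnecessary: direct integration by parts on $T$, together with $\b{\sigma}=-\b{\Xi}(\b{u})\b{n}$ on $\Gamma_C$, already yields
\[
\int_e\b{\rho}_e\cdot\b{v}=\int_T\b{\Xi}(\b{u}_h-\b{u}):\b{\epsilon}(\b{v})+\int_T(\b{f}+\b{div}\,\b{\Xi}(\b{u}_h))\cdot\b{v}+\langle\b{\sigma}_h-\b{\sigma},\gamma_c(\b{v})\rangle_c,
\]
with no DG penalty or consistency contributions, because $\b{v}$ is continuous and vanishes on $\partial T\setminus e$. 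The remainder of your argument --- bounding the duality pairing by $\|\b{\sigma}-\b{\sigma}_h\|_{-2,\infty}$ after rescaling the bubble test function and controlling its $\b{W}^{2,1}$-type norm via the inverse inequalities of Lemma~\ref{Lemama} --- is the correct line and matches the analogous step in \cite{KP:2021:Signorini,KP:2022:QuadLinfSignorini}.
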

\noindent
The following estimates can be proven using conventional bubble function techniques,  similar to those described in \cite{KP:2021:Signorini}.
\vspace{0.3 cm}
\par
\noindent
\begin{remark}
We remark here that although the efficiency of the estimators $\eta_6$ and $\eta_7$ has not been established theoretically,  it has been taken into account during the execution of the numerical experiments. 
\end{remark}
	\section{Numerical Experiments}\label{sec6}
	\noindent
The aim of this section is to demonstrate the numerical experiments that assess the performance of devised residual estimator $\cE_h$ defined in \eqref{total1}.  All the reported numerical results were carried in MATLAB\_R2020B. To verify the optimal behavior of the residual estimator $\cE_h$  at certain refinement level,  we first calculate the discrete solution $\b{u}_h$ and formulate  a posteriori error estimator $\cE_h$.   Using this information, we then compute the experimental order of convergence.  A key objective of a posteriori error estimation is to identify the elements that make significant contributions to the error and thus refine those triangles locally and repeat.  
\vspace{3 mm}
\par
\noindent
In the numerical simulation,  we start with an initial mesh $\mathcal{T}_{h,0}$ consisting of four congruent right angle triangles.  The adaptive mesh refinement strategy generates a sequence of meshes $\mathcal{T}_{h,k}, ~ k \in \mathbb{N}$ where at each iteration we consider the successive four modules:
\begin{align*}
\textbf{SOLVE} \longrightarrow \textbf{ESTIMATE} \longrightarrow \textbf{MARK}  \longrightarrow \textbf{REFINE} 
\end{align*}
In the module $\textbf{SOLVE}$ the discrete solution of variational inequality \eqref{eq:DVI} is computed using primal dual active set strategy \cite{hueber2005priori} on the mesh $\mathcal{T}_{h,k}$.  Followed by that,  in the module $\textbf{ESTIMATE}$,  the a posteriori error estimator $\cE_h$ is computed on each element of triangulation $\mathcal{T}_{h,k}$.  Further,  the module $\textbf{MARK}$ returns the set of marked elements which needs to be refined using maximum norm criterion \cite{verfurth1996review}.  Lastly in the module $\textbf{REFINE}$ we exploited newest vertex bisection algorithm \cite{Funken2011EfficientIO} to refine the marked triangles.

%
%

\begin{figure}[ht!] 
	\begin{center}
		\includegraphics[height=4cm,width=8cm]{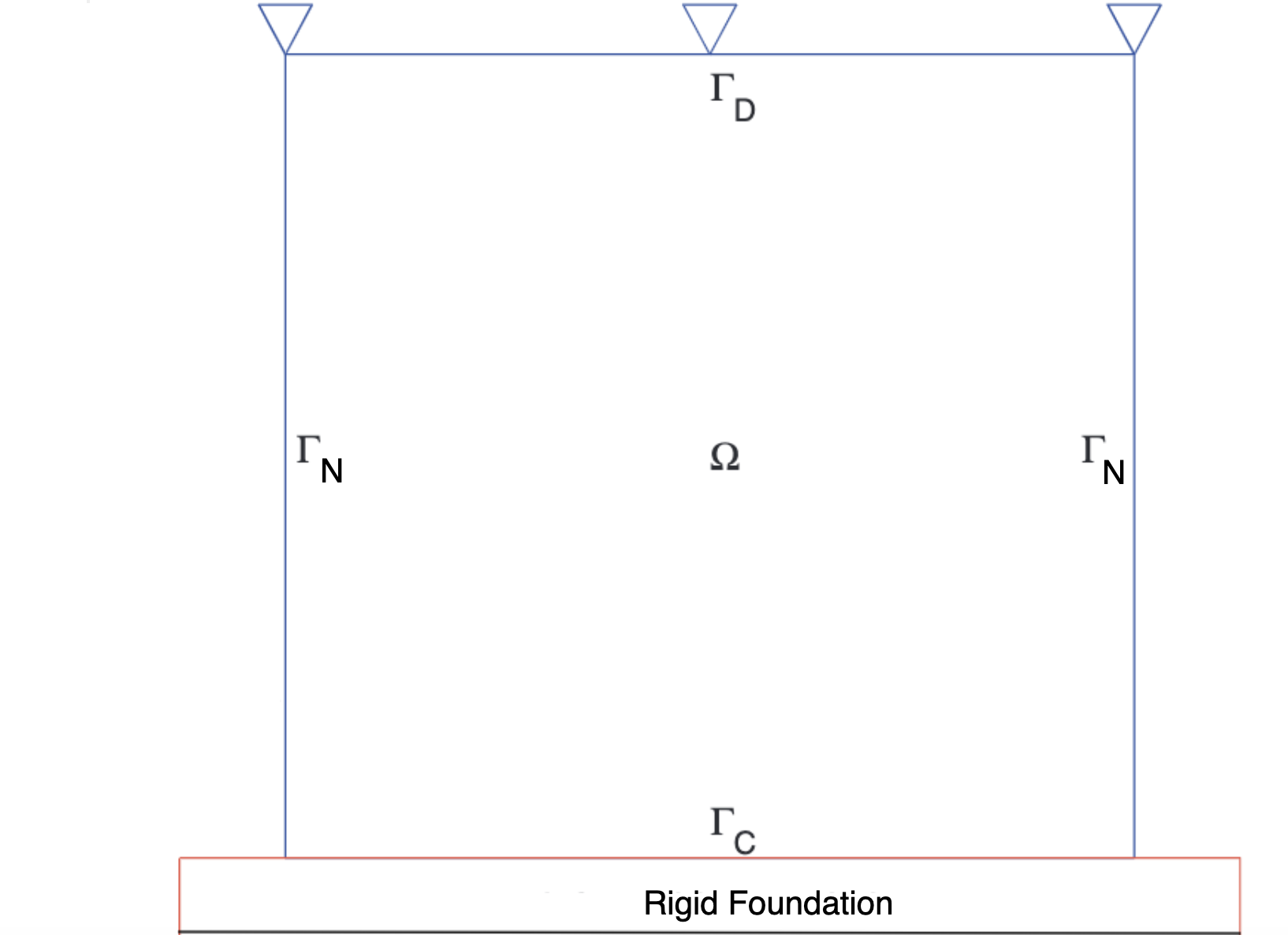}
		\caption{Physical setting of Model Problem 1.}
		\label{FIg1}
	\end{center}
\end{figure}

\begin{figure}
	\begin{subfigure}[b]{0.45\textwidth}
		\includegraphics[width=\linewidth]
		{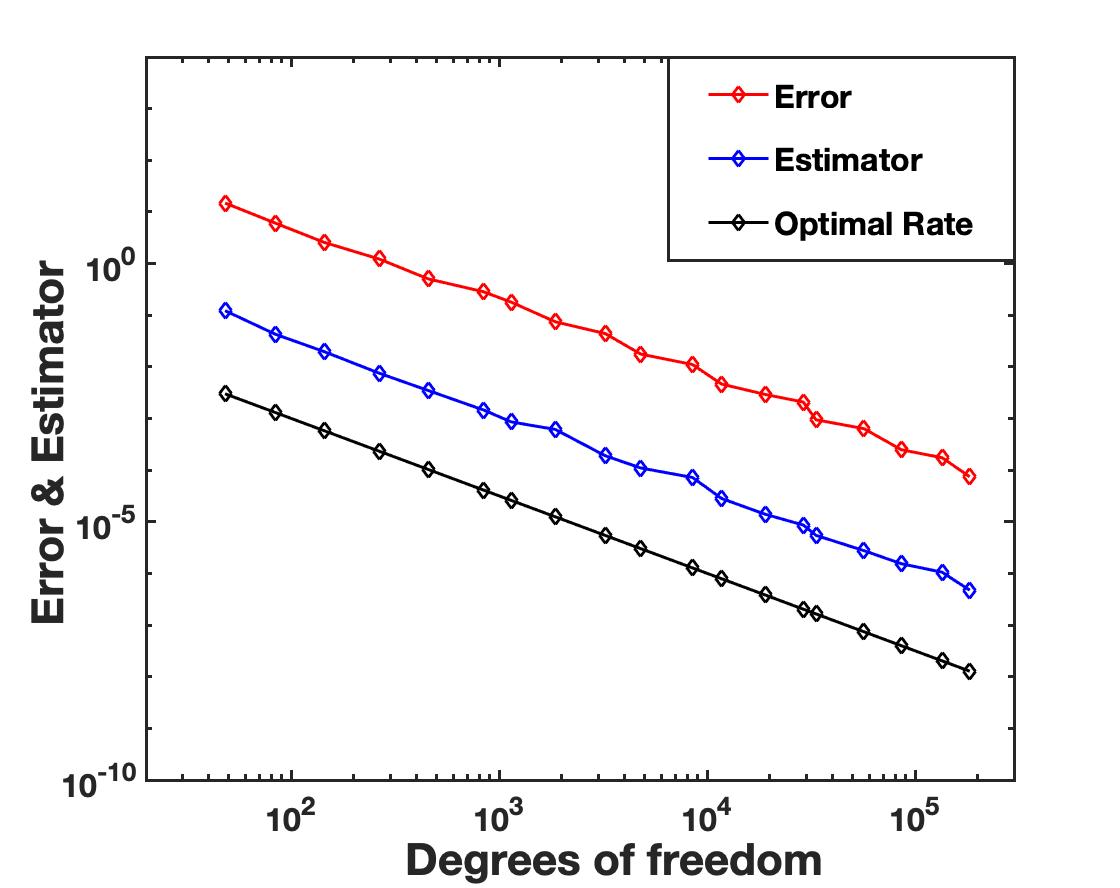}
		\caption{SIPG}
	\end{subfigure}
	\begin{subfigure}[b]{0.45\textwidth}
		\includegraphics[width=\linewidth]{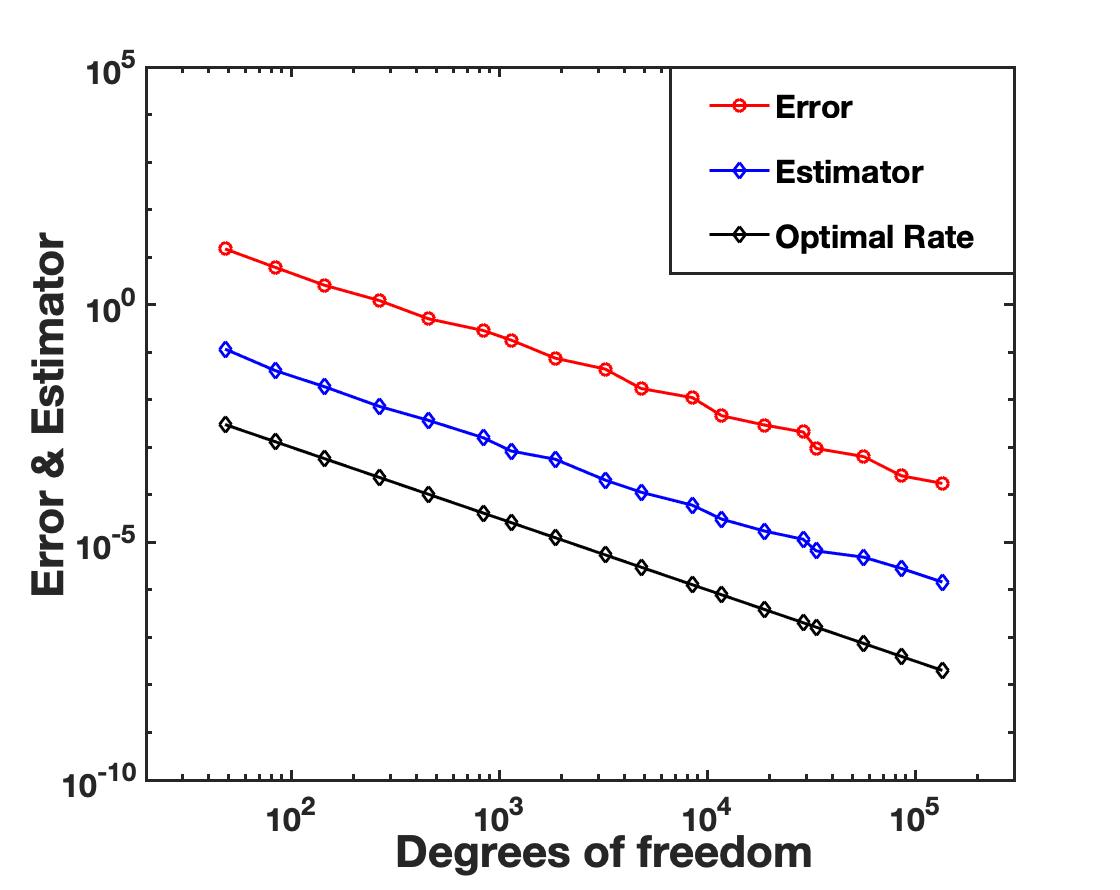}
		\caption{NIPG}
	\end{subfigure}
	\caption{Plot of convergence of Error and Estimator for SIPG and NIPG methods for  Model Problem 1. }\label{Example1_fullest}
\end{figure} 

\begin{figure}
	\begin{subfigure}[b]{0.45\textwidth}
		\includegraphics[width=\linewidth]
		{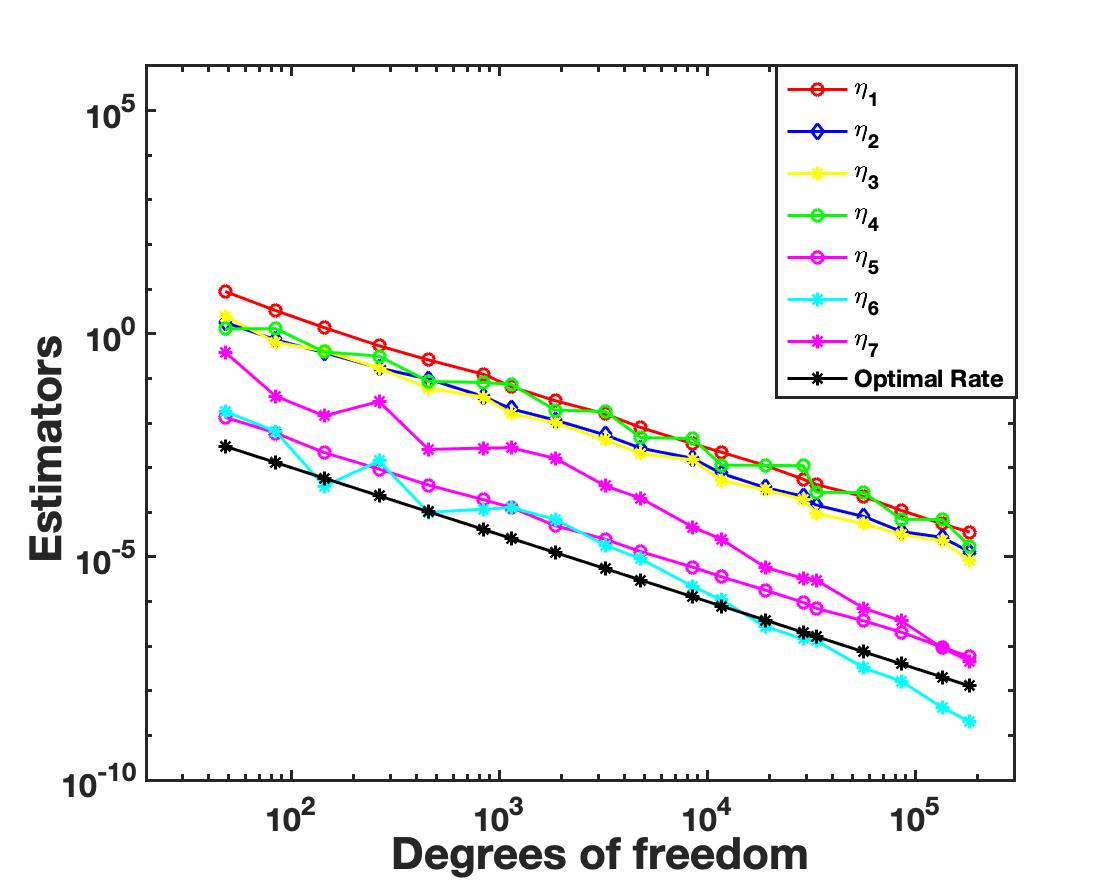}
		\caption{SIPG}
	\end{subfigure}
	\begin{subfigure}[b]{0.45\textwidth}
		\includegraphics[width=\linewidth]{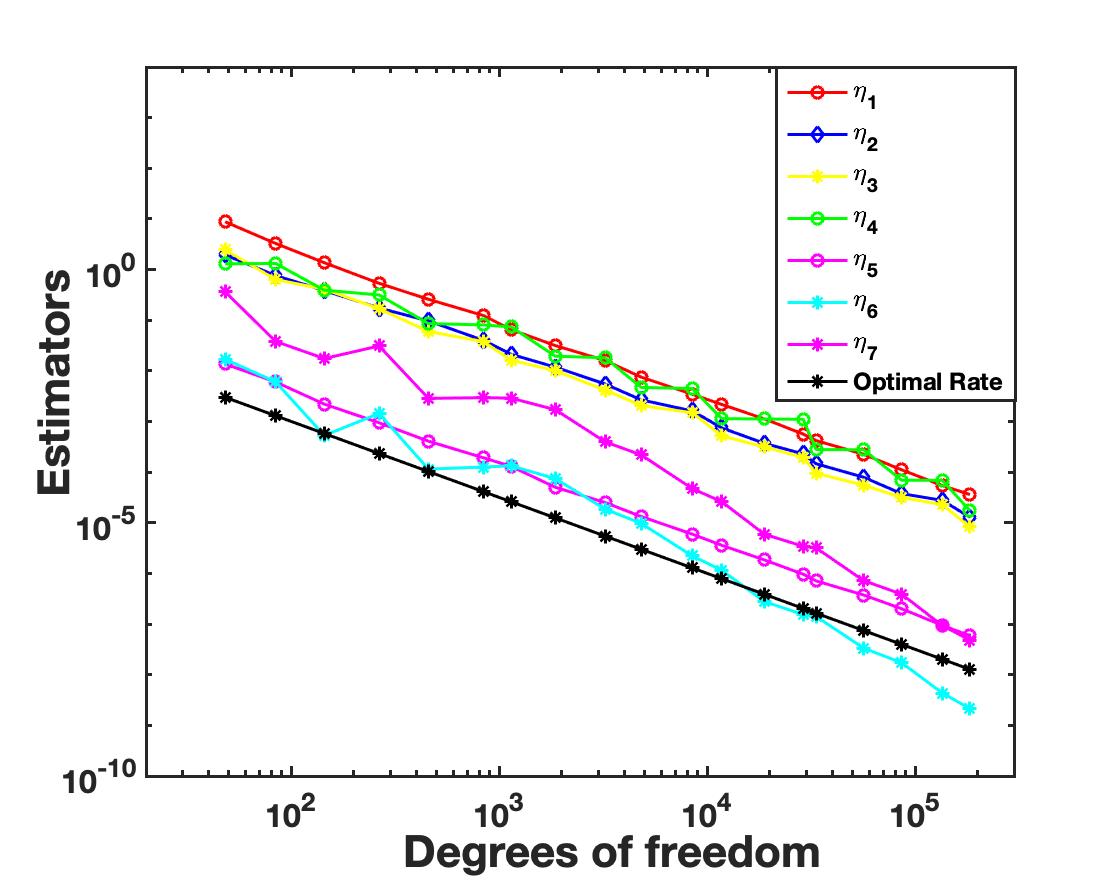}
		\caption{NIPG}
	\end{subfigure}
	\caption{Plot of residual estimators $\eta_i,  1\leq i \leq 7$ for SIPG and NIPG methods for Model Problem 1.}\label{Example1_indest}
\end{figure} 

\begin{figure}
	\begin{subfigure}[b]{0.45\textwidth}
		\includegraphics[width=\linewidth]
		{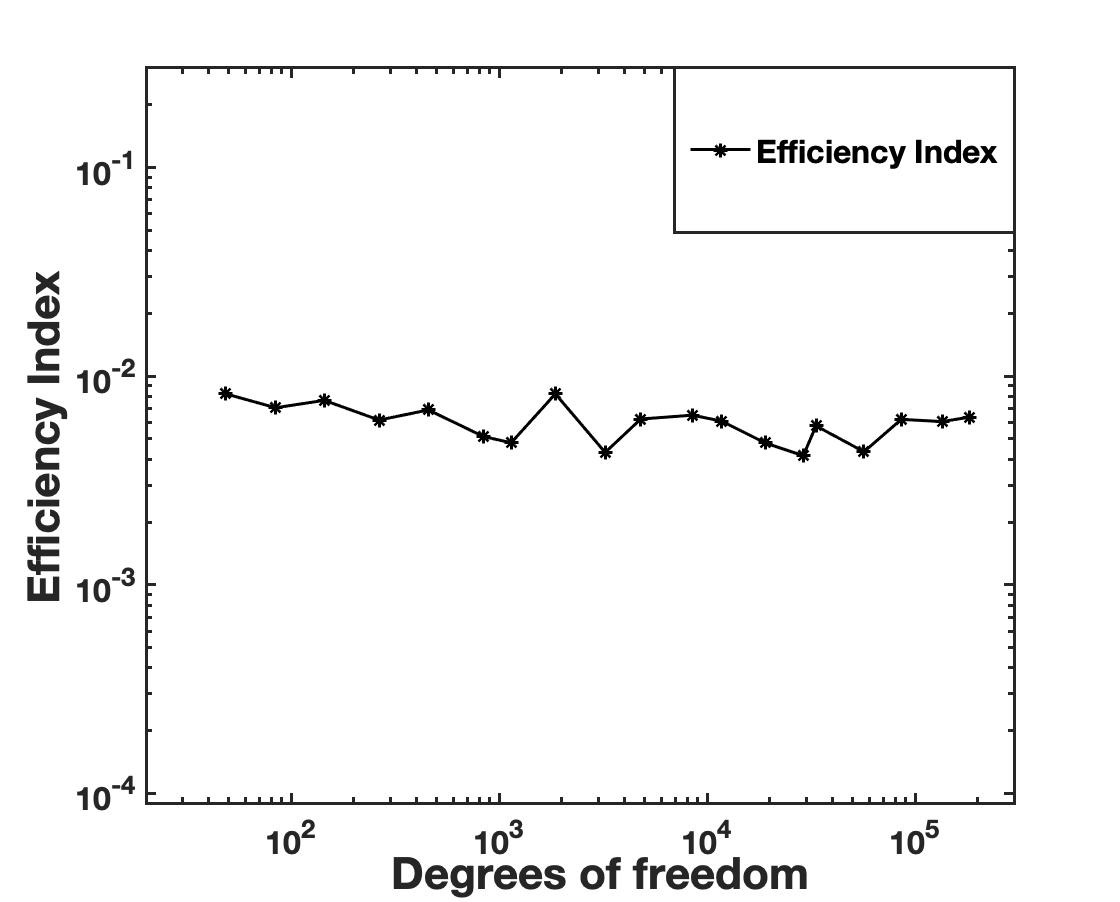}
		\caption{SIPG}
	\end{subfigure}
	\begin{subfigure}[b]{0.45\textwidth}
		\includegraphics[width=\linewidth]{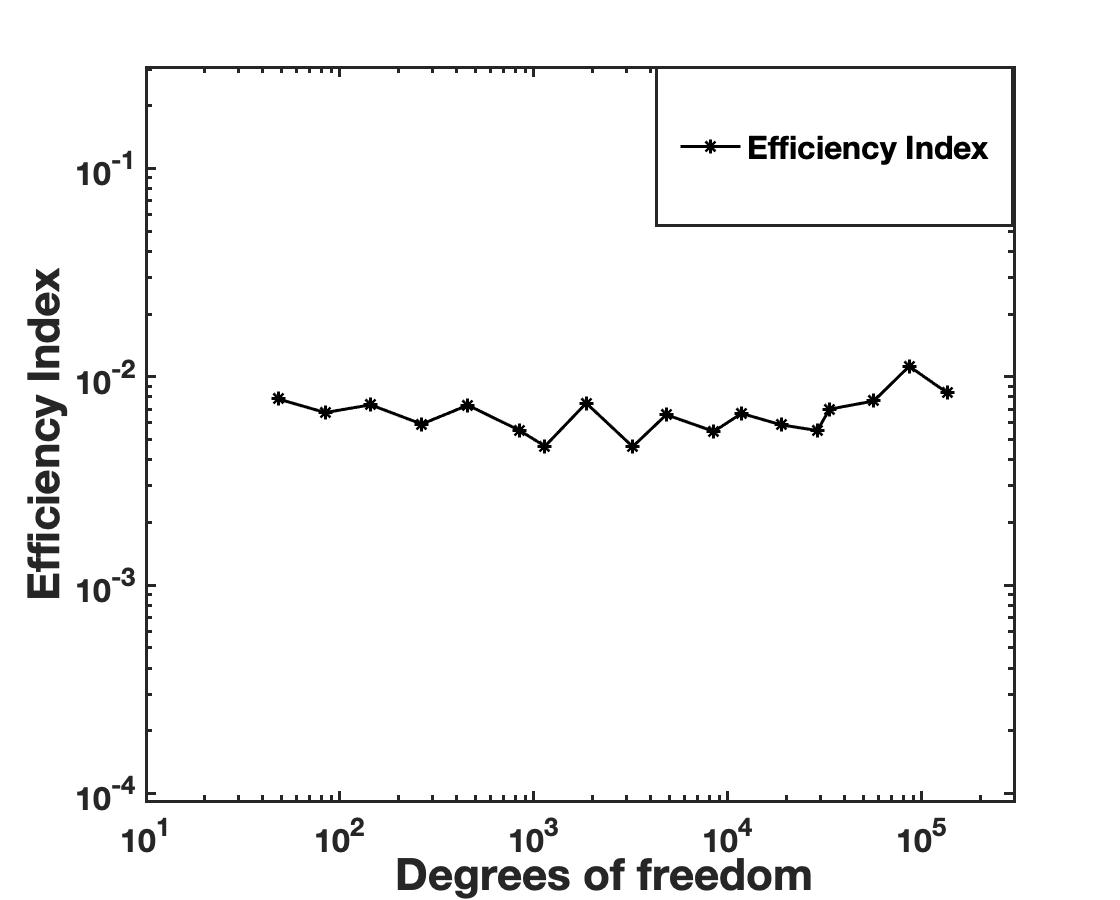}
		\caption{NIPG}
	\end{subfigure}
	\caption{Efficiency indices for SIPG and NIPG methods for Model Problem 1.}\label{Example1_Efficiency}
\end{figure} 
\par
\noindent

\vspace{0.3cm}
\par 
\noindent
\underline{\textbf{Model Problem 1}}\textbf{ \textit{(Contact with a rigid foundation)}}
\vspace{0.2 cm}
\\
Let us consider a unit square domain,  i.e. $\O=\{ (x,y): 0<x, y < 1\}$ with Dirichlet boundary as $y=1$.  Further, the Neumann boundary is assumed to be left and right side of square domain $\O$. The contact boundary is the bottom of square which is kept on the rigid foundation i.e. $\chi =0$.  We set rest of parameters as follows:
\begin{itemize}
\item The Lam$\acute{e}$ parameters $\mu$  and $\kappa$ are chosen to be 1.
\item The analytical solution to the problem is $\b{u} = (y^2(y-1),  (x-2)y(1-y)e^y).$
\item The source term $\b{f}$ and Neumann data $\b{\pi}$  
can be computed using the exact solution.
\item For the essential boundary condition,  we consider homogeneous Dirichlet boundary condition for $\b{u}$.
\end{itemize}
The pictorial representation of Model Problem 1 is described in Figure \ref{FIg1}. 
Further, we investigate the convergence property of residual estimator $\cE_h$.  Figure $\ref{Example1_fullest}$ depicts the optimal convergence of error and estimator for SIPG and NIPG methods for Model Problem 1.  The decay of each residual estimator $\eta_i, 1 \leq i \leq 7$ for SIPG and NIPG methods is shown in Figure \ref{Example1_indest}.  Clearly,  these observations are consistent with the theoretical findings.  In order to measure the quality of proposed error estimator,  we introduce the quantity \textit{Eff\_Index} which is defined as follows
\begin{align*}
\textit{Eff\_Index} := \dfrac {\textit{Estimator}}{\textit{Error}}.
\end{align*}
It is observed from Figure \ref {Example1_Efficiency} that the efficiency indices for both SIPG and NIPG methods are bounded above and below by generic constants,  thus ensuring the efficiency of residual error estimator $\cE_h$. 

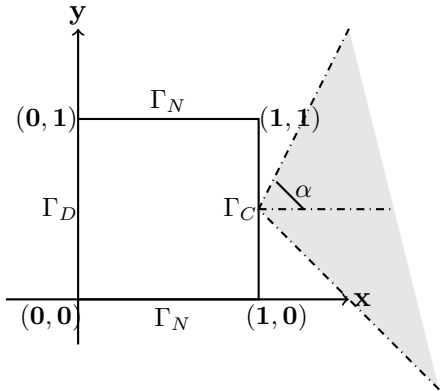
\begin{figure}[ht!]
\centering
\begin{tikzpicture}[scale=1.2]
\draw[thick] (0,0) --(2,0) --(2,2)--(0,2);
\draw[fill=black!10!white,dash dot,thick] (4,-1)--(2,1)--(3,3);
\draw[fill=black!10!white,dash dot,thick] (2,1)--(3.5,1);
\draw[thick,domain=293:360] plot({2+0.5*cos(\x)},{1.5-0.5*cos(\x)});
\draw[thick,->] (-0.8,0) -- (3,0);
\draw[thick,->] (0,-0.5) -- (0,3);
\node at (-0.3,-0.2) {\small$ \bf (0,0) $};
\node at (2.2,-0.2) {\small$ \bf (1,0) $};
\node at (2.35,2) {\small$ \bf (1,1) $};
\node at (-0.35,2.0) {\small$ \bf (0,1) $};
\node at (3.15,0) {$ \bf x $};
\node at (0,3.15) {$ \bf y $};
\node at (1,2.2) {\small$  \Gamma_N $};
\node at (-0.2,1) {\small$  \Gamma_D $};
\node at (1.8,1){\small$  \Gamma_C $};
\node at (1.05,-0.22) {\small$ \Gamma_N $};
\node at (2.5,1.2) {\small$ \alpha$};
\end{tikzpicture}
\caption{Physical setting of Model Problem 2.}
\label{fig3}
\end{figure}
\vspace{0.5 cm}
\par
\noindent
The next example corroborates the reliability and efficiency of a posteriori error estimator $\cE_h$ in the case of non-zero obstacle.  Therein,  the unit square comes in contact with a rigid wedge (see Figure \ref{fig3}).  Unlike the Model Problem 1,  we do not have exact solution in this case.
\vspace{0.2 cm}
\par
\noindent
\underline{\textbf{Model Problem 2}}\textbf{ \textit{(Contact with rigid wedge)}}
\\
\par
\noindent
In this example (adapted from \cite{walloth2019reliable}),  we simulate the deformation of unit elastic square $(0,1) \times (0,1)$  which come in contact with rigid wedge $\chi(y)=-0.2+0.5|y-0.5|$ inclined at an angle $\alpha= 63^{\circ}$ when displaced in $x$-direction.  We enforce the non-homogeneous boundary condition $\b{u}= (-0.1,0)$ on the Dirichlet boundary $\Gamma_D = \{0\} \times (0,1)$.   Rest,  the data for this problem is taken to be 
\begin{align*}
\Gamma_C &= \{1\} \times (0,1),~~
\Gamma_N =  ((0, 1)\times \{0\}) \cup ((0, 1)\times \{1\}),\\
 \b{f}&=(0,0)daN/mm^2,~~
\b{\pi}=(0, 0)daN/mm^2.
\end{align*}
Therein,  the values of Lam$\acute{e}$ parameters $\mu$ and $\kappa$ are calculated using the equations:
\begin{align*}
\mu = \frac{E}{2(1+\nu)}~~ \text{and}~~\kappa = \frac{E\nu}{(1+\nu)(1-2\nu)},
\end{align*}
where $E=500$ and $\nu=0.3$.
\begin{figure}
	\begin{subfigure}[b]{0.45\textwidth}
		\includegraphics[width=\linewidth]
		{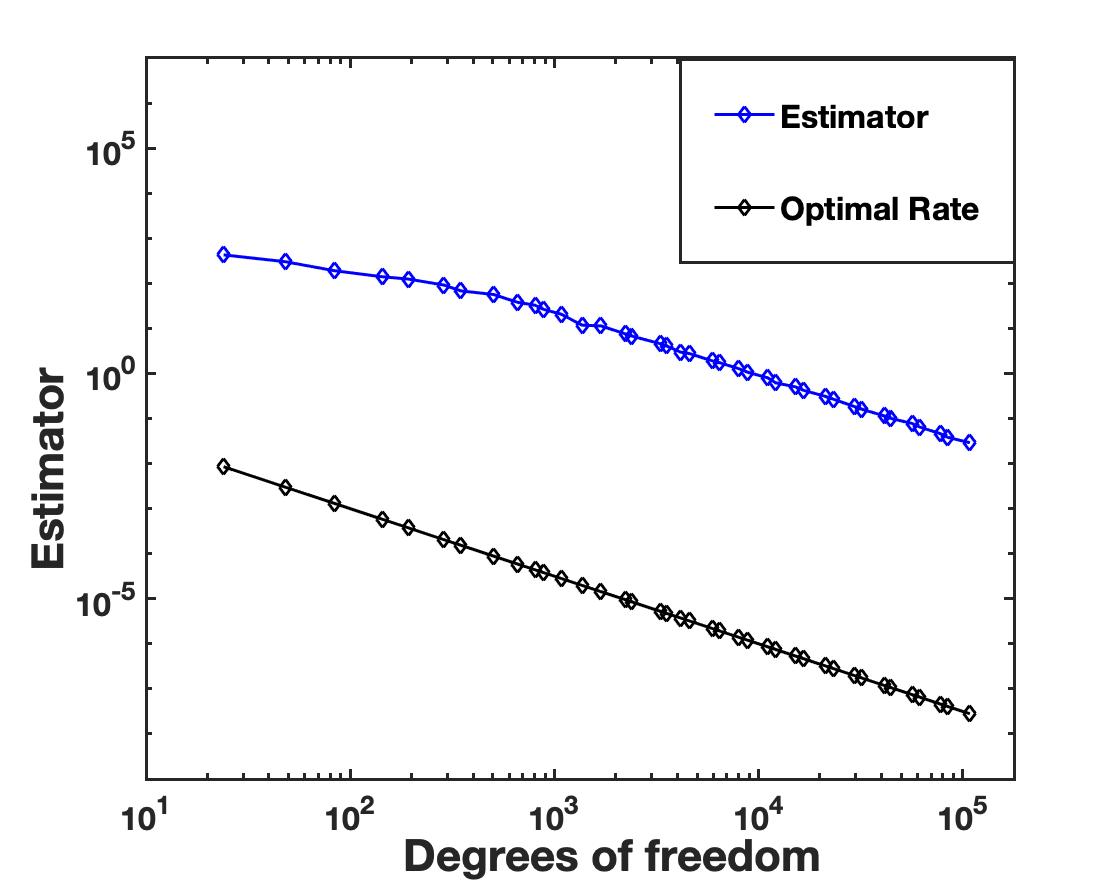}
		\caption{SIPG}
	\end{subfigure}
	\begin{subfigure}[b]{0.45\textwidth}
		\includegraphics[width=\linewidth]{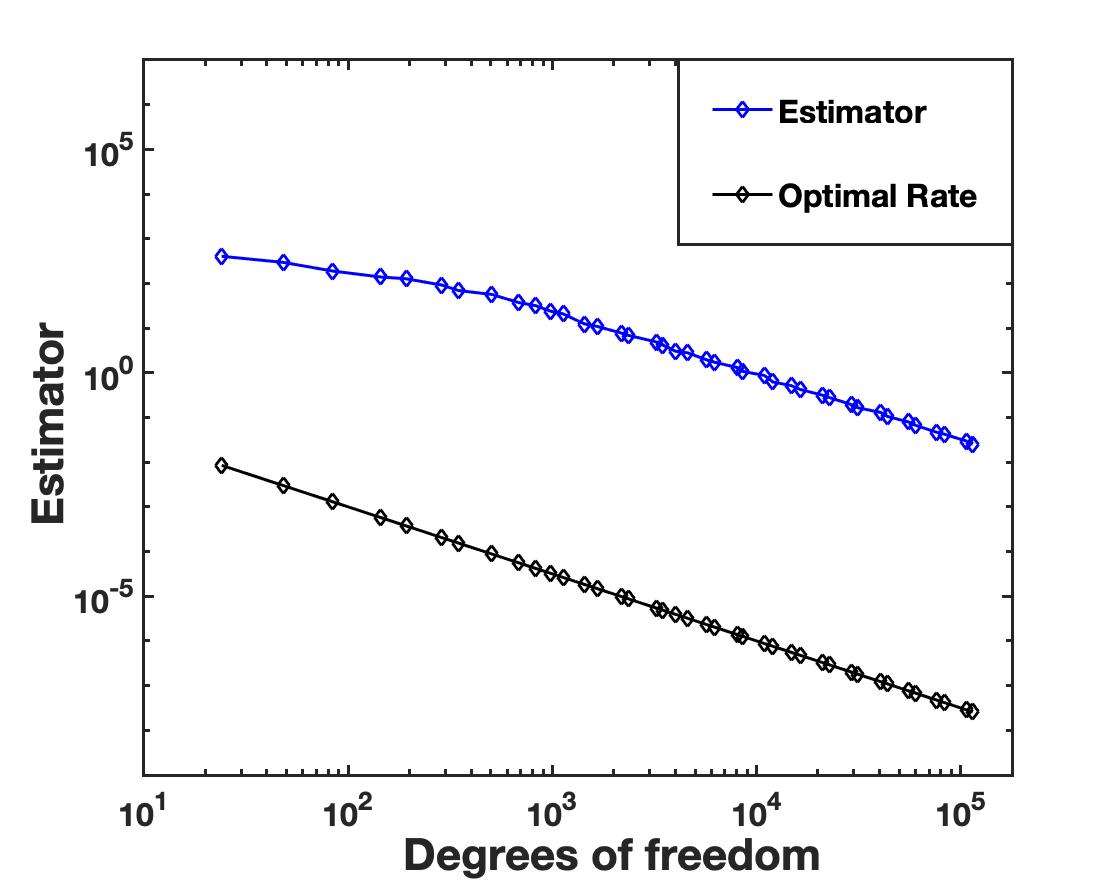}
		\caption{NIPG}
	\end{subfigure}
	\caption{Plot of convergence of Estimator for SIPG and NIPG methods for Model Problem 2.}\label{Example2_fullest}
\end{figure} 

\begin{figure}
	\begin{subfigure}[b]{0.45\textwidth}
		\includegraphics[width=\linewidth]
		{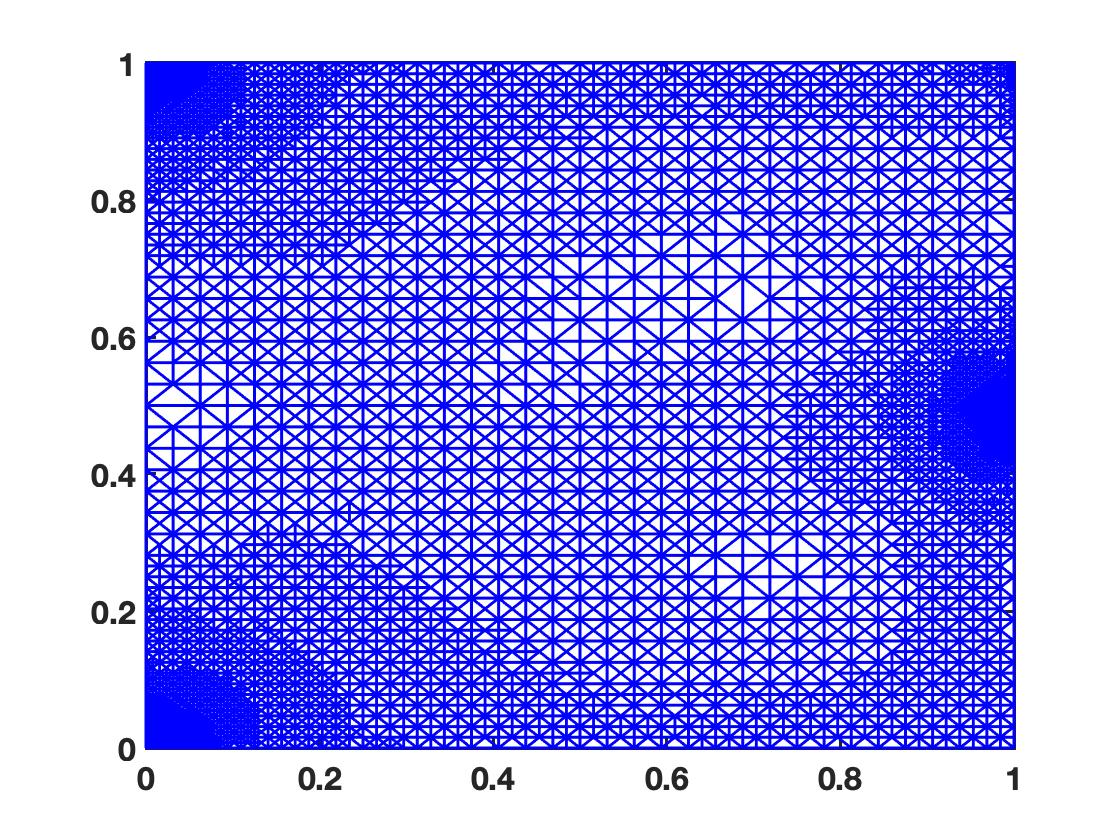}
		\caption{SIPG}
	\end{subfigure}
	\begin{subfigure}[b]{0.45\textwidth}
		\includegraphics[width=\linewidth]{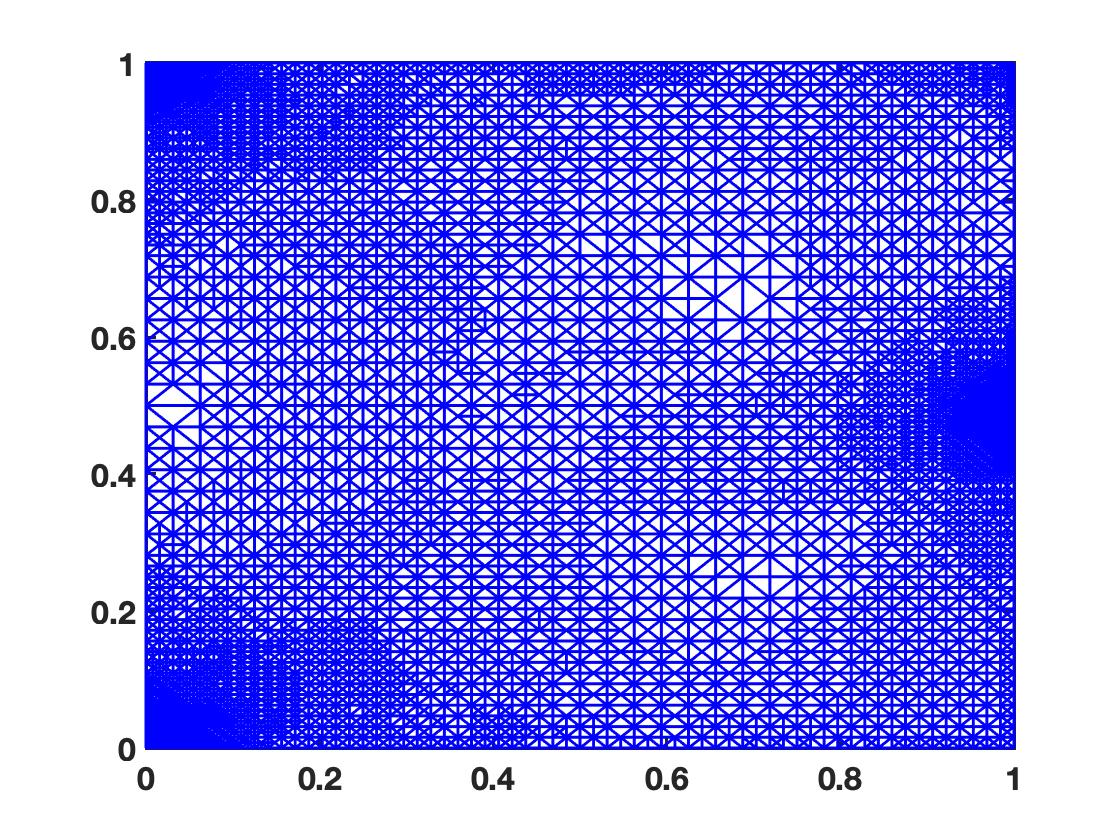}
		\caption{NIPG}
	\end{subfigure}
	\caption{Adaptive mesh refinement for SIPG and NIPG methods for Model Problem 2.}\label{Example2_mesh}
\end{figure} 
\vspace{0.2cm}
\par 
\noindent
In Figure \ref{Example2_fullest} we report the optimal convergence of estimator versus the degrees of freedom for SIPG and NIPG methods, respectively.  
Figure \ref{Example2_mesh} illustrates the adaptive mesh at an intermediate level generated by the proposed adaptive algorithm for both the methods.  It is observed that the refinement mainly concentrates around the region where tip of wedge intends on the contact boundary and the intersection of Dirichlet and Neumann boundary.  Thus,  the residual error estimator $\cE_h$ well captures the singular behavior of the solution.  The reduction of individual estimators $\eta_i ~( i = 1 : 7)$ with the increasing degrees of freedom for both the methods  is shown in Figure \ref{Example2_Indest}.

\begin{figure}
	\begin{subfigure}[b]{0.45\textwidth}
		\includegraphics[width=\linewidth]
		{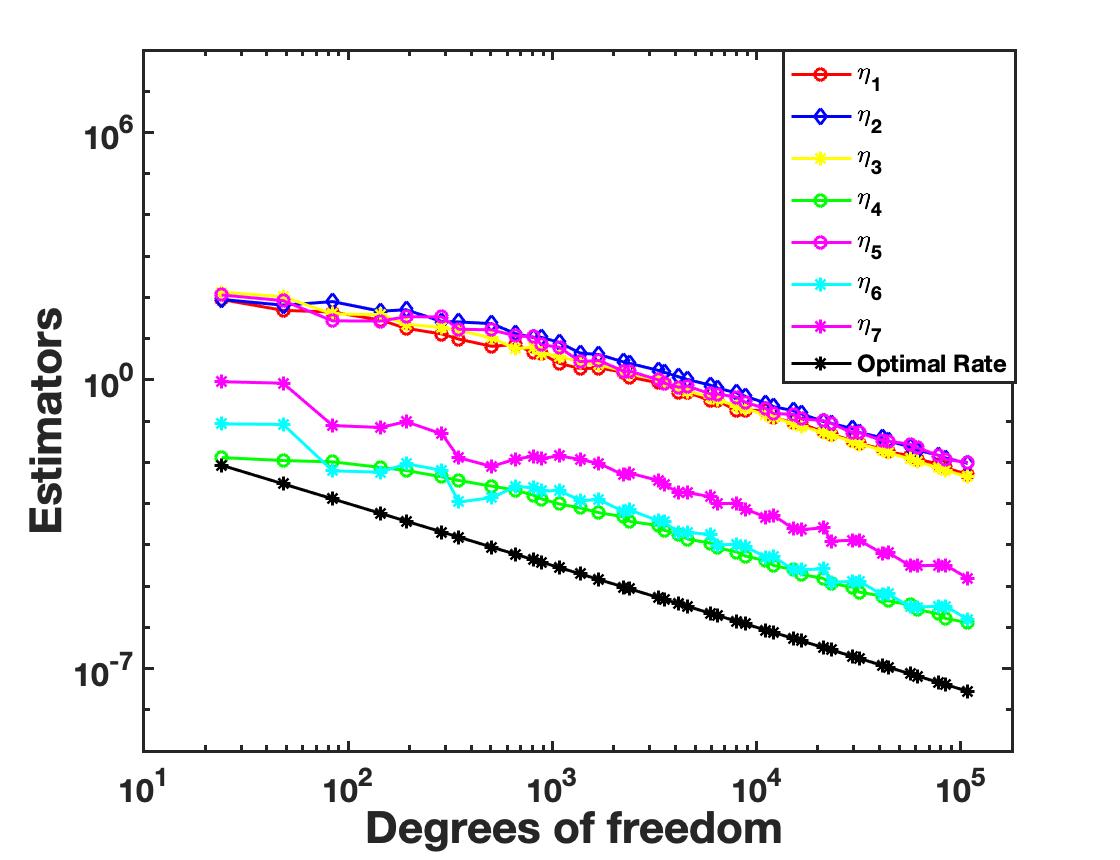}
		\caption{SIPG}
	\end{subfigure}
	\begin{subfigure}[b]{0.45\textwidth}
		\includegraphics[width=\linewidth]{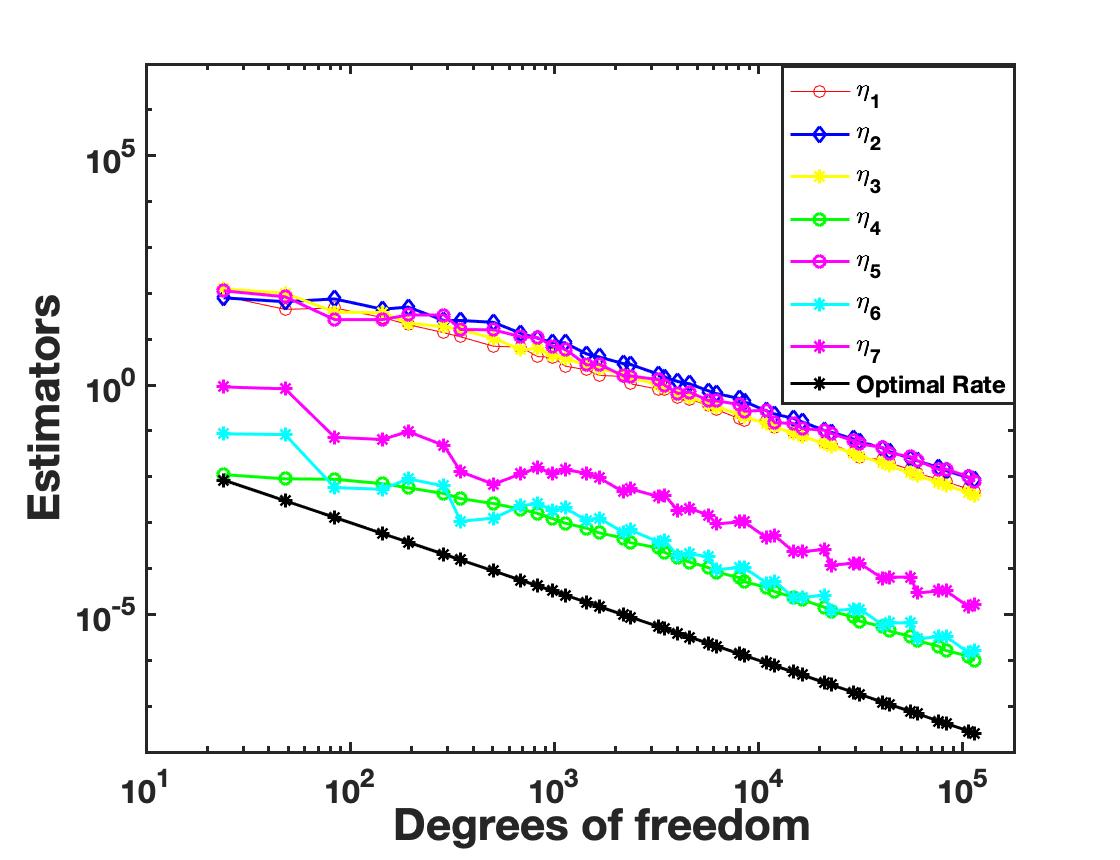}
		\caption{NIPG}
	\end{subfigure}
	\caption{Plot of individual estimators $\eta_i, 1 \leq i \leq 7$ of SIPG and NIPG methods for Model Problem 2.}\label{Example2_Indest}
\end{figure}

\vspace{0.5cm}
\par
\noindent

	\bibliographystyle{unsrt}
	\bibliography{rohi_4_3_2022}
\end{document}